\begin{document}
\newcommand{\mukj}{\mu_{k_j-1}}
\newcommand{\skhj}{s^{k_j}_{\fr}}
\newcommand{\fr}{\frac{1}{2}}
\newcommand{\vexact}{v^{\rm exact}}
\newcommand{\vout}{v^{\rm out}}
\newcommand{\var}{\sharp\,\mathrm{var}}
\newcommand{\bV}{\overline{V}}
\newcommand{\by}{\bar{y}}
\newcommand{\bz}{\bar{z}}
\newcommand{\K}{\mathcal{K}}            
\newcommand{\R}{\mathcal{R}}
\newcommand{\y}{y^{(i)}}
\newcommand{\square}{\Box}
\newcommand{\skh}{s^k_{\frac{1}{2}}}
\newcommand{\sk}{s^k_1}
\renewcommand{\l}{\ell}
\newcommand{\hw}{\hat{w}}
\newcommand{\hx}{\hat{x}}
\newcommand{\hy}{\hat{y}}
\newcommand{\hV}{\hat{V}}
\newcommand{\hP}{\hat{P}}
\newcommand{\hmu}{\hat{\mu}}
\newcommand{\hzeta}{\hat{\zeta}}
\newcommand{\dDelta}{\Delta}
\newcommand{\bmu}{\bar{\mu}}
\newcommand{\oDelta}{\Delta_{1}}
\newcommand{\jDelta}{\Delta_{j}}
\newcommand{\fDelta}{\dDelta_{\fr}}
\newcommand{\pkh}{P_{k+\frac{1}{2}}}
\newcommand{\wlim}{{\rm w}^\ast\mbox{-}\lim}
\newcommand{\wast}{\mbox{weak}^\ast}
\newcommand{\wlyast}{\mbox{weakly}^\ast}
\renewcommand{\epsilon}{\varepsilon}
\renewcommand{\t}{t}
\newcommand{\tbkpre}{\mu_{k-1}}
\newcommand{\ykdt}{dy^k(\tau)}
\newcommand{\yadt}{dy^{\ast}(\tau)}
\newcommand{\ydt}{dy(\tau)}
\newcommand{\ydtp}{dy_{+}(\tau)}
\newcommand{\supp}{{\rm supp}}
\newcommand{\tblumuk}{\mu_k}
\renewcommand{\phi}{\varphi}
\newcommand{\hxi}{\hat{\xi}}
\newcommand{\tact}{T_{act}(\Bar{x})}
\newcommand{\bx}{\Bar{x}}
\newcommand{\tepsk}{\tilde{\epsilon}_{k}}
\newcommand{\F}{\mathcal{F}}
\newcommand{\M}{\mathcal{M}}
\newcommand{\sastk}{s^k_{0}}
\newcommand{\temp}
{\left(1+2\theta\mu_{k-1}+\sqrt{1+4\theta\left(\mu_{k-1}-\theta\mu_{k-1}^{1+\alpha}\right)}\right)}
\newcommand{\bt}{\Bar{t}}
\newcommand{\co}{{\mathop{\mathrm{co}\,}}}
\newcommand{\bd}{{\mathop{\mathrm{bd}\,}}}
\newcommand{\cl}{{\mathop{\mathrm{cl}\,}}}
\newcommand{\cone}{{\mathop{\mathrm{cone}}}}
\newcommand{\inn}{{\mathop{\mathrm{int}\,}}}
\newcommand{\argmin}{{\mathop{\mathrm{argmin}}}}
\newcommand{\Min}{{\mathop{\mathrm{Minimize}}}}
\newcommand{\dist}{{\mathop{\mathrm{dist}\,}}}
\renewcommand{\refname}{References}
\newcommand{\Real}{\mathop{\mathrm{Re}}\,}
\newcommand{\Image}{\mathop{\mathrm{Im}}\,}
\newcommand{\PV}{\left(P\odot P\right)V}
\newcommand{\PF}{\left(P^{-\top}\odot P^{-\top}\right)F(x)}
\newcommand{\reps}{r_{\epsilon}}
\newcommand{\FiV}{{\left(F_i\bullet V\right)_{i=1}^n}}
\newcommand{\FiWk}{{\left(F_i\bullet W_k\right)_{i=1}^n}}
\newcommand{\FiVk}{{\left(F_i\bullet V_k\right)_{i=1}^n}}
\newcommand{\FiWast}{{\left(F_i\bullet W_{\ast}\right)_{i=1}^n}}
\newcommand{\FiVast}{{\left(F_i\bullet V_{\ast}\right)_{i=1}^n}}
\newcommand{\svecFi}{{\left({\rm svec}(F_i) \right)_{i=1}^n}}
 \newcommand{\iteout}{{\rm ite}_{\rm out}}
\def\ru[#1][#2]{{#1}^{#2}}
\def\rl[#1][#2]{{#1}_{#2}}
\newcommand{\ix}{x_{\rm out}}
\newcommand{\iU}{U_{\rm out}}
\newcommand{\iV}{V_{\rm out}}
 \newcommand{\iyeq}{z_{\rm out}}
\newcommand{\iy}{y_{i,{\rm out}}}
\newcommand{\tsocp}{t_{\rm socp}}
\newcommand{\tadd}{t_{\rm add}}
\newcommand{\iteopt}{{\rm ite}_{\rm opt}}
\newcommand{\tred}{\textcolor{red}}
\newcommand{\tblue}{\textcolor{blue}}
\newcommand{\trFV}{\begin{pmatrix}F_1\bullet V\\ \vdots\\ F_n\bullet V\end{pmatrix}}
\newcommand{\trFW}{\begin{pmatrix}F_1\bullet V\\\vdots\\ F_n\bullet V\end{pmatrix}}
\newcommand{\trFWast}{\begin{pmatrix}DF_1\bullet W_{\ast}\\ \vdots\\ DF_n\bullet W_{\ast}\end{pmatrix}}
\newcommand{\A}{\mathcal{A}}
\newcommand{\B}{\mathcal{B}}
\newcommand{\C}{\mathcal{C}}
\newcommand{\dx}{\Delta x}
\newcommand{\dy}{\Delta y}
\newcommand{\dz}{\Delta z}
\newcommand{\dV}{\Delta V}
\newcommand{\dF}{\Delta F}
\newcommand{\dw}{\Delta w}
\newcommand{\dhw}{\Delta\hat{W}}
\newcommand{\dhx}{\Delta\hat{x}}
\newcommand{\dhy}{\Delta\hat{y}}
\newcommand{\dhz}{\Delta\hat{z}}
\newcommand{\dhV}{\Delta\hat{V}}
\newcommand{\svec}{\mathop{\rm svec}}
\newcommand{\Phifb}{\Phi_{\rm FB}}
\newcommand{\Rfb}{R_{\rm FB}}
\newcommand{\yt}[1]{y_{\tau_{#1}}}
\newcommand{\DF}[2]{DF_{x_{#2}}}
\newcommand{\bxopt}{x^{\ast}}
\newcommand{\Wsol}{W_{\rm sol}}
\newcommand{\buxast}{\bar{U}{(x^{\ast})}}
\newcommand{\atau}{\tau^{\ast}}
\newcommand{\pa}{p_{\ast}}
\newcommand{\mpw}{\mathcal{W}^{\prime}}
\newcommand{\mukds}{\mu^{\delta_2}_k}
\makeatletter
\@addtoreset{equation}{section}
\def\theequation{\thesection.\arabic{equation}}
\makeatother
\newtheorem{lemmaA}{Lemma~A}{\bf}{\it}
\title{
Primal-dual path following method for 
nonlinear semi-infinite programs with semi-definite constraints
\thanks{
The work was supported by JSPS KAKENHI Grant Number [15K15943].
}
}


\author{Takayuki Okuno\and
        Masao Fukushima
}


\institute{T. Okuno \at
RIKEN, The Center for Advanced Intelligence Project (AIP),
Nihonbashi 1-chome Mitsui Building, 15th floor,1-4-1 Nihonbashi, Chuo-ku, Tokyo 103-0027, Japan              
 \\              \email{takayuki.okuno.ks@riken.jp}           
           \and
           M. Fukushima \at
              Nanzan University, Faculty of Science and Engineering, 
18 Yamazato-cho, Showa-ku, Nagoya 466-8673, Japan
\email{fuku@nanzan-u.ac.jp}
}

\date{Received: date / Accepted: date}

\maketitle

\begin{abstract}
In this paper, we propose {two} algorithms for nonlinear semi-infinite semi-definite programs with infinitely many convex inequality constraints, called SISDP for short. 
A straightforward approach to the SISDP is to use classical methods for semi-infinite programs such as discretization and exchange methods and solve a sequence of (nonlinear) semi-definite programs (SDPs).
However, it is often too demanding to find exact solutions of SDPs.  

Our first approach does not rely on solving SDPs but on approximately following {a path leading to a solution}, which is formed on the intersection of the semi-infinite {region} and the interior of the semi-definite {region}.
We show weak* convergence of this method to a Karush-Kuhn-Tucker point 
of the SISDP
under some mild assumptions and further provide with sufficient conditions for strong convergence. 
Moreover, 
as the second method, to achieve fast local convergence, we 
integrate a two-step sequential quadratic programming method 
{equipped} with Monteiro-Zhang scaling technique into the first method. 
We particularly prove two-step superlinear convergence of the second
method using Alizadeh-Hareberly-Overton-like, Nesterov-Todd, and Helmberg-Rendle-Vanderbei-Wolkowicz/Kojima-Shindoh-Hara/Monteiro scaling directions. 
Finally, we conduct some numerical experiments to demonstrate the efficiency of the proposed method through comparison with a discretization method that solves SDPs
obtained by finite relaxation of the SISDP.
\keywords{semi-infinite program
\and nonlinear semi-definite program
\and path-following method
\and superlinear convergence
\and global convergence
}
\subclass{90C22\and 90C26\and 90C34}
\end{abstract}

\section{Introduction}
In this paper, we consider the following nonlinear semi-infinite 
semi-definite program with an infinite number of convex inequality constraints and one linear matrix inequality constraint, SISDP for short:
{\begin{align}
 \begin{array}{ll}
  \displaystyle{\mathop{\rm Minimize}}  &f(x)
  \vspace{0.5em}\\
  {\rm subject~to} &g(x,\tau)\le 0\ \mbox{ for all } {\tau}\in T, \\
                   &{F}(x)\in {S^m_{+}},\\
 \end{array}\label{lsisdp}
\end{align}
where 
$f:\R^n\to \R$ is a continuously differentiable function and $T$ is a compact metric space.
In addition, $g:\R^n\times T\to \R$ is a continuous function, and $g(\cdot,\tau)$ is supposed to be convex and continuously differentiable. 
Moreover, $S^m$ and $S^m_{++} (S^m_{+})$ denote the sets of 
$m\times m$ symmetric matrices 
and symmetric positive (semi-)definite matrices, respectively, and 
$F(\cdot):\R^n\to S^{m}$ is an affine function, i.e.,
$$
F(x):=F_0+\sum_{i=1}^nx_iF_i
$$
with $F_i\in S^m$ for $i=0,1,\ldots,n$ and $x=(x_1,x_2,\ldots,x_n)^{\top}$.
We assume that the SISDP\,\eqref{lsisdp} has a nonempty solution set.
We may let the SISDP\,\eqref{lsisdp} include linear equality constraints, to which the algorithms and theories 
given in the subsequent sections can be extended straightforwardly. But, for simplicity of expression, we omit them.

When $T$ comprises a finite number of elements, the SISDP reduces to a nonlinear semi-definite program (nonlinear SDP or  NSDP).
Particularly when all the functions are affine with respect to $x$, it further reduces to the linear SDP (LSDP).
As is known broadly, studies on the LSDP have been crucially promoted in the aspects of theory, algorithms, and applications\,\cite{wolkowicz2012handbook}.
Compared with the LSDP, studies on the NSDP are still scarce, although
important applications are found in various areas\,\cite{freund2007nonlinear,konno2003cutting,leibfritz2009successive}.
Shapiro\,\cite{shapiro1997first} expanded an elaborate theory on the first and second order optimality conditions of the NSDP.
See \cite{BonSp} for a comprehensive description of the optimality conditions and duality theory of the NSDP.
Yamashita et al.\,\cite{yabe} proposed a primal-dual interior point-type method using the Monteiro-Zhang (MZ) directions family and showed its global convergence property. They further made local convergence analysis in \cite{yamashita2012local}.
The SQP method {for nonliear programs} was also {extended} to the NSDP by Freund et al.\,\cite{freund2007nonlinear}.
See the survey article\,\cite{yamashita2015survey} for {more} algorithms
designed to solve the NSDP. 

In the absence of the semi-definite constraint, \eqref{lsisdp} becomes a nonlinear semi-infinite program (SIP) with an infinite number of convex constraints. 
For solving nonlinear SIPs, many researchers proposed various kinds of algorithms, for example
discretization based methods\,\cite{reemtsen1991discretization,still2001discretization}, local reduction based methods\,\cite{gramlich1995local,pereira2011interior,pereira2009reduction,Tanaka}, Newton-type methods\,\cite{li2004smoothing,qi2003semismooth}, smoothing projection methods\,\cite{xu2014solving}, convexification based methods\,\cite{floudas2007adaptive,shiu2012relaxed,stein2012adaptive,wang2015feasible}, and so on.  
For an overview of the SIP, see \cite{sip-recent,sip2,Reem} and the references therein.

Most closely related to the SISDP\,\eqref{lsisdp} are SIPs involving (possibly infinitely many) conic constraints. 
Li et al.\,\cite{li2004solution} considered a linear SIP with semi-definite constraints and proposed a discretization based method.
Subsequently, Li et al.\,\cite{li2006relaxed} tackled the same problem and developed a relaxed cutting plane method.                                                              
Hayashi and Wu \cite{hayashi4} focused on a linear SIP involving second-order cone (SOC) constraints and proposed an exchange-type method. It is worth mentioning that the SISDP\,\eqref{lsisdp} can be viewed as a generalization of those problems. 
More recently, Okuno et al.\,\cite{okuno2012regularized} considered a convex SIP with an infinite number of conic constraints, and proposed an exchange-type method combined with Tikhonov's regularization technique. Okuno and Fukushima\,\cite{okuno2014local} restricted themselves to a nonlinear SIP with infinitely many SOC constraints, and constructed a quadratically convergent sequential quadratic programming (SQP)-type method based on the local reduction method. One of common features of the algorithms mentioned above is to solve a sequence of certain conic constrained problems.

We can find some important applications of the SISDP.
For example, 
semi-infinite eigenvalue optimization problems\,\cite{li2004solution}, 
finite impulse response (FIR) filter design problems\,\cite{spwu1996}, and
robust envelop-constrained filter design with orthonormal bases\,\cite{li2007robust} can be formulated as the SISDP whose functions are all affine with respect to $x$.
{Moreover,} robust beam forming problems\,\cite{yu2008novel} can be formulated as the SISDP with infinitely many nonlinear {inequality constraints}.
However, to the best of our knowledge, there is no existing work that deals with the SISDP\,\eqref{lsisdp} itself.

{In this paper, we propose two algorithms tailored to the SISDP.
In the first method, we generate a sequence approaching a Karush-Kuhn-Tucker (KKT) point of the SISDP
by approximately following a central path formed by barrier KKT (BKKT) points of the SISDP. 
The BKKT points, whose definition will be provided in Section\,\ref{sec:3}, can be computed efficiently using the interior-point SQP-type method proposed in the authors' recent work\,\cite{okuno2018sc}. 
Although it is possible to design a convergent algorithm that solves NSDPs iteratively like the existing algorithms mentioned in the previous paragraph, it is often too demanding to get an accurate solution of an NSDP at each iteration.
In contrast, the proposed path-following algorithm will only require solving quadratic programs if it is combined with the interior point SQP-type method. 
In the second method, to accelerate the local convergence speed, we {further}
integrate a two-step {SQP} method into the first method.
{Specifically, we derive the scaled barrier KKT system of the SISDP by means of the local reduction method\,\cite{gramlich1995local,pereira2011interior,pereira2009reduction,Tanaka}
and the Monteiro-Zhang scaling technique\,\cite[Chapter~10]{wolkowicz2012handbook}.
We then perform a two-step SQP method to generate iteration points, while decreasing a barrier parameter to zero superlinearly.
In each step of the two-step SQP, 
to produce a search direction, 
we solve
a mixed linear complementarity system
approximating the aforementioned scaled barrier KKT system, which can be solved via a certain quadratic program.
We then adjust a step-size along the obtained search direction so that the next iteration point remains to lie in the interior of the semi-definite region.
We will show that, under some regularity conditions at a KKT point of the SISDP, a step-size of the unity is eventually adopted and two-step superlinear convergence is achieved. 

The proposed methods may be viewed as an extension of the primal-dual interior point method\,\cite{yabe} for the NSDPs. 
Nonetheless, the theoretical and algorithmic extensions are not straightforward because of the {presence} of infinitely many inequality constraints.
Furthermore, the results {obtained in the paper} have novelty not only in the field of the SIP but also the NSDP.

The paper is organized as follows: 
In Section~\ref{sec:3}, we propose a primal-dual path-following method for the SISDP.
{We prove that {any} $\wast$-accumulation point of the generated sequence is a KKT point of the SISDP under some mild assumptions. We also give a sufficient condition for strong convergence of the sequence.} 
In Section~\ref{sec:4}, we further combine the local-reduction based SQP method with the prototype method and prove that it converges to a KKT point of the SISDP two-step superlinearly.
In Section~\ref{sec:5}, we conduct some numerical experiments to exhibit the efficiency of the proposed method.
Finally, we conclude this paper with some remarks.
\subsection*{Notations}
Throughout this paper, we use the following notations: 
The identity matrix is denoted by $I$.
For any $P\in \R^{m\times m}$, ${\rm Tr}(P)$ denotes the trace of $P$.
For any symmetric matrices $X,Y\in S^m$, we denote the Jordan product of $X$ and $Y$ by $X\circ Y:=(XY+YX)/2$ and the inner product of $X$ and $Y$ by 
$X\bullet Y={\rm Tr}(XY)$. 
Also, we denote the Frobenius norm $\|X\|_F:=\sqrt{X\bullet X}$ 
and 
\begin{align*}
{\rm svec}(X):=&
(X_{11},\sqrt{2}X_{21},\ldots,\sqrt{2}X_{m1},X_{22},\\
&\hspace{3em}\sqrt{2}X_{32},\ldots,\sqrt{2}X_{m2},X_{33},\ldots,X_{mm})^{\top}\in \R^{\frac{m(m+1)}{2}}
\end{align*}
for $X\in S^m$.
We write $\FiV:=\left(F_1\bullet V,F_2\bullet V,\ldots,F_n\bullet V\right)^{\top}\in \R^n$ 
for $V,F_1,F_2,\ldots,F_n\in S^m$.
For any $X\in S^m$, we define the linear operator $\mathcal{L}_X:S^m\to S^m$ by $\mathcal{L}_X(Z):=X\circ Z$.
We also denote $({\zeta})_+:=\max({\zeta},0)$ for any $\zeta\in \R$.
For sequences $\{y^k\}$ and $\{z^k\}$, if $\|y^k\|\le M\|z^k\|$
for any $k$ with some $M>0$,
we write $\|y^k\|=O(\|z^k\|)$. If $M_1\|z^k\|\le \|y^k\|\le M_2\|z^k\|$
for any $k$ with some $M_1,M_2>0$,  we represent $\|y^k\|=\Theta(\|z^k\|)$.
Moreover, if there exists a sequence $\{\alpha_k\}$ with $\lim_{k\to\infty}\alpha_k=0$ and $\|y^k\|\le \alpha_k\|z^k\|$ for any $k$,
we write $\|y^k\|=o(\|z^k\|)$. 
Finally, we let $\perp$ denote the perpendicularity.
\subsection*{Terminologies from functional analysis}
Let us review some terminologies from functional analysis briefly.
For more details, refer to the basic material \cite[Section~2]{BonSp} or suitable textbooks of functional analysis.

Let $\mathcal{C}(T)$ be the set of real-valued continuous functions defined on $T$ endowed with the supremum norm $\|h\|:=\max_{\tau\in T}|h(\tau)|$. 
Let $\mathcal{M}(T)$ be the dual space of $\mathcal{C}(T)$, which can be identified with the space of (finite signed) regular Borel measures with the Borel sigma algebra $\mathcal{B}$ on $T$ equipped with the total variation norm, i.e.,
$\|y\|:=\sup_{A\in \mathcal{B}}y(A)-\inf_{A\in \mathcal{B}}y(A)$ for $y\in \mathcal{M}(T)$. 
Denote by $\mathcal{M}_+(T)$ the set of all the nonnegative Borel measures of $\mathcal{M}(T)$.  
Especially if $y\in \mathcal{M}_+(T)$, $\|y\|=y(T)$ since $\inf_{A\in \mathcal{B}}y(A)=y(\emptyset)=0$
and $\sup_{A\in \mathcal{B}}y(A)=y(T)$.
We say that $y\in \M(T)$ is a finite discrete measure if there exist a finite number of indices $\tau_1,\tau_2,\ldots,\tau_q\in T$ and scalars $\alpha_1,\alpha_2,\ldots,\alpha_q\in \R$ such that $y(A)=\sum_{i=1}^q\alpha_i\delta_A(\tau_i)$  
for any Borel set $A\in \mathcal{B}$, where $\delta_S:T\to \R$ is the indicator function satisfying 
$\delta_S(\tau)=1$ if $\tau\in S$ and $\delta_S(\tau)=0$ otherwise.

Let 
$\langle\cdot,\cdot\rangle: \mathcal{M}(T)\times \mathcal{C}(T)\to \R$
be the bilinear form defined by
$
\langle y,h\rangle:=\int_{T}h(\tau)\ydt
$ 
for $y\in \mathcal{M}(T)$ and $h\in \mathcal{C}(T)$.
We then endow $\mathcal{M}(T)$ with the $\wast$-topology,
which is the minimum topology such that any seminorm
$p_{\A}$ on $\mathcal{M}(T)$ is continuous for any finite subset $\A\subseteq \mathcal{C}(T)$, where 
$p_{\A}:\mathcal{M}(T)\to \R$ is 
defined by
$
p_{\A}(y):=\max_{h\in \A}|\langle y,h\rangle|.
$

Let us here specify the concept of accumulation points and limit points in the sense of the $\wast$-topology.
Let $\{y^k\}$ be a sequence in $\mathcal{M}(T)$ and $y^{\ast}\in \mathcal{M}(T)$.
\begin{enumerate}
\item We call $y^{\ast}$ the $\wast$ limit point of $\{y^k\}$
if for any neighborhood $\mathcal{N}(y^{\ast})$ of $y^{\ast}$ with respect to the $\wast$-topology
there exists an integer $K\ge 0$ such that $y^k\in\mathcal{N}(y^{\ast})$ for any $k\ge K$.
We then say $\{y^k\}$ $\wlyast$ converges to $y^{\ast}$ and often write it as {$\wlim_{k\to \infty}y^k=y^{\ast}$}.
\item
We call $y^{\ast}$ a $\wast$ accumulation point of $\{y^k\}$
if for any integer $K\ge 0$ and neighborhood $\mathcal{N}(y^{\ast})$ of $y^{\ast}$ with respect to the $\wast$-topology
there exists an integer $k\ge K$ such that $y^k\in\mathcal{N}(y^{\ast})$.
\end{enumerate}
\section{Primal-dual path-following method}\label{sec:3}
\subsection{KKT conditions for the SISDP}
In this section, we present the Karush-Kuhn-Tucker (KKT) conditions for the SISDP together with Slater's constraint qualification, abbreviated as SCQ.
Here, SCQ for the SISDP is defined precisely as below:
\begin{definition}
We say that the Slater constraint qualification (SCQ) holds for the SISDP if there exists some $\bar{x}\in \R^n$ such that
$
F(\bar{x})\in S^m_{++}$ and $g(\bar{x},\tau)<0\ (\tau\in T).
$
\end{definition}
\begin{theorem}\label{kkt}
Let $x^{\ast}\in \R^n$ be a local optimal solution of the SISDP\,\eqref{lsisdp}. 
Then, under the SCQ,
there exists some finite Borel-measure $y\in \M(T)$ such that 
\begin{align}
&\nabla f(x^{\ast})+\int_T\nabla_xg(x^{\ast},\tau)\ydt-
\FiV
=0,\label{e1}\\ 
  &{F}(x^{\ast})\circ V=O,\ F(x^{\ast})\in S^m_+,\ V\in S^m_+,\label{e2}\\
&\int_Tg(x^{\ast},\tau)\ydt=0,\ g(x^{\ast},\tau)\le 0\ (\tau\in T),\ y\in\M_+(T), \label{e4}
\end{align}
where 
$V\in S^m$ is a Lagrange multiplier matrix associated with the constraint $F(x)\in S^m_{+}$. 
In particular, there exists some discrete measure $y\in \M_+(T)$ satisfying the above conditions and $|{\rm supp}(y)|\le n$, where
${\rm supp}(y):=\{\tau\in T\mid y(\{\tau\})\neq 0\}$.
Conversely, when $f$ is convex, if the above conditions {\eqref{e1}--\eqref{e4}} hold, then $x^{\ast}$ is an optimum of the SISDP\,\eqref{lsisdp}.   
\end{theorem}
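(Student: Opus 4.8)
\medskip\noindent\textit{Proof plan.}\quad The plan is to recast the SISDP\,\eqref{lsisdp} as a cone-constrained problem over $\R^n$ and to invoke the standard first-order necessary optimality conditions for such problems, e.g.\ from \cite[Section~3]{BonSp}. Define $G:\R^n\to\C(T)$ by $G(x)(\tau):=g(x,\tau)$ and let $\C_+(T):=\{h\in\C(T)\mid h(\tau)\ge 0\ (\tau\in T)\}$, whose dual cone relative to $\langle\cdot,\cdot\rangle$ is precisely $\M_+(T)$. Then \eqref{lsisdp} amounts to minimizing $f(x)$ subject to $-G(x)\in\C_+(T)$ and $F(x)\in S^m_+$; since each $g(\cdot,\tau)$ is convex and $F$ is affine, this is a convex program with closed convex feasible set.

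First I would check that SCQ is exactly the constraint qualification needed. The point $\bar x$ in the definition of SCQ satisfies $\min_{\tau\in T}\big(-g(\bar x,\tau)\big)>0$, so $-G(\bar x)$ lies in the interior of $\C_+(T)$ in the supremum norm (this interior is nonempty, being the set of strictly positive functions), and $F(\bar x)\in S^m_{++}=\inn S^m_+$; combined with convexity of $G$ and affineness of $F$, this is the generalized Slater condition, hence Robinson's CQ holds at $x^{\ast}$. The abstract KKT theorem then provides $y\in\M_+(T)$ and $V\in S^m_+$ with $\nabla f(x^{\ast})+[DG(x^{\ast})]^{\ast}y-[DF(x^{\ast})]^{\ast}V=0$, $\langle y,G(x^{\ast})\rangle=0$, and $F(x^{\ast})\bullet V=0$. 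Computing the adjoints gives $[DG(x^{\ast})]^{\ast}y=\int_T\nabla_x g(x^{\ast},\tau)\,\ydt$ and $[DF(x^{\ast})]^{\ast}V=\FiV$, so the stationarity condition is \eqref{e1}; $\langle y,G(x^{\ast})\rangle=\int_T g(x^{\ast},\tau)\,\ydt=0$ together with $y\in\M_+(T)$ and $g(x^{\ast},\cdot)\le 0$ is \eqref{e4}; and $F(x^{\ast})\bullet V=0$ with $F(x^{\ast}),V\in S^m_+$ forces $F(x^{\ast})^{1/2}VF(x^{\ast})^{1/2}=O$, hence $F(x^{\ast})V=O$ and $F(x^{\ast})\circ V=O$, which is \eqref{e2}.

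To replace $y$ by a discrete measure, note that $\int_T g(x^{\ast},\tau)\,\ydt=0$ with nonpositive integrand and $y\in\M_+(T)$ forces $\supp(y)\subseteq T_0:=\{\tau\in T\mid g(x^{\ast},\tau)=0\}$, which is compact. Writing $a:=\FiV-\nabla f(x^{\ast})=\int_T\nabla_x g(x^{\ast},\tau)\,\ydt$ and using that an integral of a continuous $\R^n$-valued map against a nonnegative measure on a compact set is a nonnegative multiple of a point of the (compact) convex hull of its range, we obtain $a\in\cone\{\nabla_x g(x^{\ast},\tau)\mid\tau\in T_0\}$; by the conic form of Carathéodory's theorem, $a=\sum_{j=1}^q\lambda_j\nabla_x g(x^{\ast},\tau_j)$ with $\lambda_j>0$, $\tau_j\in T_0$ and $q\le n$. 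Then $\tilde y:=\sum_{j=1}^q\lambda_j\delta_{\tau_j}\in\M_+(T)$ is discrete with $|\supp(\tilde y)|\le n$ and, since $\tau_j\in T_0$, still satisfies \eqref{e1}--\eqref{e4}.

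For the converse, assume $f$ convex and \eqref{e1}--\eqref{e4}. The function $L(x):=f(x)+\int_T g(x,\tau)\,\ydt-V\bullet F(x)$ is convex in $x$ (as $y\in\M_+(T)$, $f$ and each $g(\cdot,\tau)$ are convex, and $V\bullet F(\cdot)$ is affine) and $\nabla L(x^{\ast})=0$ by \eqref{e1}, so $x^{\ast}$ globally minimizes $L$ on $\R^n$; for any feasible $x$ of \eqref{lsisdp} one has $\int_T g(x,\tau)\,\ydt\le 0$ and $V\bullet F(x)\ge 0$, whence $f(x)\ge L(x)\ge L(x^{\ast})=f(x^{\ast})$, the last equality using the complementarity in \eqref{e2} and \eqref{e4}. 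I expect the crux to be the first step — identifying SCQ with the correct infinite-dimensional constraint qualification (nonemptiness of the interior of $\C_+(T)$ and the reduction of Robinson's CQ to generalized Slater under convexity) — together with the measure-theoretic Carathéodory reduction; the adjoint computations and the matrix complementarity step are routine.
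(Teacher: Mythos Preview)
Your proposal is correct and follows the standard route: the paper's own proof consists only of the observation that $F(x^{\ast})\bullet V=0$ with $F(x^{\ast}),V\in S^m_+$ is equivalent to $F(x^{\ast})\circ V=O$ with $F(x^{\ast}),V\in S^m_+$, and otherwise defers entirely to \cite[Theorem~2.4]{okuno2012regularized}. Your argument --- cone-constrained reformulation, SCQ $\Rightarrow$ Robinson's CQ via the nonempty interior of $\C_+(T)$, abstract KKT, adjoint computation, and the conic Carath\'eodory reduction to a discrete measure supported on the active set --- is exactly the standard proof that the cited reference carries out, so there is no substantive difference in approach.
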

\begin{proof}
Note that $F(x^{\ast})\bullet V=0,\ F(x^{\ast})\in S^m_+$ and $V\in S^m_+$ hold
if and only if $F(x^{\ast})\circ V=O,\ F(x^{\ast})\in S^m_+$, and $V\in S^m_+$.
Then, the claim is proved in a manner similar to \cite[Theorem~2.4]{okuno2012regularized}. 
\hspace{\fill}$\square$
\end{proof}
\noindent {The system \eqref{e1}--\eqref{e4} is called the Karush-Kuhn-Tucker (KKT) conditions for the SISDP\,\eqref{lsisdp}.}
{We call 
$(x,y,V)$ satisfying the KKT conditions \eqref{e1}--\eqref{e4} a KKT point of the SISDP\,\eqref{lsisdp} in particular.}
\subsection{Description of the algorithm}\label{sec:rmu}
In this section, we propose an algorithm for solving the SISDP\,\eqref{lsisdp}, whose 
fundamental framework is analogous to the primal-dual interior point method developed for solving the nonlinear SDP in \cite{yabe}.
It aims to find a KKT point of the SISDP\,\eqref{lsisdp}, i.e., a point satisfying the {optimality} conditions \eqref{e1}--\eqref{e4} for the SISDP\,\eqref{lsisdp}.

{Let us define the} function $R_{\mu}:\R^n\times \mathcal{M}(T)\times S^m_{+}\to \R$ with a parameter $\mu\ge 0$ by
\begin{equation}
R_{\mu}(x,y,V):=
\sqrt{
\theta(x)^2+
\|\phi_1(x,{y},{V})\|^2+\phi_2(x,y)^2+
\|\phi_3(x,V,\mu)\|^2},\notag 
\end{equation}
where
\begin{align}
\theta(x)&:=\max_{\tau\in T}\,\left(g(x,\tau)\right)_+,\vspace{0em}\notag \\
\phi_1(x,{y},{V})&:=\nabla f(x)+{\displaystyle \int_T\nabla_xg(x,\tau)\ydt} -\FiV,\vspace{0em}\notag \\
\phi_2(x,y)&:=\int_Tg(x,\tau)\ydt,\notag\\
\phi_3(x,V,\mu)&:={{\rm svec}}\left(F(x)\circ V-\mu I\right).\notag 
\end{align}
Notice that a point satisfying $R_0(x,y,V)=0$ with $F(x)\in S^m_+$
and $V\in S^m_+$ is nothing but a KKT point of the SISDP\,\eqref{lsisdp}.   
In terms of the function $R_{\mu}$, we define a barrier KKT(BKKT) point by perturbing 
the semi-definite complementarity condition in the KKT conditions\,\eqref{e1}--\eqref{e4}.
\begin{definition}\label{def_bkkt} 
Let $\mu>0$. 
We call $\left(x,y,V\right)\in \R^n\times \mathcal{M}(T){\times S^m}$ a barrier Karush-Kuhn-Tucker (BKKT) point of the SISDP\,\eqref{lsisdp} if 
$R_{\mu}(x,y,V)=0$,\ $y\in \mathcal{M}_+(T)$,\ $F(x)\in S^m_{++}$, $V\in S^m_{++}$.
\end{definition}
Additionally, given a positive parameter $\epsilon$, we define a neighborhood of the BKKT points with barrier parameter $\mu$:
\begin{equation*}
{\mathcal{N}}_{\mu}^{\varepsilon}:=\left\{w:=(x,y,V)\in \R^n\times \mathcal{M}_+(T)\times S^m_{++}\mid R_{\mu}(w)\le \varepsilon,\ F(x)\in S^m_{++}\right\}.
\end{equation*}
The algorithm generates a sequence of 
approximate BKKT points $\{w^k\}$ for the SISDP\,\eqref{lsisdp}
{such that 
$w^k\in \mathcal{N}_{\mu_k}^{\epsilon_k}$} for each $k$
while driving the values of both parameters $\mu_k$ and $\epsilon_k$ to $0$ as $k$ tends to $\infty$.\vspace{0.5em}\\
{\bf Algorithm~1} (Primal-dual path following method)\vspace{0.5em}
\begin{description}
\item[Step 0 (Initial setting):]
Choose an initial iteration point $w^0:=(x^0,y^0,V_0)\in \R^n\times \mathcal{M}_+(T)\times S^m$ such that
$F(x^0)\in S^m_{++}$ and $V_0\in S^m_{++}$.\vspace{-0.0em}
Choose the initial parameters $\mu_0>0$, $\epsilon_0>0$ 
and $\beta\in (0,1)$.
Let $k:=0$.
\item[Step~1 (Stopping rule):]
Stop if 
\begin{equation}
R_{0}(w^k)=0,\ {F(x^k)\in S^m_+,\ V_k\in S^m_+,\ y^k\in \mathcal{M}_+(T).}
\end{equation}
Otherwise, go to Step~2.
\item[Step~{2} (Computing an approximate BKKT point):]
Find {an} approximate BKKT point
$w^{k+1}$ such that 
\begin{equation}
w^{k+1}\in \mathcal{N}_{\mu_{k}}^{\epsilon_{k}}.
\label{bkkt_eq}
\end{equation}
\item[Step~3 (Update):] 
Set $\mu_{k+1}:=\beta \mu_k$ and $\epsilon_{k+1}:=\beta \epsilon_k$.
Let $k:=k+1$. Return to Step~1.
\end{description}
In the recent work\,\cite{okuno2018sc}, the authors propose the interior-point SQP method for computing a BKKT point and show its global convergence property.
If we use the interior-point SQP method as a subroutine to find an approximate BKKT point satisfying condition\,\eqref{bkkt_eq}, Step~2 of Algorithm~1 is well-defined, i.e., such an approximate BKKT point
can be found {in} finitely many steps.
\subsection{Convergence analysis}
In this section, 
we suppose the well-definedness of Step~3 in Algorithm~1 and establish its $\wast$ convergence to KKT points
of SISDP\,\eqref{lsisdp}. Furthermore, we will characterize $\wast$ accumulation points 
of the generated sequence 
more precisely for some special cases.
For the sake of analysis, we assume that Algorithm~1 produces an infinite sequence and further make the following assumptions:\vspace{1.0em}\\
{\bf Assumption~A}\vspace{0.0em}
\begin{enumerate}
\item\label{A2} {The} feasible set of SISDP\,\eqref{lsisdp} is {nonempty and} compact.
\item Slater's constraint qualification holds for SISDP\,\eqref{lsisdp}.
\end{enumerate}
Let $S^{\ast}\subseteq \R^n$ be the optimal solution set of SISDP\,\eqref{lsisdp} and $\bar{v}\in \R$ be
a constant larger than the optimal value of the SISDP.
If $f$ is convex, Assumption~A-\ref{A2} can be replaced with the milder assumption
that $S^{\ast}$ is compact by adding a convex constraint $f(x)\le \bar{v}$ to the SISDP without changing the shape of $S^{\ast}$. 
Under the above assumptions, we first show that the generated sequences $\{x^k\}$ and $\left\{(y^k,V_k)\right\}$ are bounded. 
\begin{proposition}\label{bound1}
Suppose that Assumption A-\ref{A2} holds. Then, any sequence $\{x^k\}$
produced by Algorithm~1 is bounded.
\end{proposition}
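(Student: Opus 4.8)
The plan is to show that every iterate produced by Algorithm~1 lies in a single \emph{fixed} relaxed feasible set, and then to deduce that this set is bounded from Assumption~A-\ref{A2} together with the convexity of the constraint functions.

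First I would record which constraints the iterates obey. For $k\ge1$ the point $x^k$ is returned in Step~2 with $w^k=(x^k,y^k,V_k)\in\mathcal{N}_{\mu_{k-1}}^{\epsilon_{k-1}}$, hence $R_{\mu_{k-1}}(w^k)\le\epsilon_{k-1}$ and $F(x^k)\in S^m_{++}$. Since $\theta(x^k)\le R_{\mu_{k-1}}(w^k)$ by the definition of $R_\mu$, and since $\epsilon_{k-1}=\beta^{k-1}\epsilon_0\le\epsilon_0$ by Step~3, we get $\theta(x^k)\le\epsilon_0$, i.e.\ $g(x^k,\tau)\le\epsilon_0$ for every $\tau\in T$. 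Thus $\{x^k\}_{k\ge1}$ is contained in
\[
\mathcal{F}_{\epsilon_0}:=\{x\in\R^n\mid g(x,\tau)\le\epsilon_0\ (\tau\in T),\ F(x)\in S^m_{+}\},
\]
and, $x^0$ being a single fixed point, it suffices to prove that $\mathcal{F}_{\epsilon_0}$ is bounded.

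Next I would prove boundedness of $\mathcal{F}_{\epsilon_0}$ by contradiction. Suppose there is a sequence $\{z^j\}\subseteq\mathcal{F}_{\epsilon_0}$ with $\|z^j\|\to\infty$; passing to a subsequence, $z^j/\|z^j\|\to d$ for some $d$ with $\|d\|=1$. By Assumption~A-\ref{A2} the feasible set of the SISDP is nonempty, so fix $\hat x$ in it, meaning $g(\hat x,\tau)\le0$ for all $\tau\in T$ and $F(\hat x)\in S^m_{+}$. Fix $t>0$, and for $j$ so large that $\|z^j\|\ge t$ set $\lambda_j:=t/\|z^j\|\in(0,1)$. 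Convexity of $g(\cdot,\tau)$ gives
\[
g\bigl((1-\lambda_j)\hat x+\lambda_j z^j,\tau\bigr)\le(1-\lambda_j)g(\hat x,\tau)+\lambda_j g(z^j,\tau)\le\lambda_j\epsilon_0,
\]
and affineness of $F$ gives $F\bigl((1-\lambda_j)\hat x+\lambda_j z^j\bigr)=(1-\lambda_j)F(\hat x)+\lambda_j F(z^j)\in S^m_{+}$. Since $(1-\lambda_j)\hat x+\lambda_j z^j=\hat x+\lambda_j(z^j-\hat x)\to\hat x+t d$ as $j\to\infty$, continuity of $g(\cdot,\tau)$ and closedness of $S^m_{+}$ yield $g(\hat x+t d,\tau)\le0$ for all $\tau\in T$ and $F(\hat x+t d)\in S^m_{+}$. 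As $t>0$ was arbitrary (and $t=0$ is trivial), $\hat x+t d$ is feasible for the SISDP for all $t\ge0$, which contradicts compactness of the feasible set because $\|\hat x+t d\|\to\infty$. Hence $\mathcal{F}_{\epsilon_0}$, and therefore $\{x^k\}$, is bounded.

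I do not expect a genuinely hard step here; the one point to watch is that the iterates satisfy only the \emph{relaxed} inequality $g(x^k,\cdot)\le\epsilon_0$ rather than $g(x^k,\cdot)\le0$, so one cannot invoke compactness of the feasible set directly. Convexity of each $g(\cdot,\tau)$ is precisely what prevents the relaxation from introducing new recession directions — equivalently, the recession cone of a sublevel set of a convex function does not depend on the level — and this is the heart of the argument. (One could phrase it through recession cones: $\mathrm{rec}(\mathcal{F}_{\epsilon_0})=\bigcap_{\tau\in T}\mathrm{rec}\{x\mid g(x,\tau)\le0\}\cap\mathrm{rec}\{x\mid F(x)\in S^m_{+}\}=\mathrm{rec}(\mathcal{F}_{0})=\{0\}$, using that $\mathcal{F}_{0}$ is nonempty and compact.)
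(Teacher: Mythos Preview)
Your proof is correct and follows essentially the same approach as the paper. The paper packages the argument slightly differently: it defines the single convex function $\phi(x):=\max\bigl(-\lambda_{\min}(F(x)),\ \max_{\tau\in T}g(x,\tau)\bigr)$, observes that $\{x\mid\phi(x)\le 0\}$ equals the (compact) feasible set, and then invokes the standard convex-analysis fact that one nonempty compact sublevel set forces all sublevel sets to be compact, so $\{x\mid\phi(x)\le\epsilon_0\}\supseteq\{x^k\}$ is bounded. Your recession-direction argument is precisely a direct proof of that fact, so the content is the same; your version is more self-contained, the paper's is terser.
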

\begin{proof}
Denote the feasible set of SISDP\,\eqref{lsisdp} by $\mathcal{F}$ and 
define a proper closed convex function $\phi:\R^n\to \R$ by
$$
\phi(x):={\max}\Big(-\lambda_{\min}(F(x)),\ \max_{\tau\in T}g(x,\tau)\Big).
$$
Since the level set $\{x\in \R^n \mid \phi(x)\le 0\}(=\mathcal{F})$ is compact, 
any level set $\{x\in \R^n \mid \phi(x)\le \eta\}$ with $\eta>0$ is also compact. From \eqref{bkkt_eq} and $\varepsilon_k\le \varepsilon_0$
for all $k$ sufficiently large, 
it is not difficult to show that
$\{x^k\}\subseteq \{x\in \R^n \mid \phi(x)\le \varepsilon_0\}$, where $\varepsilon_0$ is an algorithmic parameter given in Step~0, and thus $\{x^k\}$ is bounded.
\\
\hspace{\fill}$\square$
\end{proof}
 \begin{proposition}\label{prop2}
 Suppose that Assumption~A holds. 
 Then, the generated Lagrange multiplier sequences $\{V_k\}\subseteq S^m_{++}$ {and $\{y^k\}\subseteq M_+(T)$} are bounded.   
 \end{proposition}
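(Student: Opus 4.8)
\noindent\emph{Proof plan.}
The plan is to reduce the statement to a single scalar estimate of the form
\[
\delta\,{\rm Tr}(V_{k+1})+\rho\,\|y^{k+1}\|\;\le\;C
\]
valid for all sufficiently large $k$, with constants $\delta,\rho,C>0$ independent of $k$. Once this is in hand, boundedness follows immediately: since $V_{k+1}\in S^m_{++}$ and $y^{k+1}\in\mathcal{M}_+(T)$, both terms on the left are nonnegative, so ${\rm Tr}(V_{k+1})\le C/\delta$ and $\|y^{k+1}\|\le C/\rho$; and because $\|V_{k+1}\|_F=\sqrt{\sum_i\lambda_i(V_{k+1})^2}\le\sum_i\lambda_i(V_{k+1})={\rm Tr}(V_{k+1})$, the bound on the trace also bounds $\{V_k\}_{k\ge1}$ in Frobenius norm. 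The finitely many initial terms $V_0,y^0$ play no role.

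First I would fix the data coming from Assumption~A. Let $\bar x$ be a Slater point, so $F(\bar x)\in S^m_{++}$ and $g(\bar x,\tau)<0$ for all $\tau\in T$. Compactness of $T$ and continuity of $g$ give $\rho:=-\max_{\tau\in T}g(\bar x,\tau)>0$, and $F(\bar x)\succ O$ yields some $\delta>0$ with $F(\bar x)-\delta I\in S^m_+$. By Proposition~\ref{bound1} the sequence $\{x^k\}$ is bounded. Since $\mu_k=\beta^k\mu_0\le\mu_0$ and $\epsilon_k=\beta^k\epsilon_0\le\epsilon_0$, the inclusion $w^{k+1}\in\mathcal{N}_{\mu_k}^{\epsilon_k}$ together with the definition of $R_{\mu}$ forces each of $\theta(x^{k+1})$, $\|\phi_1(x^{k+1},y^{k+1},V_{k+1})\|$, $|\phi_2(x^{k+1},y^{k+1})|$ and $\|\phi_3(x^{k+1},V_{k+1},\mu_k)\|$ to be at most $\epsilon_0$ for all $k$.

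The heart of the argument is a primal--dual pairing: I would take the Euclidean inner product of $\phi_1(x^{k+1},y^{k+1},V_{k+1})$ with the fixed direction $\bar x-x^{k+1}$. Using the identity $\sum_{i=1}^n\xi_i\,(F_i\bullet V)=\big(F(\xi)-F_0\big)\bullet V$ for any $\xi\in\R^n$, the contribution of the term $(F_i\bullet V_{k+1})_{i=1}^n$ is exactly $\big(F(\bar x)-F(x^{k+1})\big)\bullet V_{k+1}$, so rearranging gives
\[
F(\bar x)\bullet V_{k+1}=\nabla f(x^{k+1})^{\top}(\bar x-x^{k+1})+\int_T\nabla_xg(x^{k+1},\tau)^{\top}(\bar x-x^{k+1})\,dy^{k+1}(\tau)+F(x^{k+1})\bullet V_{k+1}-\phi_1(x^{k+1},y^{k+1},V_{k+1})^{\top}(\bar x-x^{k+1}).
\]
Convexity of $g(\cdot,\tau)$ yields $\nabla_xg(x^{k+1},\tau)^{\top}(\bar x-x^{k+1})\le g(\bar x,\tau)-g(x^{k+1},\tau)$; integrating against the nonnegative measure $y^{k+1}$ and using $\phi_2(x^{k+1},y^{k+1})=\int_Tg(x^{k+1},\tau)\,dy^{k+1}(\tau)$ together with $\int_Tg(\bar x,\tau)\,dy^{k+1}(\tau)\le-\rho\,y^{k+1}(T)=-\rho\,\|y^{k+1}\|$ bounds this integral term from above by $-\rho\,\|y^{k+1}\|-\phi_2(x^{k+1},y^{k+1})$.

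It then remains to bound the surviving pieces. On the left, $F(\bar x)\bullet V_{k+1}=(F(\bar x)-\delta I)\bullet V_{k+1}+\delta\,{\rm Tr}(V_{k+1})\ge\delta\,{\rm Tr}(V_{k+1})$ since both factors are positive semidefinite. On the right, $\nabla f(x^{k+1})^{\top}(\bar x-x^{k+1})$ is bounded because $\nabla f$ is continuous and $\{x^k\}$ is bounded; $|\phi_1(\cdot)^{\top}(\bar x-x^{k+1})|\le\epsilon_0\|\bar x-x^{k+1}\|$ is bounded; $|\phi_2(x^{k+1},y^{k+1})|\le\epsilon_0$; and $F(x^{k+1})\bullet V_{k+1}={\rm Tr}\big(F(x^{k+1})\circ V_{k+1}\big)\le m\mu_k+\sqrt{m}\,\|\phi_3(x^{k+1},V_{k+1},\mu_k)\|\le m\mu_0+\sqrt{m}\,\epsilon_0$, where I use $|{\rm Tr}(A)|\le\sqrt{m}\,\|A\|_F$ and $\|\phi_3(x^{k+1},V_{k+1},\mu_k)\|=\|F(x^{k+1})\circ V_{k+1}-\mu_k I\|_F$. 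Absorbing all these into a single constant $C$ produces the desired inequality. The only delicate point I anticipate is the bookkeeping in the primal--dual pairing---keeping track of the signs and ensuring the $\int_T\nabla_xg\,dy^{k+1}$ term is genuinely controlled through convexity of $g$ and the strict Slater inequality; everything else is a routine consequence of $w^{k+1}\in\mathcal{N}_{\mu_k}^{\epsilon_k}$ and Proposition~\ref{bound1}.
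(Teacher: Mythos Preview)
Your proof is correct and takes a genuinely different route from the paper's. The paper argues by contradiction: it supposes $\|(V_k,y^k)\|\to\infty$ along a subsequence, normalizes to obtain $(W_k,p^k)$ with unit norm, passes to a $\wast$-accumulation point $(W_\ast,p^\ast)$ satisfying the homogeneous limit system, and then pairs this system against the Slater point $\tilde x$ to force $W_\ast=O$ and $p^\ast=0$, contradicting $\|(W_\ast,p^\ast)\|=1$. You instead produce an explicit, uniform scalar bound $\delta\,{\rm Tr}(V_{k+1})+\rho\,\|y^{k+1}\|\le C$ directly, by taking the inner product of the residual $\phi_1$ with $\bar x-x^{k+1}$ and invoking convexity of $g(\cdot,\tau)$ and the strict Slater margin. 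Both arguments rest on the same two ingredients---the Slater point and convexity of the semi-infinite constraint---but your version is more elementary (no $\wast$-compactness or limit passage is needed) and yields quantitative constants, while the paper's normalization-and-limit approach is the classical template that transfers verbatim to settings where a direct pairing estimate is less immediate.
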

 \begin{proof}
 For simplicity of expression,
 denote $\tilde{w}^k:=(V_k,y^k)\in S^m\times \mathcal{M}_+(T)$ and 
 \begin{equation*}
 W_k:=\frac{V_k}{\|\tilde{w}^k\|},\ p^k:=\frac{y^k}{\|\tilde{w}^k\|}
 \end{equation*}
 {where $\|\cdot\|$ is a suitable norm such that
$\|\tilde{w}^k\|^2=\|V_k\|^2+\|y^k\|^2$
on $S^m\times \mathcal{M}(T)$.}
 For contradiction, suppose that there exists a subsequence 
 $\{\tilde{w}^k\}_{k\in K}\subseteq \{\tilde{w}^k\}$
 such that $\|\tilde{w}^k\|\to \infty\ (k\in K\to \infty)$.
 Note that $\{(W_k,p^k)\}$ is bounded.
 Notice also that 
 the corresponding sequence
 $\{x^k\}_{k\in K}$ is bounded from Proposition\,\ref{bound1}.
Recall that any bounded sequence in $\M(T)$ has at least one $\wast$ accumulation point and one can extract a subsequence $\wlyast$ converging to that point. Thanks to this property, without loss of generality we can assume that there exists a point $\left(x^{\ast},W_{\ast},p^{\ast}\right)\in \R^n\times S^m_+\times \M_+(T)$ such that
 {\begin{align*}
 \lim_{k\in K\to \infty}\left(x^k,W_k\right)
 =\left(x^{\ast},W_{\ast}\right),\ \wlim_{k\in K\to \infty} p^k=p^{\ast}.\notag 
 \end{align*}
Note, in particular, that $\|(W_{\ast},p^{\ast})\|=1$, since $\wlim_{k\in K\to\infty}p^k=p^{\ast}$ entails 
the relation that $$
\lim_{k\in K\to \infty}\|p^k\|
=\lim_{k\in K\to \infty}\int_Tdp^k(\tau)=\int_Tdp^{\ast}(\tau)=\|p^{\ast}\|
$$ and therefore 
\begin{align*}
\|(W_{\ast},p^{\ast})\|^2&=\|W_{\ast}\|^2+\|p^{\ast}\|^2\\
                                             &=\lim_{k\in K\to\infty}\left(\|W_{k}\|^2+\|p^{k}\|^2\right)\\
                                             &=1.
\end{align*}
 From \eqref{bkkt_eq}, for {each} $k\ge 1$, we have  
 \begin{align}
 &\left\|\frac{\nabla f(x^k)}{\|\tilde{w}^k\|}-\FiWk
 +\int_T\nabla_x g(x^k,\tau)dp^k(\tau)\right\|\le \frac{\epsilon_{k-1}}{\|\tilde{w}^k\|},\notag \\
 &\left|
 \int_Tg(x^k,\tau)dp^k(\tau)
 \right|\le {\frac{\epsilon_{k-1}}{\|\tilde{w}^k\|}},\ p^k\in\M_+(T),\notag \\
 &\left\|F(x^k)\circ W_k{-\frac{\mu_{k-1}}{\|\tilde{w}^k\|}I}\right\|\le \frac{\epsilon_{k-1}}{\|\tilde{w}^k\|},\ F(x^k)\in S^m_{++},\ W_k\in S^m_{++}.\notag 
 \end{align}
 By letting $k\in K\to \infty$, we obtain 
 \begin{align}
&\FiWast
-\int_T\nabla_x g(x^{\ast},\tau)dp^{\ast}(\tau)=0,\label{cp1} \\
 &\int_Tg(x^{\ast},\tau)dp^{\ast}(\tau)=0,\ p^{\ast}\in\mathcal{M}_+(T),\label{cp4} \\
 &F(x^{\ast})\circ W_{\ast}=O,\ F(x^{\ast})\in S^m_+,\ W_{\ast}\in S^m_+.\label{cp5}
 \end{align} 
 Now, choose a Slater point $\tilde{x}\in \R^n$ arbitrarily and let $\tilde{d}:=\tilde{x}-x^{\ast}$.
Notice here that 
\begin{equation}
F(\tilde{x})\bullet W_{\ast}\ge 0,\ \int_Tg(\tilde{x},\tau)dp^{\ast}(\tau)\le 0, \label{eq:0807}
\end{equation}
since 
\begin{equation}
F(\tilde{x})\in S^m_{++},\ W_{\ast}\in S^m_{+},\ \max_{\tau\in T}g(\tilde{x},\tau)<0,\ p^{\ast}\in \M_+(T). \label{eq:0807-2}
\end{equation}
Then, it holds that
 \begin{eqnarray}
   \lefteqn{F(\tilde{x})\bullet W_{\ast}-\int_Tg(\tilde{x},\tau)dp^{\ast}(\tau)}\notag \\
 &= &F(x^{\ast}+\tilde{d})\bullet W_{\ast}
 -\int_Tg(x^{\ast}+\tilde{d},\tau)dp^{\ast}(\tau)\notag \\ 
 &\le&F(x^{\ast}+\tilde{d})\bullet W_{\ast}-\int_T\left(g(x^{\ast},\tau)
+\nabla_xg(x^{\ast},\tau)^{\top}\tilde{d}\right)dp^{\ast}(\tau)\notag \\
&=&\tilde{d}^{\top}
\FiWast
-\int_T \left(\nabla _xg(x^{\ast},\tau)^{\top}\tilde{d}\right)dp^{\ast}(\tau)\notag \\
 &=&\tilde{d}^{\top}\left(
\FiWast 
-\int_T\nabla_xg(x^{\ast},\tau)dp^{\ast}(\tau)\right)\notag \\
 &=&0, \label{eqa:0807-2}
\end{eqnarray}
where the first inequality holds because $g(x^{\ast},\tau)+\nabla_xg(x^{\ast},\tau)^{\top}\tilde{d}\le g(x^{\ast}+\tilde{d},\tau)\ (\tau\in T)$ by the convexity of $g(\cdot,\tau)$.
Moreover, the third equality is obtained from \eqref{cp4} and the fact that $F(x^{\ast})\bullet W_{\ast}=0$ {by} \eqref{cp5}.
The last equality is due to \eqref{cp1}.
Combining \eqref{eq:0807} and \eqref{eqa:0807-2} implies that 
$F(\tilde{x})\bullet W_{\ast}=0$ and 
$\int_Tg(\tilde{x})dp^{\ast}(\tau)=0$, from which we can conclude
$W_{\ast}=O$ and $p^{\ast}=0$ by using \eqref{eq:0807-2} again.
 However, this contradicts $\|(W_{\ast},p^{\ast})\|=1$.
The proof is complete.}\\
\hspace{\fill}$\square$
\end{proof}
Now, we are ready to establish the global convergence property of Algorithm~1.  
\begin{theorem}\label{thm:0612}
Suppose that Assumption~A holds. Then, the sequence $\{(x^k,y^k,V_k)\}$ 
produced by Algorithm~1 is bounded. 
Let $(x^{\ast},y^{\ast},V_{\ast})\in \R^n\times \M_+(T)\times S^m$ be a $\wast$-accumulation point of $\{(x^k,y^k,V_k)\}$.
Then, $(x^{\ast},y^{\ast},V_{\ast})$ is a KKT point of SISDP\,\eqref{lsisdp}. 
In particular, if $f$ is convex, $x^{\ast}$ is an optimum.
\end{theorem}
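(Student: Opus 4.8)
The plan is to reduce the boundedness assertion to what is already proved, and then to identify any $\wast$-accumulation point as a KKT point by passing to the limit in the neighborhood condition $w^k\in\mathcal N_{\mu_{k-1}}^{\epsilon_{k-1}}$. Boundedness needs no new argument: Propositions~\ref{bound1} and~\ref{prop2} already give that $\{x^k\}$ is bounded in $\R^n$, $\{V_k\}$ is bounded in $S^m$, and $\{y^k\}$ has uniformly bounded total variation norm, whence $\{(x^k,y^k,V_k)\}$ is bounded; since every bounded sequence in $\mathcal{M}(T)$ admits a $\wast$-accumulation point, the set of such points is nonempty.

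Fix a $\wast$-accumulation point $(x^{\ast},y^{\ast},V_{\ast})$. First I would extract a subsequence $\{k\}_{k\in K}$ along which $(x^k,V_k)\to(x^{\ast},V_{\ast})$ in $\R^n\times S^m$ and $\wlim_{k\in K\to\infty}y^k=y^{\ast}$ in $\mathcal{M}(T)$. For every $k\ge1$ the inclusion $w^k\in\mathcal N_{\mu_{k-1}}^{\epsilon_{k-1}}$ coming from \eqref{bkkt_eq} gives $F(x^k)\in S^m_{++}$, $V_k\in S^m_{++}$, $y^k\in\mathcal{M}_+(T)$ and $R_{\mu_{k-1}}(w^k)\le\epsilon_{k-1}$; since $\mu_{k-1}=\beta^{k-1}\mu_0\to0$ and $\epsilon_{k-1}=\beta^{k-1}\epsilon_0\to0$, unfolding the definition of $R_\mu$ yields
\begin{align*}
&\theta(x^k)\le\epsilon_{k-1},\qquad \|\phi_1(x^k,y^k,V_k)\|\le\epsilon_{k-1},\\
&|\phi_2(x^k,y^k)|\le\epsilon_{k-1},\qquad \|\phi_3(x^k,V_k,\mu_{k-1})\|\le\epsilon_{k-1}.
\end{align*}
The routine limits are: $\theta$ is continuous ($g$ continuous, $T$ compact), so $\theta(x^{\ast})=0$, i.e.\ $g(x^{\ast},\tau)\le0$ on $T$; $F(x^{\ast})\in\cl S^m_{++}=S^m_+$ and $V_{\ast}\in S^m_+$ by continuity of $F$ and closedness of $S^m_+$; $y^{\ast}\in\mathcal{M}_+(T)$ since $\mathcal{M}_+(T)$ is $\wast$-closed; $\phi_3(x^k,V_k,\mu_{k-1})\to\svec(F(x^{\ast})\circ V_{\ast})$ because $\mu_{k-1}\to0$, so $F(x^{\ast})\circ V_{\ast}=O$; and in $\phi_1$ the terms $\nabla f(x^k)$ and $\FiVk$ converge by continuity to $\nabla f(x^{\ast})$ and $\FiVast$.

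The delicate point—and the step I expect to be the main obstacle—is passing to the limit in $\int_T\nabla_xg(x^k,\tau)\,dy^k(\tau)$ and $\int_Tg(x^k,\tau)\,dy^k(\tau)$, where both the integrand (via $x^k$) and the measure vary with $k$, so plain $\wast$-convergence does not apply directly. I would argue as in the proof of Proposition~\ref{prop2}: for $h$ equal to $g$ or to any component of $\nabla_xg$, write
\begin{align*}
\int_T h(x^k,\tau)\,dy^k(\tau)-\int_T h(x^{\ast},\tau)\,dy^{\ast}(\tau)
&=\int_T\bigl(h(x^k,\tau)-h(x^{\ast},\tau)\bigr)dy^k(\tau)\\
&\quad+\left(\int_T h(x^{\ast},\tau)\,dy^k(\tau)-\int_T h(x^{\ast},\tau)\,dy^{\ast}(\tau)\right);
\end{align*}
the first term is at most $\|y^k\|\cdot\max_{\tau\in T}|h(x^k,\tau)-h(x^{\ast},\tau)|$ in modulus, which tends to $0$ because $\|y^k\|$ is bounded (Proposition~\ref{prop2}) and $h(x^k,\cdot)\to h(x^{\ast},\cdot)$ uniformly on the compact set $T$ by joint continuity of $g$ and $\nabla_xg$, while the second term vanishes by $\wast$-convergence of $\{y^k\}_{k\in K}$ since $h(x^{\ast},\cdot)\in\mathcal{C}(T)$.

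With these limits in hand, letting $k\in K\to\infty$ in the four estimates gives $\phi_1(x^{\ast},y^{\ast},V_{\ast})=0$ and $\phi_2(x^{\ast},y^{\ast})=0$, so $(x^{\ast},y^{\ast},V_{\ast})$ satisfies \eqref{e1}--\eqref{e4} and is a KKT point of the SISDP\,\eqref{lsisdp}. Finally, when $f$ is convex, the converse half of Theorem~\ref{kkt} applies verbatim to $(x^{\ast},y^{\ast},V_{\ast})$ and yields that $x^{\ast}$ is a global optimum.
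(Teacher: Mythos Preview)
Your proof is correct and follows the same approach as the paper: boundedness from Propositions~\ref{bound1} and~\ref{prop2}, extraction of a convergent subsequence, and passage to the limit in \eqref{bkkt_eq} to recover the KKT conditions, with the final convexity claim reduced to Theorem~\ref{kkt}. The paper's own proof is a two-line sketch that simply says ``by letting $k\to\infty$ in \eqref{bkkt_eq}''; you have supplied the analytical detail the paper suppresses, in particular the splitting argument for $\int_T h(x^k,\tau)\,dy^k(\tau)$ when both integrand and measure vary, which is exactly the right way to justify that step.
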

\begin{proof}
The boundedness of $\{(x^k,y^k,V_k)\}$ follows from Propositions~{\ref{bound1} and \ref{prop2}}.
It remains to show the second half of the theorem.
We can assume  
$\lim_{k\to \infty}(x^k,V_k)=(x^{\ast},V_{\ast})$ and $\wlim_{k\to\infty}y^k=y^{\ast}$ without loss of generality.
Then, by letting $k\to \infty$ in \eqref{bkkt_eq}, 
we see that the KKT conditions\,\eqref{e1}--\eqref{e4} hold with $V=V_{\ast}$ and $y=y^{\ast}$.
By the second half of Theorem~\ref{kkt}, $x^{\ast}$ is an optimum of SISDP\,\eqref{lsisdp} when $f$ is convex.
\hspace{\fill}$\square$
\end{proof}
Subsequently, let us consider the situation where
the number of elements of ${\rm supp}(y^k)$ is bounded from above through execution of the algorithm.
In this case, we can find a more precise form of the $\wast$ accumulation points of $\{y^k\}$.
To see this, we begin with assuming $|{\rm supp}(y^k)|\le M$ for any $k\ge 0$ with some $M>0$, and consider a sequence $\{t^k\}\subseteq T^M:=\overbrace{T\times \cdots \times T}^{M\mbox{ times}}$ 
with $t^k:=(\tau^k_1,\tau^k_2,\ldots,\tau^k_M)$ such that $t^k$ has all elements of ${\rm supp}(y^k)$ as a sub-vector and
$y^k(\tau^k_i)=0$ if $\tau^k_i\notin {\rm supp}(y^k)$ for $i=1,2,\ldots,M$.
Denote $\zeta^k:=(y^k(\tau_1^k),y^k(\tau_2^k),\ldots,y^k(\tau_M^k))^{\top}\in \R^M_+$ for $k=1,2,\ldots$.
In a manner similar to Proposition\,\ref{prop2}, we can show that $\{t^k\}$ and the accompanying sequence $\{(x^k,V_k)\}$ are bounded and have accumulation points with regard to the norm topology. 
Without loss of generality, we suppose that there exist $(x^{\ast},V_{\ast})\in \R^n\times S^m_+$, $t^{\ast}=(\tau^{\ast}_1,\tau^{\ast}_2,\ldots,\tau^{\ast}_M)\in T^M$, and $\zeta^{\ast}=(\zeta^{\ast}_1,\zeta^{\ast}_2,\ldots,\zeta^{\ast}_M)\in \R^M_+$ such that $\lim_{k\to \infty}(x^k,V_k,\zeta^k,t^k)=(x^{\ast},V_{\ast},\zeta^{\ast},t^{\ast})$. Then we can establish the following theorem concerning the explicit form of the $\wast$-accumulation point of $\{(x^k,y^k,V_k)\}$. In the remainder of the section, we use the notations and symbols introduced in this paragraph.
\begin{theorem}\label{thm:0912}
Denote the {distinct} elements of $\{\tau_1^{\ast},\tau_2^{\ast},\ldots,\tau_M^{\ast}\}$ by $s_1,s_2,\ldots,s_p\in T$, where $p\le M$,  
and define a finite discrete measure $y^{\ast}:\mathcal{B}\to \R_+$ by 
\begin{equation}
y^{\ast}(A):=\sum_{j=1}^p\xi_j^{\ast}\delta_A(s_j)\ \ \ (A\in \mathcal{B}),\label{eq:yast}
\end{equation}
where 
$
 \xi_j^{\ast}:=\sum_{i:\tau_i^{\ast}=s_j}\zeta^{\ast}_i
$ for $j=1,2,\ldots,p$.
Then, $\wlim_{k\to \infty}y^k=y^{\ast}$ holds and $(x^{\ast},y^{\ast},V_{\ast})$ is a KKT point of SISDP\,\eqref{lsisdp}.
\end{theorem}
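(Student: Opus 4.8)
The plan is to separate the statement into two parts: the $\wast$ convergence $\wlim_{k\to\infty}y^k=y^\ast$, which I would derive purely from the finite-support structure of the iterates, and the KKT property of the limit triple, which I would reduce to Theorem~\ref{thm:0612}.

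For the convergence part, I would use that each $y^k$ is the discrete measure carried by $\{\tau^k_1,\dots,\tau^k_M\}$ with nonnegative weights $\zeta^k_i$, so that $\langle y^k,h\rangle=\sum_{i=1}^M\zeta^k_ih(\tau^k_i)$ for every $h\in\mathcal{C}(T)$. Since $\zeta^k\to\zeta^\ast$ in $\R^M$ and $t^k\to t^\ast$ in $T^M$ and $h$ is continuous, this sum converges to $\sum_{i=1}^M\zeta^\ast_ih(\tau^\ast_i)$; regrouping the indices according to the distinct limiting nodes $s_1,\dots,s_p$ and invoking $\xi^\ast_j=\sum_{i:\tau^\ast_i=s_j}\zeta^\ast_i$ rewrites the limit as $\sum_{j=1}^p\xi^\ast_jh(s_j)=\langle y^\ast,h\rangle$. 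As $h$ ranges over $\mathcal{C}(T)$ this is exactly $\wlim_{k\to\infty}y^k=y^\ast$. Along the way I would record that $y^\ast\in\mathcal{M}_+(T)$, since each $\xi^\ast_j$ is a sum of nonnegative numbers, and that $|\supp(y^\ast)|\le p\le M$.

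For the KKT part, I would note that combining this $\wast$ convergence with the assumed norm convergence $(x^k,V_k)\to(x^\ast,V_\ast)$ exhibits $(x^\ast,y^\ast,V_\ast)$ as a $\wast$-accumulation point (indeed a $\wast$-limit along the relabelled sequence) of $\{(x^k,y^k,V_k)\}$, which is bounded by Propositions~\ref{bound1} and~\ref{prop2}; Theorem~\ref{thm:0612} then applies directly and gives that $(x^\ast,y^\ast,V_\ast)$ is a KKT point of SISDP~\eqref{lsisdp}. If a self-contained argument is preferred, I would instead pass to the limit in the inclusion $w^k\in\mathcal{N}_{\mu_{k-1}}^{\epsilon_{k-1}}$ of Step~2: the integrals $\int_T\nabla_xg(x^k,\tau)\,dy^k(\tau)$ and $\int_Tg(x^k,\tau)\,dy^k(\tau)$ become the finite sums $\sum_i\zeta^k_i\nabla_xg(x^k,\tau^k_i)$ and $\sum_i\zeta^k_ig(x^k,\tau^k_i)$, joint continuity of $g$ and $\nabla_xg$ lets one pass to the limit in them, $F(x^k)\in S^m_{++}$ and $V_k\in S^m_{++}$ close up to $F(x^\ast)\in S^m_+$ and $V_\ast\in S^m_+$, and letting $\mu_{k-1},\epsilon_{k-1}\downarrow0$ reproduces \eqref{e1}--\eqref{e4} at $(x^\ast,y^\ast,V_\ast)$.

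The only step that needs genuine care is the bookkeeping in the convergence part: distinct atoms $\tau^k_i$ of $y^k$ may coalesce to one node $s_j$ in the limit, so convergence of the atom positions by itself would not pin down a measure. The point is that pairing against a fixed continuous $h$ cannot, in the limit, distinguish a tight cluster of atoms near $s_j$ from a single atom at $s_j$ carrying the aggregated mass $\xi^\ast_j$; this is precisely what the grouping in \eqref{eq:yast} encodes, and it is what makes the $\wast$ convergence survive the collapse.
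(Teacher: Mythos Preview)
Your proposal is correct and is precisely the straightforward argument the paper has in mind; indeed, the paper omits the proof entirely with the remark ``Since the proof is straightforward, we omit it.'' The two-part structure you describe---first computing $\langle y^k,h\rangle=\sum_{i=1}^M\zeta^k_ih(\tau^k_i)\to\sum_{j=1}^p\xi^\ast_jh(s_j)=\langle y^\ast,h\rangle$ by continuity and regrouping, then invoking Theorem~\ref{thm:0612} (or passing to the limit in $w^k\in\mathcal{N}_{\mu_{k-1}}^{\epsilon_{k-1}}$ directly)---is exactly the intended route, and your remark about coalescing atoms is the only point requiring any attention.
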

\begin{proof}
Since the proof is straightforward, we omit it.
\hspace{\fill}$\square$
\end{proof}
Let us end the section with the most concise but practical version for Theorem~\ref{thm:0912}.
Let $y^{\ast}$ be the measure defined by \eqref{eq:yast} and consider the case where
$|{\rm supp}(y^{\ast})|=M$.   
Then, we readily obtain the following corollary from Theorem~\ref{thm:0912}:
\begin{corollary}
Suppose that ${\rm supp}(y^{\ast})=\{\tau_1^{\ast},\tau_2^{\ast},\ldots,\tau_M^{\ast}\}$ and $\tau^{\ast}_i\neq \tau^{\ast}_j$ for any $i\neq j$.
Then,  
$\{y^k\}$ converges to $y^{\ast}$ strongly on $\mathcal{M}(T)$
and $(x^{\ast},y^{\ast},V_{\ast})$ is a KKT point for SISDP\,\eqref{lsisdp}.
\end{corollary}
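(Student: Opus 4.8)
The plan is to read off almost everything from Theorem~\ref{thm:0912} and to reserve the real work for the norm convergence. First, observe that the hypothesis $|\supp(y^\ast)|=M$ forces $p=M$ in Theorem~\ref{thm:0912}: every distinct value $s_j$ is then one of the $\tau_j^\ast$, every sum defining $\xi_j^\ast$ collapses to a single term, so $\xi_j^\ast=\zeta_j^\ast$, and hence $y^\ast(A)=\sum_{j=1}^M\zeta_j^\ast\delta_A(\tau_j^\ast)$ for $A\in\B$, with the $\tau_j^\ast$ pairwise distinct and every $\zeta_j^\ast>0$ (a zero weight would make $|\supp(y^\ast)|<M$). Theorem~\ref{thm:0912} then already gives both $\wlim_{k\to\infty}y^k=y^\ast$ and that $(x^\ast,y^\ast,V_\ast)$ is a KKT point of SISDP~\eqref{lsisdp}. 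So the entire substance of the corollary is to upgrade the weak$^\ast$ convergence to convergence in the total variation norm of $\M(T)$.

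For that, I would localize around the limit atoms. Since $T$ is a metric space and the $\tau_j^\ast$ are pairwise distinct, fix pairwise disjoint open sets $U_1,\ldots,U_M\subseteq T$ with $\tau_j^\ast\in U_j$, and put $T_0:=T\setminus\bigcup_{j=1}^M U_j$. From $t^k\to t^\ast$ we obtain $\tau_j^k\in U_j$ for every $j$ once $k$ is large, so the $\tau_1^k,\ldots,\tau_M^k$ are then pairwise distinct; from $\zeta^k\to\zeta^\ast$ and $\zeta_j^\ast>0$ we obtain $\zeta_j^k>0$ for every $j$ once $k$ is large, so for such $k$ necessarily $\supp(y^k)=\{\tau_1^k,\ldots,\tau_M^k\}$ and $y^k(A)=\sum_{j=1}^M\zeta_j^k\delta_A(\tau_j^k)$. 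Since the total variation is additive over the measurable partition $\{T_0,U_1,\ldots,U_M\}$, and since $y^k$ and $y^\ast$ both vanish on $T_0$ while on $U_j$ their difference is the measure $A\mapsto\zeta_j^k\delta_A(\tau_j^k)-\zeta_j^\ast\delta_A(\tau_j^\ast)$, the whole estimate reduces to showing that $\sum_{j=1}^M v_j^k\to0$, where $v_j^k$ denotes the total variation on $U_j$ of that difference.

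The sum $\sum_j v_j^k$ is the step I expect to be the real obstacle, and it is exactly where the non-degeneracy hypothesis must be used. In the total variation norm one has $v_j^k=\zeta_j^k+\zeta_j^\ast$ as soon as $\tau_j^k\ne\tau_j^\ast$, so mere parametric convergence $\tau_j^k\to\tau_j^\ast$, $\zeta_j^k\to\zeta_j^\ast$ is not enough; the argument has to exploit the discrete structure essentially --- each $y^k$ carries at most $M$ atoms and all $M$ limiting weights $\zeta_j^\ast$ are strictly positive, so no mass can split off or leak into $T_0$ --- to force the $M$ atoms of $y^k$ to match up one-to-one with the $M$ distinct atoms $\tau_j^\ast$ for $k$ large. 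Once the atoms are matched, $v_j^k=|\zeta_j^k-\zeta_j^\ast|\to0$, which finishes the norm convergence; the KKT-point conclusion, in contrast, needs nothing beyond citing Theorem~\ref{thm:0912}.
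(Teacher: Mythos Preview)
You have correctly diagnosed the crux of the problem: for two Dirac masses $\zeta\,\delta_\tau$ and $\zeta'\delta_{\tau'}$ with $\tau\ne\tau'$, the total-variation norm of their difference is $\zeta+\zeta'$, so the parametric convergence $\tau_j^k\to\tau_j^\ast$, $\zeta_j^k\to\zeta_j^\ast$ by itself does \emph{not} yield $\|y^k-y^\ast\|\to0$. Where your argument breaks is the final step. You write that the discrete structure should ``force the $M$ atoms of $y^k$ to match up one-to-one with the $M$ distinct atoms $\tau_j^\ast$ for $k$ large'' and then conclude $v_j^k=|\zeta_j^k-\zeta_j^\ast|$. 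But that last equality requires $\tau_j^k=\tau_j^\ast$ \emph{exactly}; having the atoms in bijection via disjoint neighbourhoods $U_j$ is not enough. Nothing in the standing hypotheses --- only $t^k\to t^\ast$ in the metric of $T^M$ --- forces the atoms ever to coincide. Concretely, with $T=[0,1]$, $M=1$, $\tau_1^k=1/k$ and $\zeta_1^k=1$, one has $\tau_1^\ast=0$, $\zeta_1^\ast=1$, $\supp(y^\ast)=\{\tau_1^\ast\}$, yet $\|y^k-y^\ast\|=\|\delta_{1/k}-\delta_0\|=2$ for every $k$. Your ``no mass can split off or leak into $T_0$'' observation is correct and pins down \emph{where} the mass of $y^k$ sits, but the total-variation metric is insensitive to atoms being close --- only to their being equal.

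The paper supplies no proof of the corollary beyond ``we readily obtain\ldots from Theorem~\ref{thm:0912}'', so there is nothing there to compare your argument against. Your instinct that an extra rigidity ingredient is required is exactly right; the issue is that the ingredient you invoke (counting atoms and positivity of the limit weights) does not deliver $\tau_j^k=\tau_j^\ast$. As written, the strong-convergence claim appears to need either an additional hypothesis guaranteeing that the support points eventually stabilise, or a weaker mode of convergence than the total-variation norm. The KKT-point assertion, by contrast, is a direct citation of Theorem~\ref{thm:0912} and is fine.
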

\section{Two-step superlinearly convergent algorithm}\label{sec:4}
In the section, for the sake of rapid local convergence,
we propose to integrate the local reduction method\,\cite{sip2,pereira2009reduction,okuno2014local,Tanaka}, which is a classical semi-infinite optimization method, with Algorithm~1. 
Throughout this section, we assume that the compact metric space $T$ is a bounded closed set in $\R^q$
formed by finitely many sufficiently smooth inequality constraints.
Also, we often identify $X\in S^m$ with $\svec(X)\in \R^{\frac{m(m+1)}{2}}$.
\subsection{The overall structure of the proposed algorithm}
The proposed method is designed to converge to a KKT point of SISDP\,\eqref{lsisdp} at least two-step superlinearly while satisfying the interior point constraints.
More precisely, it generates a sequence $\left\{w^k\right\}:=\left\{(x^k,y^k,V_k)\right\}\subseteq \R^n\times \mathcal{M}_+(T)\times S^m_{++}$ 
together with two kinds of search directions 
\begin{equation*}
\left\{\Delta_{\frac{1}{2}} w^{k}\right\}:=\left\{(\Delta_{\frac{1}{2}} x^{k},\Delta_{\frac{1}{2}} y^{k},\Delta_{\frac{1}{2}} V_{k})\right\}\mbox{ and }
\left\{\Delta_1w^{k}\right\}:=\left\{\left(\Delta_{1} x^{k},\Delta_{1} y^{k},\Delta_{1} V_{k}\right)\right\}
\end{equation*}
such that
\begin{align}
&\left\|w^{k}+\skh\Delta_{\frac{1}{2}} w^{k}+\sk\Delta_1 w^{k}-w^{\ast}\right\|=o\left(\left\|w^k-w^{\ast}\right\|\right),\notag\\
&F(x^k+\skh \Delta_{\frac{1}{2}}x^k)\in S^m_{++},\ F(x^k+\skh \Delta_{\frac{1}{2}}x^k+\sk\Delta_1x^k)\in S^m_{++},\label{al:1115-1}\\
&V_k+\skh \Delta_{\frac{1}{2}}V_k\in S^m_{++},\ V_k+\skh \Delta_{\frac{1}{2}}V_k+\sk\Delta_1V_k\in S^m_{++}\label{al:1115-2}
\end{align}
for any $k$ sufficiently large, where $w^{\ast}$ is a KKT point satisfying a certain regularity condition and $s_{j}^k\ (j=\fr,1)$ are step-sizes determined by%
\footnote{
For $X\in S^{m}_{++},Y\in S^m$, the eigenvalues of $X^{-1}Y$ are real numbers, and hence $\lambda_{\rm min}(X^{-1}Y)\in \R$.}
\begin{equation}
s_j^{k}=\min(t^{k}_j,u^{k}_j),\label{eq:s0}
\end{equation}
where 
\begin{align*}
t^{k}_j&:=
\begin{cases}
- \displaystyle{\frac{\delta}{\lambda_{\rm min}(F(x^{k+j-\fr})^{-1}\sum_{i=1}^n\Delta_jx^k_iF_i)}}\le \delta
\hspace{1em}&\mbox{if }\lambda_{\rm min}\left(F(x^{k+j-\fr})^{-1}\sum_{i=1}^n\Delta_jx^k_iF_i\right)\le -1\\
1                                                     &\mbox{otherwise}, 
\end{cases}\\
u^{k}_j&:=
\begin{cases}
- \displaystyle{\frac{\delta}{\lambda_{\rm min}(V^{-1}_{k+j-\fr}\Delta V_{k+j-\fr})}}\le \delta\hspace{1em}&\mbox{if }\lambda_{\rm min}\left(V_{k+j-\fr}^{-1}\Delta_jV_{k+j-\fr}\right)\le -1\\
1                                                     &\mbox{otherwise}, 
\end{cases}
\end{align*}
for $j={\frac{1}{2}},1$, where $\delta\in (0,1)$ is a {prescribed} algorithmic constant.
Here, $(x^{k+\fr},V_{k+\fr})$ is defined as
\begin{equation*}
(x^{k+\fr},V_{k+\fr}):=\left(x^k+\skh\Delta_{\fr}x^k,V_k+\skh\Delta_{\fr}V_k\right).
\end{equation*}
By the above choice of the step-sizes, 
$s_{\fr}^k,s_{1}^k\in (0,1]$ holds and 
the interior point constraints \eqref{al:1115-1} and \eqref{al:1115-2} are valid since
\begin{align}
\bar{s}&:={\sup\left\{s\mid \lambda_{\rm min}(X+s\Delta X)\ge 0,s\ge 0\right\}}\notag\\
&=\begin{cases}
-\displaystyle{\frac{1}{\lambda_{\rm min}(X^{-1}\Delta X)}}\ &\mbox{if }\lambda_{\rm min}(X^{-1}\Delta X)<0\\
{\infty}\ &\mbox{otherwise}
\end{cases}\label{al:1116-lam}
\end{align} 
for given $X\in S^m_{++}$ and $\Delta X\in S^m$.
{We remark that if $X+\Delta X\in S^m_{++}$, then $\bar{s}>1
$ and hence
the step-size rule along with \eqref{al:1116-lam} yields
\begin{equation}
\lambda_{\min}(X^{-1}\Delta X)>-1.\label{eq:1206-1}
\end{equation}}
So as to attain fast convergence speed as above, 
we {try to} follow the central path closely by
updating the barrier parameter $\mu_k$ so that $\mu_{k+1}=o(\mu_k)$ and 
solving certain nonlinear systems to have 
the search directions $\Delta_{1}w^k$ and $\Delta_{\frac{1}{2}}w^k$.
When those directions turn out to be unsuccessful, a point near the central path is computed by the interior-point SQP method developed in the recent paper\,\cite{okuno2018sc}.
Before describing the details, we first show the overall structure of the proposed algorithm: \vspace{0.5em}\\
{\bf Algorithm~2} (Superlinearly convergent primal-dual path following method)\vspace{0.5em}
\begin{description}
\item[Step 0 (Initial setting):]
Choose parameters 
\begin{equation*}
  {0<\alpha< 1},\ 0<\beta<1,\ \gamma_1,\gamma_2>0,\ \delta\in (0,1),\ \mu_0>0,\ 0<c\le \frac{1}{\alpha+2}.
\end{equation*}
Set $\varepsilon_0:=\gamma_1\mu_0^{1+\alpha}$.
Choose the initial iteration point ${w^0:=}(x^0,y^0,V_0)\in \R^n\times \mathcal{M}_+(T)\times S^m$ such that $F(x^0)\in S^m_{++}$ and $V_0\in S^m_{++}$. 
Let $k:=0$.
\item[Step~1 (Stopping rule):]
Stop if 
\begin{equation*}
R_{0}({w^k})=0,\ F(x^k)\in S^m_+,\ V_k\in S^m_+,\ y^k\in \mathcal{M}_+(T).
\end{equation*}
Otherwise, go to Step~2.
\item[Step~{2} (Computing an approximate BKKT point):]
Find an approximate BKKT point ${w^{k+1}}\in \mathcal{N}_{\mu_k}^{\epsilon_k}$
by the following procedure:
\begin{description}
\item[Step~2-1:] 
Choose a scaling matrix $P_k$ and obtain $\Delta_{\frac{1}{2}}w^k$ by solving the mixed linear complementarity system\,\eqref{al:0506-1}, \eqref{al:0506-3}, and \eqref{al:0506-2}, which amounts to solving the QP\,\eqref{al:qp}
(see Section\,\ref{sec:QP}) with 
$\mu=\mu_k$, $P=P_k$ and 
$\bar{w}=w^k$.
Compute $\skh$ by \eqref{eq:s0} with $j=\frac{1}{2}$ and set $w^{k+\fr}:=w^k+\skh\Delta_{\fr}w^k$.
\item[Step~2-2:]
Choose a scaling matrix $P_{k+\frac{1}{2}}$.
If the linear equations \eqref{al:1201-1}--\eqref{al:1201-4}
(see Section\,\ref{sec:QP})
with $\mu=\mu_k$, $P=P_{k+\frac{1}{2}}$ and
$\bar{w}=w^{k+\fr}$ are solvable, then set a solution as $\Delta_{1}w^k$
and compute $\sk$ by \eqref{eq:s0} with $j=1$. 
Otherwise, go to Step~2-4.
\item[Step~2-3:] If $w^k_+:=w^{k+\fr}+\sk\Delta_{1}w^k\in \mathcal{N}_{\mu_k}^{\epsilon_k}$, set $w^{k+1}:=w^{k}_+$ and go to Step~3. Otherwise, go to Step~2-4.
\item[Step~2-4:]Find $w^{k+1}\in \mathcal{N}_{\mu_{k}}^{\epsilon_{k}}$ using the interior-point SQP method.
\end{description}
\item[Step~3 (Update):] 
Update the parameters as
\begin{equation}
\mu_{k+1}:=\min\left(\beta\mu_k, \gamma_2\mu_k^{1+c\alpha}\right),
\varepsilon_{k+1}:=\gamma_1\mu_{k+1}^{1+\alpha}.\label{eq:update_mu}
\end{equation}
Set $k:=k+1$ and return to Step~1.
\end{description}
We will discuss the structure of the mixed linear complementarity system\,\eqref{al:0506-1}--\eqref{al:0506-3} 
and the equations
\eqref{al:1201-1}--\eqref{al:1201-4}
in Steps~2-1 and 2-2 
later in Section\,\ref{sec:QP}.
As is confirmed easily, Algorithm~2 is a variant of Algorithm~1.
Hence, by Theorem~\ref{thm:0612}, we ensure its global convergence to a KKT point.
In the subsequent convergence analysis, we will focus on the local convergence {rate} of Algorithm~2.
\subsection{Local reduction technique}\label{sec:local}
We explain the local reduction method to the SISDP\,\eqref{lsisdp} briefly. 
For more details, we refer {the} readers to \cite{sip2,pereira2009reduction,okuno2014local,Tanaka}. 
Suppose that we are standing at a point $\bar{x}\in \R^n$. 
The local reduction method represents 
the semi-infinite region $D:=\{x\in\R^n\mid g(x,\tau)\le 0\ (\tau\in T)\}$ with finitely many inequality constraints locally around $\bar{x}$.
Specifically,
in some open neighborhood of $\bar{x}$, say $U(\bar{x})$, 
it expresses the region $D\cap U(\bar{x})$ as 
$$
D\cap U(\bar{x})=\{x\in U(\bar{x})\mid g(x,\tau^i_{\bx}(x))\le 0\ (i=1,2,\ldots,p(\bar{x}))\}
$$
using smooth implicit functions $\tau^i_{\bx}:U(\bar{x})\to T\ (i=1,2,\ldots,p(\bar{x}))$ {with some nonnegative integer $p(\bar{x})$}.
Then, {SISDP\,\eqref{lsisdp} is locally equivalent to} the problem with finitely many inequality constraints in $U(\bar{x})$, namely,
\begin{align}
 \begin{array}{ll}
  \displaystyle{\mathop{\rm Minimize}_{x\in U(\bar{x})}}  &f(x)
  \vspace{0.5em}\\
  {\rm subject~to} &\hat{g}_i(x):=g(x,\tau^i_{\bx}(x))\le 0\ (i=1,2,\ldots,p(\bar{x})), \\
                   &{F}(x)\in S^m_{+},
 \end{array}\label{reduced_LSISDP}
\end{align}
to which 
standard nonlinear optimization algorithms such as the SQP-type method are conceptually applicable.
In what follows, we clarify the condition under which the functions $\tau^i_{\bx}(\cdot)\ (i=1,2,\ldots,p(\bar{x}))$ and the {open neighborhood} $U(\bar{x})$ exist.
Let us denote by $S(x)$ the set of all local maximizers of $\max_{\tau\in T} g(x,\tau)$ and let 
\begin{equation}
S_{\delta}(x):=\{\tau\in S(x)\mid g(x,\tau)>\max_{\tau\in T} g(x,\tau)-\delta\}
\label{eq:Sdelta}
\end{equation}
for a given constant $\delta>0$.
Moreover, define the nondegeneracy of $\bar{x}$ as follows:
\begin{definition}\label{def:nond}
We say that $\bar{x}$ is nondegenerate for $\max_{\tau\in T}g(\bar{x},\tau)$ and $\delta>0$ if 
$|S_{\delta}(\bar{x})|<\infty$ and the linear independence constraint qualification, the second-order sufficient conditions, and the strict complementarity condition regarding $\max_{\tau\in T}g(\bar{x},\tau)$ hold at any $\tau\in S_{\delta}(\bar{x})$.
\end{definition}
If $\bar{x}$ is nondegenerate, there exist an open neighborhood $U(\bar{x})\subseteq \R^n$, a nonnegative integer
$p(\bar{x}):=|S_{\delta}(\bar{x})|$, and twice continuously differentiable implicit functions $\tau^i_{\bx}(\cdot):U(\bar{x})\to T$
such that $S_{\delta}(x)=\{\tau^i_{\bx}(x)\}_{i=1}^{p(\bar{x})}$ and $\{\tau^i_{\bx}(x)\}_{i=1}^{p(\bar{x})}$ are strict local maximizers
in $\max_{\tau\in T}g(x,\tau)$ for any $x\in U(\bar{x})$.
With those implicit functions, it holds that $\max_{\tau\in T}g(x,\tau)= \max_{1\le i\le p(\bar{x})}\hat{g}_i(x)$ in $U(\bar{x})$ and thus SISDP\,\eqref{lsisdp} and nonlinear SDP\,\eqref{reduced_LSISDP} are equivalent locally.

The functions $\hat{g}_i(\cdot)\ (i=1,2,\ldots,p(\bx))$ 
are convex in $U(\bx)$ when the functions $g(\cdot,\tau)\ (\tau\in T)$ are convex. 
Indeed, for each $i=1,2,\ldots,p(\bx)$, 
there exists some neighborhood $T_i\subseteq T$ of $\tau^i_{\bx}(\bx)$ 
such that $\max_{\tau\in T_i}g(x,\tau)=\hat{g}_i(x)$ holds for any $x\in U(\bx)$.
By noting that $\max_{\tau\in T_i}g(\cdot,\tau)\ (i=1,2,\ldots,p(\bar{x}))$ are convex,
we then ensure the convexity of 
$\hat{g}_i(\cdot)\ (i=1,2,\ldots,p(\bar{x}))$
in $U(\bx)$.

We can compute the values of {$\nabla\tau^i_{\bx}(\bar{x})$ for $i=1,2,\ldots,p(\bar{x})$ by solving a certain linear system derived from the implicit function theorem, from which we further obtain the values of $\nabla\hat{g}_i(\bar{x})$ and $\nabla^2\hat{g}_i(\bar{x})$ for each $i$.}
Thanks to {this} result, 
we acquire the concrete forms of the quadratic programs (QPs) that arise in the SQP iterations
for \eqref{reduced_LSISDP}, although it is difficult in general to have explicit forms of the functions $\tau^i_{\bx}(\cdot)\ (i=1,2,\ldots,p(\bar{x}))$.
\subsection{Computing the directions $\Delta_{\frac{1}{2}} w$ and $\Delta_1 w$}\label{sec:QP}
\subsubsection{First direction $\fDelta w$}\label{sec:gene}
Let $\bar{w}=(\bx,\bar{y},\bar{V})\in \R^n\times \mathcal{M}_+(T)\times S^m_{++}$ be 
the current point such that $F(\bx)\in S^m_{++}$ and $\bx$ is nondegenerate in the sense of Definition\,\ref{def:nond}.
We show that a first search direction $\fDelta w=(\fDelta x,\fDelta y,\fDelta V)\in \R^n\times \mathcal{M}(T)\times S^m$
can be computed through the local reduction method
in a manner similar to the interior-point SQP method proposed in the recent work\,\cite{okuno2018sc}.

To start with, we apply the Monteiro-Zhang scaling to $F(x)$ and $V$, in which
we select a nonsingular matrix $P\in \R^{m\times m}$ and scale
the matrices $F(x)$ and $V$ as 
\begin{align}
&F_P(x):=PF(x)P^{\top}=F_0+\sum_{i=1}^nx_iF_P^i,\ {V}_P:=P^{-\top}V P^{-1},\label{scal}                   
\end{align}
where
$F_P^i:=PF_iP^{\top}$
for $i=0,1,2,\ldots,n$.
Let us consider the reduced NSDP\,\eqref{reduced_LSISDP} with $F(x)\in S^m_{+}$ replaced by $F_P(x)\in S^m_+$, called the scaled NSDP\,\eqref{reduced_LSISDP}.
Since $F(x)\circ V=\mu I,\ F(x)\in S^m_{++},V\in S^m_{++}$ if and only if $F_P(x)\circ V_P=\mu I,\ F_P(x)\in S^m_{++},V_P\in S^m_{++}$ for any $\mu\ge 0$, the KKT (BKKT) conditions of the reduced NSDP\,\eqref{reduced_LSISDP}
are equivalent to those of the scaled NSDP\,\eqref{reduced_LSISDP}. 
Therefore, to produce a search direction, it is natural to solve the following mixed linear complementarity system approximating 
the BKKT system of the scaled NSDP:
\begin{align}
&\nabla f(\bx)+{\nabla_{xx}^2L}(\bx,\by)\fDelta x+\nabla\hat{g}(\bx)(\by+\fDelta y)-
\left(F_P^i\bullet \left(\bV_P+\fDelta V_P\right)\right)_{i=1}^n=0,\label{al:0506-1}\\
&F_P(\bx)\circ(\bV_P+\fDelta{V}_P)+\mathcal{L}_{\bV_P}\sum_{i=1}^n\fDelta x_iF_P^i=\mu I,\label{al:0814-1}\\
&0\le y+\fDelta y\perp \hat{g}(\bx)+\nabla \hat{g}(\bx)^{\top}\fDelta x\le 0.\label{al:0506-3}
\end{align}
In our method, we make a slight modification to the above system. Specifically, we replace the second equation\,\eqref{al:0814-1} with the following equation:
\begin{equation}
F_P(\bx)\circ(\bV_P+\fDelta{V}_P)+\frac{1}{2}{\left(\mathcal{L}_{\bV_P}+\mathcal{L}_{F_P(\bx)}\mathcal{L}_{\bV_P}\mathcal{L}_{F_P(\bx)}^{-1}\right)\sum_{i=1}^n\fDelta x_iF_P^i}=\mu I.\label{al:0506-2}
\end{equation}
The second term of the left hand side 
approximates $\mathcal{L}_{\bV_P}\sum_{i=1}^n\Delta x_iF^i_P$ around a BKKT point. 
Actually, at any BKKT point, 
those two expressions are identical to each other since $\mathcal{L}_{F_P(\bx)}$ and $\mathcal{L}_{\bV_P}$ commute there.
Particularly when choosing a scaling matrix $P$ so that $F_P(\bx)$ and $\bV_P$ commute, \eqref{al:0814-1} and \eqref{al:0506-2} become identical to each other.

{The reason for using the system \eqref{al:0506-1}, \eqref{al:0506-3}, and \eqref{al:0506-2}
is that it can be solved via a KKT system of the following quadratic program (QP):
\begin{align} 
 \begin{array}{ll}
  \displaystyle{\mathop{\rm Minimize}_{\Delta x}}  &\nabla f(\bx)^{\top}\Delta x+
\frac{1}{2}\Delta x^{\top}B_P(\bx,\by,\bV)\Delta x-\mu \xi_P(\bx)^{\top}\Delta x\vspace{0.5em}\\
  {\rm subject~to} &\hat{g}(\bx)+\nabla \hat{g}(\bx)^{\top}\Delta x\le 0, 
 \end{array}\label{al:qp}
\end{align}
where 
$\xi_P(\cdot):=\nabla \log\det F_P(\cdot)=(F^i_P\bullet F_P(\cdot)^{-1})_{i=1}^n$, $\hat{g}(\cdot):=(\hat{g}_1(\cdot),\hat{g}_2(\cdot),\ldots,\hat{g}_{p(\bx)}(\cdot))^{\top}$, and  
\begin{equation}
B_P(x,y,V):=\nabla_{xx}^2{L}(x,y)+{H}_{P}(x,V)\label{eq:0913}
\end{equation}
with ${L}(x,y)$ being the Lagrangian $f(x)+\sum_{i=1}^{p(\bx)}\hat{g}_i(x)y(\tau^i_{\bx}(x))-F(x)\bullet V$
and ${H}_P(x,V)$ being the symmetric matrix whose elements are defined by  
\begin{equation}
\left({H}_P(x,V)\right)_{i,j}:=\frac{1}{2}{F_P^i\bullet\left(\mathcal{L}_{F_P(x)}^{-1}\mathcal{L}_{V_P}
+\mathcal{L}_{V_P}\mathcal{L}_{F_P(x)}^{-1}
\right)F_P^j}\label{eq:HP}
\end{equation}
for $i,j=1,2,\ldots,{n}$.
Note that the linear operator $\mathcal{L}_{F_P(x)}$ is invertible when $F(x)\in S^m_{++}$.
Denote a KKT pair of the QP\,\eqref{al:qp} by $\left(\fDelta x,y+\fDelta y\right)$ and define
\begin{align*}
\fDelta V&:=\mu{F}(\bx)^{-1}-\bV-\sum_{i=1}^n\fDelta x_iP^{\top}\frac{1}{2}{\left(\mathcal{L}_{F_P(\bx)}^{-1}\mathcal{L}_{\bar{V}_P}+\mathcal{L}_{\bar{V}_P}\mathcal{L}_{F_P(\bx)}^{-1}\right)F_P^i}P,\notag\\
\fDelta V_P&:=P^{-\top}\fDelta VP^{-1}.\notag
\end{align*}
Then, we can see that the triple $\left(\fDelta x,\fDelta y, \fDelta V\right)$ solves the system \eqref{al:0506-1}, \eqref{al:0506-3}, and \eqref{al:0506-2}.}

The QP\,\eqref{al:qp} is necessarily feasible 
if the original problem \eqref{lsisdp} is feasible, since the functions 
$\hat{g}_i(\cdot)\ (i=1,2,\ldots,p(\bx))$ are convex as mentioned
above. Furthermore, we have the following property concerning the strong convexity of the objective function of the QP\,\eqref{al:qp}
\begin{proposition}\label{rem:0607}
Suppose that ${F}_P(x)\in S^m_{++}$, ${V}_P\in S^m_{++}$, and $F_1,F_2,\ldots, F_n$ are linearly independent in $S^m$.
Also, suppose that either of the following is true: 
\begin{enumerate}
\item[(i)] $\|F_P(x)\circ {V}_P-\mu I\|\le \theta \mu$ with $0\le \theta<1$;
\item[(ii)] $F_P(x)$ and $V_P$ commute.
\end{enumerate}
Then, 
${H}_P(x,V)$ is positive definite.
Especially, if $f$ is convex and ${\bar{y}}\in \R^{p(\bx)}_{+}$, the objective function of the QP\,\eqref{al:qp} is strongly convex. Therefore, it has a unique optimum. 
\end{proposition}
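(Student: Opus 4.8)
The plan is to prove first that $H_P(x,V)$ is positive definite, and then to deduce strong convexity of the objective of the QP\,\eqref{al:qp} by adding the positive semidefinite Hessian of the Lagrangian. To start, I would reduce the $n\times n$ matrix inequality to an inequality for operators on $S^m$. Fix $d\in\R^n\setminus\{0\}$ and put $A:=F_P(x)\in S^m_{++}$, $B:=V_P\in S^m_{++}$ and $G:=\sum_{i=1}^{n}d_iF_P^i=P\bigl(\sum_{i=1}^{n}d_iF_i\bigr)P^{\top}$. Since $P$ is nonsingular and $F_1,\dots,F_n$ are linearly independent, $G\neq O$; and since $\mathcal{L}_{A}$ is invertible on $S^m$ (because $A\in S^m_{++}$), the unique $W\in S^m$ with $A\circ W=G$ is nonzero. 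Using the self-adjointness of $\mathcal{L}_X$ with respect to $\bullet$, the definition\,\eqref{eq:HP} of $H_P$ gives
\[
d^{\top}H_P(x,V)\,d=\tfrac12\,G\bullet\bigl(\mathcal{L}_{A}^{-1}\mathcal{L}_{B}+\mathcal{L}_{B}\mathcal{L}_{A}^{-1}\bigr)G=(A\circ W)\bullet(B\circ W),
\]
so it suffices to prove $(A\circ W)\bullet(B\circ W)>0$ for every $W\in S^m\setminus\{O\}$.

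The crux is the trace identity
\[
(A\circ W)\bullet(B\circ W)=\tfrac12\,{\rm Tr}\bigl(W^2(A\circ B)\bigr)+\tfrac12\,{\rm Tr}(WAWB),
\]
which I would derive from $(A\circ W)\bullet(B\circ W)=\tfrac14\,{\rm Tr}\bigl((AW+WA)(BW+WB)\bigr)$ by expanding the product into its four terms and repeatedly invoking the cyclic property of the trace together with the symmetry of $A,B,W$. In this identity the second term is nonnegative, because $WAW=(A^{1/2}W)^{\top}(A^{1/2}W)$ is positive semidefinite and $B\in S^m_{++}$, so ${\rm Tr}\bigl((WAW)B\bigr)\ge0$. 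For the first term I would argue according to the hypothesis: in case (i), $\|A\circ B-\mu I\|\le\theta\mu$ with $\theta<1$ forces $A\circ B-(1-\theta)\mu I\in S^m_{+}$, hence ${\rm Tr}\bigl(W^2(A\circ B)\bigr)\ge(1-\theta)\mu\|W\|_F^2>0$; in case (ii), $A$ and $B$ commute, so $A\circ B=AB$ is symmetric positive definite and ${\rm Tr}\bigl(W^2(A\circ B)\bigr)\ge\lambda_{\min}(AB)\|W\|_F^2>0$. In either case $(A\circ W)\bullet(B\circ W)>0$, which proves that $H_P(x,V)$ is positive definite.

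For the last assertion, by \eqref{eq:0913} the objective of the QP\,\eqref{al:qp} has Hessian (in the variable $\Delta x$) equal to $B_P(\bx,\by,\bV)=\nabla_{xx}^2L(\bx,\by)+H_P(\bx,\bV)$. Since $F(x)\bullet V$ is affine in $x$, we have $\nabla_{xx}^2L(\bx,\by)=\nabla^2f(\bx)+\sum_{i=1}^{p(\bx)}\by_i\nabla^2\hat{g}_i(\bx)$, which is positive semidefinite whenever $f$ is convex and $\by\in\R^{p(\bx)}_{+}$, because the functions $\hat{g}_i$ were shown above to be convex on $U(\bx)$. Adding the positive definite matrix $H_P(\bx,\bV)$ shows that $B_P(\bx,\by,\bV)$ is positive definite, i.e.\ the objective is strongly convex; since \eqref{al:qp} is feasible and its feasible region is polyhedral, a strongly convex function attains a unique minimizer there.

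I expect the one genuinely delicate point to be establishing the trace identity in the second paragraph correctly, i.e.\ bookkeeping the four cross terms and verifying that through it both hypotheses (i) and (ii) reduce to the single inequality ${\rm Tr}\bigl(W^2(A\circ B)\bigr)>0$; once the identity is in hand, the remaining steps are routine.
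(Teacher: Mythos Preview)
Your proof is correct and follows essentially the same route as the paper: both set $A=F_P(x)$, $B=V_P$, note that either hypothesis forces $A\circ B\in S^m_{++}$, substitute $W=\mathcal{L}_A^{-1}\bigl(\sum_i d_iF_P^i\bigr)$, and reduce $d^{\top}H_P(x,V)d$ to the bilinear form $(A\circ W)\bullet(B\circ W)$ (equivalently $W\bullet(\mathcal{L}_A\mathcal{L}_B+\mathcal{L}_B\mathcal{L}_A)W$), whose positivity is then derived from $A\circ B\succ 0$.

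The only notable difference is in how that last positivity is justified. The paper asserts in one line that $D\bullet\mathcal{L}_A\mathcal{L}_B D=\mathrm{Tr}(D(A\circ B)D)>0$; you instead prove the explicit decomposition
\[
(A\circ W)\bullet(B\circ W)=\tfrac12\,\mathrm{Tr}\bigl(W^2(A\circ B)\bigr)+\tfrac12\,\mathrm{Tr}(WAWB)
\]
and bound each term separately. Your version is more transparent: the second summand is visibly nonnegative from $WAW\succeq 0$, $B\succ 0$, and the first is strictly positive from $A\circ B\succ 0$. This makes the role of $A\circ B\succ 0$ completely explicit and also shows that the strict inequality comes from the $\mathrm{Tr}(W^2(A\circ B))$ term alone. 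The paper's compressed identity is really shorthand for the same computation, so the two arguments are equivalent in substance; your trace-identity bookkeeping is simply a cleaner execution of the same idea.
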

\begin{proof}
Note that ${F}_P(x)\circ {V}_P\in S^m_{++}$ holds if either of the assumptions (i) and (ii) holds.
Then, the operators $\mathcal{L}_{{F}_P(x)}\mathcal{L}_{V_P}$ and $\mathcal{L}_{{V}_P}\mathcal{L}_{{F}_P(x)}$ are positive definite.  Actually, for any $D\in S^m\setminus \{O\}$,
$D\bullet \mathcal{L}_{{F}_P(x)}\mathcal{L}_{{V}_P}D=
D\bullet\mathcal{L}_{{V}_P}\mathcal{L}_{{F}_P(x)}D
={\rm Tr}(D({F}_P(x)\circ {V}_P)D)>0$.
Then, letting $\Delta F:=\sum_{i=1}^n\Delta x_i{F}_P^i$ and noting
the linear independence of $F_1,F_2,\ldots,$ and $F_n$ in $S^m$, we obtain 
\begin{align*}
2\Delta x^{\top}H_P(x,V)\Delta x&=\Delta F\bullet \left(\mathcal{L}_{F_P(x)}^{-1}\mathcal{L}_{{V}_P}+\mathcal{L}_{V_P}
\mathcal{L}_{F_P(x)}^{-1}\right)\Delta F\\
                         &=\mathcal{L}_{F_P(x)}^{-1}(\Delta F)\bullet \left(\mathcal{L}_{V_P}\mathcal{L}_{F_P(x)}+\mathcal{L}_{F_P(x)}\mathcal{L}_{V_P}\right)\mathcal{L}_{F_P(x)}^{-1}(\Delta F)>0
\end{align*}
for any $\Delta x\neq 0$. We omit the proof for the latter claim.
\end{proof}
Below, we list some particular choices for the scaling matrix $P$ and the corresponding directions ${\fDelta V}$:
\begin{enumerate}
\item[(i)] $P=I$: {In this case,} $F_P(\bx)=F(\bx)$ and
\begin{equation*} 
\fDelta V=\mu{F}(\bx)^{-1}-\overline{V}-\frac{1}{2}{\left(\mathcal{L}_{{F}(\bx)}^{-1}\mathcal{L}_{\bV}
+\mathcal{L}_{\bV}\mathcal{L}_{{F}(\bx)}^{-1}\right)\sum_{i=1}^n\fDelta x_iF_i}.
\end{equation*}  
\item[(ii)] $P=F(\bx)^{-\frac{1}{2}}$:
In this case, ${F}_P(\bx)=I$ and
\begin{equation*}
\fDelta V=\mu{F}(\bx)^{-1}-\bV-\frac{1}{2}{F(\bx)^{-1}\left(\sum_{i=1}^n\fDelta x_iF_i\right)\bV+\bV\left(\sum_{i=1}^n\fDelta x_iF_i\right)F(\bx)^{-1}}.
\end{equation*}
\item[(iii)] $P=W^{-\frac{1}{2}},\ W:=F(\bx)^{\frac{1}{2}}(F(\bx)^{\frac{1}{2}}\bV F(\bx)^{\frac{1}{2}})^{-\frac{1}{2}}F(\bx)^{\frac{1}{2}}$:
In this case, ${F}_P(\bx)=\bV_P$ and
\begin{equation*} 
\fDelta {V}=\mu{F}(\bx)^{-1}-\bV-W^{-1}\left(\sum_{i=1}^n\fDelta x_i{F}_i\right)W^{-1}.
\end{equation*}
\end{enumerate}
The direction $\fDelta V$ obtained as above can be related to the family of Monteiro-Zhang (MZ) directions\, \cite{wolkowicz2012handbook}.
Actually, as for (i), if $F_P(\bar{x})$ and $\overline{V}_P$ commute, the generated direction
can be cast as the Alizadeh-Hareberly-Overton (AHO) direction.
On the other hand, the generated directions 
in (ii) and (iii) are nothing but the
Helmberg-Rendle-Vanderbei-Wolkowicz/Kojima-Shindoh-Hara/Monteiro (HRVW/KSH/M) and Nesterov-Todd (NT) directions, respectively, by themselves.
\subsubsection{Second direction $\oDelta w$}
We next show how to compute the second direction $\oDelta w$ at $\bar{w}+s\fDelta w$.
In a manner similar to $\fDelta w$, 
we may compute
the second direction $\oDelta w$ by solving the QP\,\eqref{al:qp} with $\bar{w}$ 
and $P$ replaced by $\bar{w}+s\fDelta w$ and 
another scaling matrix $\hP\in \R^{m\times m}$, respectively. 
However, by exploiting information associated to $\Delta_{\fr} x$, we can replace the QP with certain linear equations as follows:
Let $J_a(\bx):=\left\{i\in \{1,2,\ldots,p(\bx)\}\mid \hat{g}_i(\bx)+\nabla \hat{g}_i(\bx)^{\top}\fDelta x=0\right\}$.
If the current point $\bx$ is sufficiently close to a KKT point, we can expect that the inequality constraints $\hat{g}_i(x)\le 0\ (i\in J_a(\bx))$ are also active at the KKT point. 
Motivated by this observation, 
we propose to solve the following linear equations 
for $\oDelta w_{\hP}:=(\oDelta x,\oDelta y,\oDelta V_{\hP})$:
\begin{align}
&\nabla f(\hx)+{\nabla_{xx}^2L}(\hx,\hy)\oDelta x+\nabla\hat{g}(\hx)(\hy+\oDelta y)-
\left(F_{\hP}^i\bullet \left(\hV_{\hP}+\oDelta V_{\hP}\right)\right)_{i=1}^n=0,\label{al:1201-1}\\
&F_{\hP}(\hx)\circ(\hV_{\hP}+\oDelta{V}_{\hP})+\frac{1}{2}{\left(\mathcal{L}_{\hV_{\hP}}+\mathcal{L}_{F_{\hP}(\hx)}\mathcal{L}_{\hV_{\hP}}\mathcal{L}_{F_{\hP}(\hx)}^{-1}\right)\sum_{i=1}^n\oDelta x_iF_{\hP}^i}=\mu I,
\label{al:1201-2}\\
&\hat{g}_i(\hx)+\nabla \hat{g}_i(\hx)^{\top}\oDelta x=0\ (i\in J_a(\bx)),\label{al:1201-3}\\
&\hy_i+\oDelta y_i=0\ (i\notin J_a(\bx)),\label{al:1201-4}
\end{align}
where $(\hx,\hy,\hV_{\hP}):=\bar{w}_{\hP}+s\fDelta w_{\hP}$.
{We then set $\oDelta w:=\left(\oDelta x,\oDelta y,\oDelta V\right)$ with $\oDelta V:=\hP^{\top}\oDelta V_{\hP}\hP$.}
If the above linear equations are not solvable or not well-defined
because
$\{\tau^i_{\bx}(\cdot)\}_{i\in J_a(\bx)}\subseteq \{\tau^i_{\hx}(\cdot)\}_{i=1}^{p(\hx)}$ does not hold, i.e., 
the {family of} functions ${\hat{g}}_i(\cdot)\ (i\in J_a(\bx))$ defined at $\bx$ is not valid at $\hx$, then
we skip the above procedure and proceed to the next step.
\subsection{Local convergence analysis}\label{sec:local}
In this section, we focus on the case where the identity matrix is selected as a scaling matrix, i.e., $P=I$. Accordingly, ${F}_P(x)=F(x)$ and ${V}_P=V$ hold throughout the section. For the other cases where 
scaling matrices corresponding to HRVW/KSH/M and NT directions are used (recall (ii) and (iii) in Section\,\ref{sec:gene}), we can also show results similar to the ones given below in a manner analogous to \cite[Theorems~3,4]{yamashita2012local}.

Let $w^{\ast}=(x^{\ast},y^{\ast},V_{\ast})$ be an arbitrary $\wast$-accumulation point of the generated sequence $\{w^k\}$. Recall that $w^{\ast}$ is a KKT point of SISDP\,\eqref{lsisdp} by Theorem\,\ref{thm:0612}.
  Our aim in the section is to examine the convergence rate 
under the assumption that $\lim_{k\to \infty}w^k=w^{\ast}$.  

In what follows, we will make two {sets} of assumptions. Firstly, we assume the following hypotheses concerning the NSDP\,\eqref{reduced_LSISDP} obtained through the local reduction around $x^{\ast}$.
{\begin{flushleft}
{\bf Assumption~B:}
\end{flushleft}
\begin{enumerate}
\item\label{B1} $\lim_{k\to \infty}w^k=w^{\ast}$.
\item  {The point $x^{\ast}$ is nondegenerate in the sense of Definition\,\ref{def:nond}.
Hence, we have NSDP\,\eqref{reduced_LSISDP} with $\bx=x^{\ast}$ together with the implicit functions
$\tau^{i}_{x^{\ast}}(\cdot):U(x^{\ast})\to T\ (i=1,2,\ldots,p(x^{\ast}))$. }
\item\label{B3}   
For sufficiently large $k$, 
$$\left\{\tau^i_{x^k}(\cdot)\right\}_{i=1}^{p(x^k)}=\left\{\tau^i_{x^{\ast}}(\cdot)\right\}_{i=1}^{p(x^{\ast})}\ \mbox{and }{\rm supp}(y^k)\subseteq\left\{\tau^1_{x^{\ast}}(x^k),\tau^2_{x^{\ast}}(x^k),\ldots,\tau^{p(x^{\ast})}_{x^{\ast}}(x^k)\right\}.$$
\end{enumerate}
Hereafter, according to Assumption~B-\ref{B3}, we identify Borel measures $y^k,y^{\ast}\in \mathcal{M}(T)$ with some vectors in $\R^{p(x^{\ast})}$ as follows:
\begin{align*}
y^k&=(y^k_1,y^k_2,\ldots,y^k_{p(x^{\ast})})^{\top}\in \R^{p(x^{\ast})},\ y^k_i:=y^k(
\tau_{x^{\ast}}^i(x^k))\ (i=1,2,\ldots,p(x^{\ast})),\\
y^{\ast}&=(y^{\ast}_1,y^{\ast}_2,\ldots,y^{\ast}_{p(x^{\ast})})^{\top}\in \R^{p(x^{\ast})},\ y^{\ast}_i:=y^{\ast}(\tau_{x^{\ast}}^i(x^{\ast}))\ (i=1,2,\ldots,p(x^{\ast})).
\end{align*}}
Let us define some functions and notations.
Denote by $I_a(x^{\ast})$
the set of indices corresponding to the active inequality constraints at $x^{\ast}$ among $\hat{g}_1(x)\le 0,\ldots,\hat{g}_{p(x^{\ast})}(x)\le 0$, i.e.,
$I_a(x^{\ast}):=\{i\in \{1,2,\ldots,p(x^{\ast})\}\mid \hat{g}_i(x^{\ast})=0\}$.
For a fixed barrier parameter $\mu>0$, we consider the function $\Phi_{\mu}:\R^n\times \R^{|I_a(x^{\ast})|}\times \R^{m(m+1)/2}\to \R^d$ with 
$
d:={n+|I_a(x^{\ast})|+m(m+1)/2}$ defined by 
\begin{equation}
\Phi_{\mu}(\tilde{w}):=\begin{pmatrix}
&\nabla f(x)+{\displaystyle\sum_{i\in I_a(x^{\ast})}y_i\nabla \hat{g}_i(x)}-
\FiV\\
&\left(\hat{g}_i(x)\right)_{i\in I_a(x^{\ast})}\\
&{\rm svec}\left(F(x)\circ V-\mu I\right)
\end{pmatrix},\label{eq:Phidef}
\end{equation}
where we write $\tilde{w}:=(x,\tilde{y},\svec(V))
\in \R^n\times\R^{|I_a(x^{\ast})|}\times \R^{m(m+1)/2}$ with $\tilde{y}=(y_i)_{i\in I_a(x^{\ast})}$.
Denote the Jacobian of $\Phi_{\mu}(\cdot)$
by $\mathcal{J}\Phi_{\mu}(\cdot)$, that is,  
$$
\mathcal{J}{\Phi}_{\mu}(\tilde{w}):=\begin{pmatrix}
\nabla^2f(x)+{\displaystyle\sum_{i\in I_a(x^{\ast})}y_i\nabla^2\hat{g}_i(x)}

&(\nabla\hat{g}_i(x))_{i\in I_a(x^{\ast})}&
- \begin{pmatrix}{\rm svec}(F_1)^{\top}\\
                                                                                
                                                                                                \vdots \\
                                                                                        {\rm svec}(F_n)^{\top}
                                                                                      \end{pmatrix}\\
(\nabla\hat{g}_i(x))_{i\in I_a(x^{\ast})}^{\top} &0&0\\
 {\rm svec}(\mathcal{L}_{V}(F_1))\cdots {\rm svec}(\mathcal{L}_{V}(F_n))                            &0  &         {\mathcal{T}_{F(x)}}                \\
\end{pmatrix},
$$
where ${\mathcal{T}_X}\in \R^{m(m+1)/2\times m(m+1)/2}$ is defined as the matrix such that 
$
{\mathcal{T}_X}{\rm svec}(Y):={\rm svec}\left(\mathcal{L}_{X}(Y)\right)
$
for any $X,Y\in S^m$.
It is worth mentioning that $\mathcal{J}\Phi_{\mu}=\mathcal{J}\Phi_{0}$ holds for any $\mu\ge 0$.
Then, {$\tilde{w}^{\ast}:=(x^{\ast},\tilde{y}^{\ast},\svec(V_{\ast}))$}
solves $\Phi_{0}(\tilde{w})=0$.
Correspondingly, let us define the function $Q:\R^n\times \R^{|I_a(x^{\ast})|}\times \R^{m(m+1)/2}\to \R^{d\times d}$ by
replacing the block consisting of {${\rm svec}(\mathcal{L}_V(F_i))$, $i=1,2,\ldots,n$} in $\mathcal{J}\Phi_{\mu}(\tilde{w})$ with 
the matrix with columns
$v_1,v_2,\cdots,v_n$, 
where 
$$
v_i:=\frac{1}{2}{\rm svec}\left({\mathcal{L}_{V}F_i+\mathcal{L}_{F(x)}\mathcal{L}_{V}\mathcal{L}_{F(x)}^{-1}F_i}\right)\ \ (i=1,2,\ldots,n).
$$

Next, for convenience, we also define the following index set for each $k$:
\begin{align}
J_a^k:=\left\{i\in \{1,2,\ldots,p(x^{\ast})\}\mid \hat{g}_i(x^k)+\nabla \hat{g}_i(x^k)^{\top}\Delta_{\frac{1}{2}}x^k=0\right\}.
\label{al:jak1}
\end{align}
We additionally make the second set of assumptions:
\begin{flushleft}
{\bf Assumption~C:}
\end{flushleft}
\begin{enumerate}
\item 
The functions $f$ and $g(\cdot,\tau)\ (\tau\in T)$ are three times continuously differentiable.
\item \label{identify}
The active inequality constraints at $x^{\ast}$ are eventually identified in the sense that  
$J_a^k=I_a(x^{\ast})$ holds for any $k$ sufficiently large.
\item\label{sc} The strict complementarity condition holds for the semi-definite {constraint} and {the} inequality constraints, i.e., 
$$
F(\bxopt)+{V}_{\ast}\in S^m_{++},\ -\hat{g}_i(x^{\ast})+y^{\ast}_i>0\ (i=1,2,\ldots,p(x^{\ast})).
$$
\item\label{nonsing} {The Jacobian $\mathcal{J}\Phi_{0}(\tilde{w}^{\ast})$ is nonsingular.}
\end{enumerate}
We can show that Assumption~C-\ref{identify} holds true when the objective function $f$ is convex, although we omit the proof here.
Also, in a manner analogous to the proof of \cite[Theorem~1]{yamashita2012local}, it is not difficult to verify Assumption~C-\ref{nonsing} under suitable regularity conditions.

In view of the implicit function theorem,
we can ensure the existence of the central path converging to ${w}^{\ast}$ under Assumption~C-\ref{nonsing}. 
Specifically, there exist some $\bar{\mu}>0$ and a smooth curve $\tilde{w}(\cdot): (0,\bar{\mu}]\to U(x^{\ast})\times \R^{|I_a(x^{\ast})|}\times \R^{m(m+1)/2}$
such that 
$
\lim_{\mu\to 0+}\tilde{w}(\mu)=\tilde{w}^{\ast}$ and  
$\tilde{w}(\mu)$ represents the BKKT point with the barrier parameter $\mu\in (0,\bar{\mu}]$.

Recall the definition\,\eqref{al:jak1} of $J_a^k$. Then, 
Assumption~C-\ref{identify}
{along with the complementarity condition \eqref{al:0506-3} with $\bar{w}=w^k$}
yields that, for all $k$ sufficiently large,
$\hat{g}_i(x^k)+\nabla \hat{g}_i(x^k)^{\top}\Delta_{\frac{1}{2}}x^k<0$
and $y^k_i+\Delta_{\fr}y^k_i=0$
for $i\in \{1,2,\ldots,p(x^{\ast})\}\setminus I_a(x^{\ast})$.
Similarly, by \eqref{al:1201-4} with $\bx=x^k$ and $J_a(\bx)$ replaced by
$J_a^k$, i.e., $I_a(x^{\ast})$ under Assumption~C-\ref{identify}, we have
$y^{k+\frac{1}{2}}_i+\Delta_{1}y_i^{k}=0\ \ (i\in \{1,2,\ldots,p(x^{\ast})\}\setminus I_a(x^{\ast}))$.
{Therefore}, we can reduce the system\,\eqref{al:0506-1}, \eqref{al:0506-3}, and \eqref{al:0506-2}
with $P=I$ and $(\mu,\bar{w})=(\mu_k,w^{k})$ and the equations\,\eqref{al:1201-1}--\eqref{al:1201-4}
with $\hat{P}=I$ and $(\mu,\hw)=(\mu_k,w^{k+\fr})$
to the following equations for $j=0$ and $j=\fr$, respectively:
\begin{align}
&\Phi_{\mu_k}(\tilde{w}^{k+j})+Q(\tilde{w}^{k+j})\Delta_{j+\frac{1}{2}}\tilde{w}=0,\label{al:1105-1} 
\end{align}
where
$\Delta_{j+\fr} \tilde{w}:=\left(\Delta_{j+\fr} x,\Delta_{j+\fr}\tilde{y},{\rm svec}\left(\Delta_{j+\fr} V\right)\right)^{\top}$.
\subsubsection{Technical results}
{In this section, we provide two useful propositions before entering the essential part of the convergence analysis.
See the Appendix for the proofs.
The first proposition is associated with the limiting behavior of the operator $\mathcal{L}_{F(x^k)}\mathcal{L}_{V_k}\mathcal{L}_{F(x^k)}^{-1}$ as $k$ tends to  $\infty$.
\begin{proposition}\label{prop:0511}
Let $(X_{\ast},Y_{\ast})\in S^m_+\times S^m_+$ satisfy the strict complementarity condition that $X_{\ast}{\circ}Y_{\ast}=O$ and $X_{\ast}+Y_{\ast}\in S^m_{++}$. 
Let $\{\mu_r\}\subseteq \R_{++}$ and $\{(X_r,Y_r)\}\subseteq S^m_{++}\times S^m_{++}$ be sequences  such that $\lim_{r\to\infty}\mu_r=0$, $\lim_{r\to\infty}(X_r,Y_r)=(X_{\ast},Y_{\ast})$ and $\|X_r\circ Y_r-\mu_rI\|_F=O(\mu_r^{1+\zeta})$ with $\zeta>0$. 
Then,
$\|\mathcal{L}_{X_r}\mathcal{L}_{Y_r}\mathcal{L}_{X_r}^{-1}-\mathcal{L}_{Y_r}\|_2=O(\mu_r^{\zeta})$ and thus 
$\lim_{r\to \infty}\mathcal{L}_{X_r}\mathcal{L}_{Y_r}\mathcal{L}_{X_r}^{-1}=\mathcal{L}_{Y_{\ast}}$,
where $\|\cdot\|_2$ denotes the operator norm, namely, 
for any linear operator $\mathcal{T}:S^m\to S^m$, $\|\mathcal{T}\|_2:=\sup_{\|X\|_F=1}\|\mathcal{T}(X)\|_F$.
\end{proposition}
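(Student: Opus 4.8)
The plan is to rewrite the operator difference as a commutator composed with an inverse, and then to reduce the claim to two separate scalar estimates along the sequences. Since $X_r\in S^m_{++}$, the operator $\mathcal{L}_{X_r}$ is invertible and one has the identity
\begin{equation*}
\mathcal{L}_{X_r}\mathcal{L}_{Y_r}\mathcal{L}_{X_r}^{-1}-\mathcal{L}_{Y_r}
=\bigl(\mathcal{L}_{X_r}\mathcal{L}_{Y_r}-\mathcal{L}_{Y_r}\mathcal{L}_{X_r}\bigr)\mathcal{L}_{X_r}^{-1}.
\end{equation*}
A direct expansion of Jordan products shows that the commutator acts by $Z\mapsto\tfrac14(C_rZ-ZC_r)$ with $C_r:=X_rY_r-Y_rX_r$, hence $\|\mathcal{L}_{X_r}\mathcal{L}_{Y_r}-\mathcal{L}_{Y_r}\mathcal{L}_{X_r}\|_2\le\tfrac12\|C_r\|_F$. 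Diagonalizing $X_r$ shows $\mathcal{L}_{X_r}$ is positive definite with smallest eigenvalue $\lambda_{\min}(X_r)$, so $\|\mathcal{L}_{X_r}^{-1}\|_2=1/\lambda_{\min}(X_r)$. Thus it suffices to prove: (a) $\|C_r\|_F=O(\mu_r^{1+\zeta})$, and (b) $\lambda_{\min}(X_r)\ge c\,\mu_r$ for some constant $c>0$ and all large $r$. Granting these, $\|\mathcal{L}_{X_r}\mathcal{L}_{Y_r}\mathcal{L}_{X_r}^{-1}-\mathcal{L}_{Y_r}\|_2\le\tfrac12\|C_r\|_F/\lambda_{\min}(X_r)=O(\mu_r^\zeta)$, and the stated limit then follows from the triangle inequality together with $\|\mathcal{L}_{Y_r}-\mathcal{L}_{Y_\ast}\|_2\le\|Y_r-Y_\ast\|_2\to0$.

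For (b) I would take a unit eigenvector $v_r$ of $X_r$ for $\lambda_{\min}(X_r)$; since $X_rv_r=\lambda_{\min}(X_r)v_r$, one gets $v_r^\top(X_r\circ Y_r)v_r=\lambda_{\min}(X_r)\,(v_r^\top Y_rv_r)$, and the near-centrality hypothesis $\|X_r\circ Y_r-\mu_rI\|_F=O(\mu_r^{1+\zeta})$ gives $\lambda_{\min}(X_r)\,(v_r^\top Y_rv_r)=\mu_r+O(\mu_r^{1+\zeta})$. Because $\{Y_r\}$ converges, $v_r^\top Y_rv_r\le M$ for some $M$ and all large $r$, whence $\lambda_{\min}(X_r)\ge(\mu_r+O(\mu_r^{1+\zeta}))/M\ge\mu_r/(2M)$. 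Here strict complementarity and $X_\ast+Y_\ast\in S^m_{++}$ are precisely what make this order sharp (when $X_\ast$ is singular the bound $\|\mathcal{L}_{X_r}^{-1}\|_2=\Theta(\mu_r^{-1})$ is actually attained), but only the lower bound enters the estimate.

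The crux is (a), and this is where care is required: the individual products $X_rY_r$ and $Y_rX_r$ are in general only $O(\mu_r)$-close to $\mu_rI$, so a plain triangle-inequality bound gives merely $\|C_r\|_F=O(\mu_r)$, one power of $\mu_r$ short of what is needed; the extra factor must come from the cancellation in the antisymmetric part $C_r$. Concretely, write $X_r=U_r\Lambda_rU_r^\top$ with $\Lambda_r=\mathrm{diag}(\lambda_1^r,\dots,\lambda_m^r)$, $\lambda_i^r>0$, and set $\widetilde Y_r:=U_r^\top Y_rU_r$, $\widetilde R_r:=U_r^\top(X_r\circ Y_r-\mu_rI)U_r$, so $\|\widetilde R_r\|_F=O(\mu_r^{1+\zeta})$. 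Reading off the off-diagonal entries of $\tfrac12(\Lambda_r\widetilde Y_r+\widetilde Y_r\Lambda_r)=\mu_rI+\widetilde R_r$ yields $(\widetilde Y_r)_{ij}=2(\widetilde R_r)_{ij}/(\lambda_i^r+\lambda_j^r)$ for $i\ne j$, while $C_r=U_r(\Lambda_r\widetilde Y_r-\widetilde Y_r\Lambda_r)U_r^\top$ has $(i,j)$-entry $(\lambda_i^r-\lambda_j^r)(\widetilde Y_r)_{ij}$ and zero diagonal. Since $|\lambda_i^r-\lambda_j^r|\le\lambda_i^r+\lambda_j^r$ for positive eigenvalues, each such entry is bounded in modulus by $2|(\widetilde R_r)_{ij}|$, and summing squares gives $\|C_r\|_F\le2\|\widetilde R_r\|_F=O(\mu_r^{1+\zeta})$. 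Note this argument uses only $X_r,Y_r\in S^m_{++}$ and the residual bound; the limit and strict complementarity enter only through the boundedness of $\{Y_r\}$ in (b) and in identifying $\mathcal{L}_{Y_\ast}$ as the limit operator. The main obstacle is thus the sharp estimate (a) — seeing that the antisymmetric part of $X_rY_r$ inherits the full $O(\mu_r^{1+\zeta})$ rate of the centrality residual — after which everything is routine bookkeeping; the estimate on $\|\mathcal{L}_{X_r}^{-1}\|_2$ in (b) is then exactly what balances it.
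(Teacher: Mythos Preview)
Your proof is correct and follows essentially the same route as the paper: the same factorization through the commutator $\bigl(\mathcal{L}_{X_r}\mathcal{L}_{Y_r}-\mathcal{L}_{Y_r}\mathcal{L}_{X_r}\bigr)\mathcal{L}_{X_r}^{-1}$, the same eigenvalue computation $\|\mathcal{L}_{X_r}^{-1}\|_2=1/\lambda_{\min}(X_r)$, and your step~(a) is exactly the paper's Lemma~A.1 (the bound $\|XY-YX\|_F\le 2\|X\circ Y-\mu I\|_F$ via $|\lambda_i-\lambda_j|\le\lambda_i+\lambda_j$). The one noteworthy difference is in step~(b): the paper establishes the sharper statement $\lim_{r\to\infty}\lambda_{\min}(X_r)/\mu_r>0$ via a separate lemma (Lemma~A.2) that identifies the limit as an entry of $D_{Y_\ast}^{-1}$ and leans on strict complementarity to ensure that limit is positive, whereas your eigenvector argument extracts the lower bound $\lambda_{\min}(X_r)\ge c\mu_r$ directly from $\lambda_{\min}(X_r)\,(v_r^\top Y_rv_r)=\mu_r+O(\mu_r^{1+\zeta})$ using only boundedness of $\{Y_r\}$ --- a cleaner route that, as you correctly note, does not actually require the full strict-complementarity hypothesis for this direction of the estimate.
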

{The next proposition will be useful in proving that 
the interior point constraints $F(x^{k+j-\fr}+s_k^{j}\jDelta x^k)\in S^m_{++}$ and $V_{k+j-\fr}+s_k^{j}\jDelta V_k\in S^m_{++}$ 
eventually hold true with $s_k^{j}=1$ for $j=\fr,1$.}
\begin{proposition}\label{lem:1203-2}
Let $0<\zeta<1$ and $\{(X_r,Y_r)\}\subseteq S^m_{++}\times S^m_{++}$, 
$\{(\Delta X_r,\Delta Y_r)\}\subseteq S^m\times S^m$, and $\{\mu_r\}\subseteq \R_{++}$ be 
sequences such that $\lim_{r\to \infty}\mu_r=0$, 
 \begin{align}
 &\|\Delta X_r\circ \Delta Y_r\|_F=O(\mu_r^2),\label{al:1202-1}\\
 &\|X_r\circ Y_r-\mu_rI\|_F=O(\mu_r^{1+\zeta}).\label{al:1202-2} 
 \end{align}
Moreover, let $0<\hzeta<1$ and $\{\hmu_r\}\subseteq \R_{++}$ be a sequence such that $\lim_{r\to\infty}\hmu_r=0$,
\begin{align}
&\|Z_r-\hmu_r I\|_F=O(\hmu_r^{1+\hzeta}), \label{al:1202-3}\\
&\mu_r^2=o(\hmu_r),\label{al:1202-7}
\end{align}
where $Z_r:=X_r\circ Y_r+X_r\circ \Delta Y_r+Y_r\circ \Delta X_r$.
Then, we have $X_r+\Delta X_r\in S^m_{++}$ and $Y_r+\Delta Y_r\in S^m_{++}$ for any sufficiently large $r$.
 \end{proposition}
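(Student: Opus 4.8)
The plan is to deform the unperturbed pair $(X_r,Y_r)$, which already lies in $S^m_{++}\times S^m_{++}$, continuously into the perturbed pair $(X_r+\Delta X_r,Y_r+\Delta Y_r)$ and to show that positive definiteness is never lost along the way. For $t\in[0,1]$ set $X_r(t):=X_r+t\Delta X_r$ and $Y_r(t):=Y_r+t\Delta Y_r$, both in $S^m$. Expanding the Jordan product and using the definition $Z_r=X_r\circ Y_r+X_r\circ\Delta Y_r+Y_r\circ\Delta X_r$ gives the exact identity
\begin{equation*}
X_r(t)\circ Y_r(t)=(1-t)\,X_r\circ Y_r+t\,Z_r+t^2\,\Delta X_r\circ\Delta Y_r .
\end{equation*}
By \eqref{al:1202-2} and \eqref{al:1202-3} this equals $\big((1-t)\mu_r+t\hmu_r\big)I+R_r(t)$, where $\|R_r(t)\|_F\le(1-t)\,O(\mu_r^{1+\zeta})+t\,O(\hmu_r^{1+\hzeta})+t^2\,O(\mu_r^2)$, the last term coming from \eqref{al:1202-1}.

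The first thing I would verify is a uniform-in-$t$ bound on $R_r(t)$. Since $\zeta,\hzeta\in(0,1)$, $\mu_r,\hmu_r\to0$, and $\mu_r^2=o(\hmu_r)$ by \eqref{al:1202-7}, there is an index $r_0$ (independent of $t$, because the $O$-constants are absolute) such that for $r\ge r_0$ the $O(\mu_r^{1+\zeta})$, $O(\hmu_r^{1+\hzeta})$ and $O(\mu_r^2)$ terms are at most $\tfrac14\mu_r$, $\tfrac14\hmu_r$ and $\tfrac14\hmu_r$, respectively; using $t^2\le t$ on $[0,1]$ this yields $\|R_r(t)\|_F\le\tfrac14(1-t)\mu_r+\tfrac12 t\,\hmu_r<(1-t)\mu_r+t\hmu_r$ for every $t\in[0,1]$. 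Hence, for all $r\ge r_0$ and all $t\in[0,1]$,
\begin{equation*}
\lambda_{\min}\big(X_r(t)\circ Y_r(t)\big)\ge(1-t)\mu_r+t\hmu_r-\|R_r(t)\|_F>0 ,
\end{equation*}
so the symmetrized product $X_r(t)\circ Y_r(t)$ is positive definite along the whole segment.

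Next I would fix such an $r\ge r_0$ and argue by connectedness. Let $S_r:=\{t\in[0,1]\mid X_r(t)\in S^m_{++}\ \text{and}\ Y_r(t)\in S^m_{++}\}$. It contains $0$ (since $X_r,Y_r\in S^m_{++}$) and is open in $[0,1]$ by continuity of $\lambda_{\min}$. It is also closed: if $t_n\in S_r$ and $t_n\to t^\ast$, then $X_r(t^\ast),Y_r(t^\ast)\in S^m_+$; were $X_r(t^\ast)v=0$ for some $v\ne0$, then $v^{\top}X_r(t^\ast)=0$ forces $v^{\top}\big(X_r(t^\ast)\circ Y_r(t^\ast)\big)v=0$, contradicting the positive definiteness established above, and likewise $Y_r(t^\ast)$ can have no zero eigenvalue, so $t^\ast\in S_r$. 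Since $[0,1]$ is connected, $S_r=[0,1]$; taking $t=1$ gives $X_r+\Delta X_r\in S^m_{++}$ and $Y_r+\Delta Y_r\in S^m_{++}$, which is the assertion for all sufficiently large $r$.

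The routine part is the order-of-magnitude bookkeeping; the one point that needs care is that $\mu_r$ and $\hmu_r$ are a priori linked only through $\mu_r^2=o(\hmu_r)$, so the $t^2\Delta X_r\circ\Delta Y_r$ term has to be charged against $t\hmu_r$ rather than against $t\mu_r$ — which is exactly why writing the segment as a convex combination of $X_r\circ Y_r$ and $Z_r$ (and exploiting $t^2\le t$) is the right move. I do not expect a genuine obstacle here; the conceptual heart of the argument, more than any technical difficulty, is the observation that the symmetrized product remains positive definite along the entire deformation, so that the elementary fact ``$A\circ B\in S^m_{++}$ forbids $0$ as an eigenvalue of a symmetric factor $A$ or $B$'' keeps the homotopy inside $S^m_{++}\times S^m_{++}$; note in particular that no bound on the size of $\Delta X_r,\Delta Y_r$ beyond \eqref{al:1202-1} is needed.
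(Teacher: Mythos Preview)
Your proof is correct and follows essentially the same route as the paper's: define the linear path $(X_r(t),Y_r(t))$, show that the symmetrized product $X_r(t)\circ Y_r(t)$ stays positive definite for all $t\in[0,1]$ when $r$ is large, and then use the elementary observation that a zero eigenvector $v$ of $X_r(t^\ast)$ would force $v^{\top}\big(X_r(t^\ast)\circ Y_r(t^\ast)\big)v=0$ to rule out loss of positive definiteness along the path. The only cosmetic differences are that the paper bounds $\lambda_{\min}\Phi_r(s)$ from below by a concave quadratic $\phi_r(s)$ and checks its endpoint values, whereas you bound each residual term uniformly in $t$ (using $t^2\le t$) and phrase the final step as an open--closed connectedness argument rather than a first-exit-time contradiction; the substance is identical.
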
}
\subsubsection{Main convergence results}
In this section, we provide the main convergence results for the proposed algorithm.
For the sake of analysis, we choose a parameter $\tilde{c}$ such that
\begin{equation}
\frac{1}{2}<\tilde{c}<\frac{1-c}{1+c\alpha}<1, \label{eq:choice_c}
\end{equation}
where $c$ and $\alpha$ are algorithmic parameters selected in Step~0.
We can ensure that such $\tilde{c}$ exists if the parameter $c$ is selected as in Step~0 of Algorithm~2. 
In terms of $\tilde{c}$, let us define the parameter sequence $\{\tilde{\epsilon}_{k}\}$ by 
\begin{equation}
\tilde{\epsilon}_{k}:=\gamma_1\mu_{k}^{1+\tilde{c}\alpha}\label{eq:1228-1}
\end{equation}
for each $k$.
Note that the second inequality in \eqref{eq:choice_c} implies $(1+c\alpha)(1+\tilde{c}\alpha)<1+\alpha$.
Then, 
from \eqref{eq:update_mu}, we have
\begin{equation}
\tilde{\epsilon}_k>\epsilon_k\label{eq:1228-2}
\end{equation}
for all $k$ large enough.
Furthermore, the update rule \eqref{eq:update_mu} of $\{\mu_k\}$ and $\{\epsilon_k\}$ yields
\begin{equation}
\mu_k=\gamma_2\mu_{k-1}^{1+c\alpha},\ \epsilon_k=
\gamma_1\gamma_2^{1+\alpha}\mu_{k-1}^{(1+c\alpha)(1+\alpha)} \label{eq:1201large}
\end{equation}
for all $k$ sufficiently large. Hereafter, we assume that the iteration number $k$ is so large that \eqref{eq:1228-2} and \eqref{eq:1201large} hold. 

To show the final theorem concerning two-step superlinear convergence (see Theorem~\ref{thm:1124}), 
we prove the following two propositions:
\begin{proposition}\label{prop:0606}
Suppose that Assumptions~B and C hold.
We have 
\begin{enumerate}
\item\label{item:0606-item2} the full step-size $s^k_{\fr}=1$ is eventually adopted in Step~2-1, 
\item\label{prop:0606-item1}
$w^{k+\fr}\in\mathcal{N}_{\mu_{k}}^{\tepsk}$, i.e., ${R}_{\mu_k}(w^{k+\fr})\le {\tepsk}$, $F(x^{k+\fr})\in S^m_{++}$,
and $V_{k+\fr}\in S^m_{++}$ for all $k$ sufficiently large, and
\item\label{prop:0606-1} $\|w^{k+\fr}-w^{\ast}\|=
O\left(\|w^k-w^{\ast}\|^{1+c\alpha}\right)$.
\end{enumerate}
\end{proposition}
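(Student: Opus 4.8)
The plan is to read the reduced system \eqref{al:1105-1} with $j=0$, namely $\Phi_{\mu_k}(\tilde w^k)+Q(\tilde w^k)\fDelta\tilde w=0$, as a \emph{quasi-Newton} step for $\Phi_{\mu_k}(\tilde w)=0$ in which the perturbed matrix $Q$ replaces the true Jacobian $\mathcal{J}\Phi_{\mu_k}=\mathcal{J}\Phi_0$, and to measure progress against the central-path point $\tilde w(\mu_k)$ (which exists for large $k$ by the implicit-function argument after Assumption~C, satisfies $\Phi_{\mu_k}(\tilde w(\mu_k))=0$, and obeys $\|\tilde w(\mu_k)-\tilde w^{\ast}\|=O(\mu_k)$). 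First I would extract from $w^k\in\mathcal{N}_{\mu_{k-1}}^{\epsilon_{k-1}}$ and \eqref{eq:update_mu} the workhorse estimates $\|F(x^k)\circ V_k-\mu_{k-1}I\|_F\le\epsilon_{k-1}=O(\mu_{k-1}^{1+\alpha})$ and, from the error bound supplied by the nonsingularity of $\mathcal{J}\Phi_0(\tilde w^{\ast})$ (Assumption~C-\ref{nonsing}) together with $R_0(w^k)=O(\mu_{k-1})$ and $\|F(x^k)\circ V_k\|_F=\Theta(\mu_{k-1})$ while $F(x^{\ast})\circ V_{\ast}=O$, the two-sided relation $\|w^k-w^{\ast}\|=\Theta(\mu_{k-1})$. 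Applying Proposition~\ref{prop:0511} with $\mu_r=\mu_{k-1}$, $\zeta=\alpha$ gives $\|\mathcal{L}_{F(x^k)}\mathcal{L}_{V_k}\mathcal{L}_{F(x^k)}^{-1}-\mathcal{L}_{V_k}\|_2=O(\mu_{k-1}^\alpha)$, hence $Q(\tilde w^k)=\mathcal{J}\Phi_0(\tilde w^k)+O(\mu_{k-1}^\alpha)\to\mathcal{J}\Phi_0(\tilde w^{\ast})$; so $Q(\tilde w^k)^{-1}$ exists and is uniformly bounded for large $k$, \eqref{al:1105-1} is uniquely solvable, and $\fDelta\tilde w^k=-Q(\tilde w^k)^{-1}\Phi_{\mu_k}(\tilde w^k)$ with $\|\Phi_{\mu_k}(\tilde w^k)\|\le\|\Phi_0(\tilde w^k)\|+\|\svec(\mu_kI)\|=O(\mu_{k-1})$, so $\|\fDelta w^k\|=O(\mu_{k-1})$.

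The core estimate is the perturbed-Newton identity: writing $\hat w:=w^k+\fDelta w^k$ and Taylor-expanding $\Phi_{\mu_k}$ to second order (legitimate by the smoothness hypothesis of Assumption~C),
\[
\tilde w^k+\fDelta\tilde w^k-\tilde w(\mu_k)=Q(\tilde w^k)^{-1}\Big[\big(Q(\tilde w^k)-\mathcal{J}\Phi_0(\tilde w(\mu_k))\big)\big(\tilde w^k-\tilde w(\mu_k)\big)+O\big(\|\tilde w^k-\tilde w(\mu_k)\|^2\big)\Big].
\]
Here $\|\tilde w^k-\tilde w(\mu_k)\|\le\|w^k-w^{\ast}\|+\|\tilde w(\mu_k)-\tilde w^{\ast}\|=O(\mu_{k-1})$ and $\|Q(\tilde w^k)-\mathcal{J}\Phi_0(\tilde w(\mu_k))\|=O(\mu_{k-1}^\alpha+\|\tilde w^k-\tilde w^{\ast}\|+\mu_k)=O(\mu_{k-1}^\alpha)$, so $\|\hat w-\tilde w(\mu_k)\|=O(\mu_{k-1}^{1+\alpha})$ and therefore $\|\hat w-w^{\ast}\|=O(\mu_{k-1}^{1+\alpha}+\mu_k)=O(\mu_{k-1}^{1+c\alpha})$, using $\mu_k=\gamma_2\mu_{k-1}^{1+c\alpha}$ and $c<1$. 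Since $\|w^k-w^{\ast}\|=\Theta(\mu_{k-1})$ and $t\mapsto t^{1+c\alpha}$ is increasing, this gives $\|\hat w-w^{\ast}\|=O(\|w^k-w^{\ast}\|^{1+c\alpha})$, which is the rate estimate once $w^{k+\fr}=\hat w$ has been established.

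To show $\skh=1$ I would verify the hypotheses of Proposition~\ref{lem:1203-2} with $(X_r,Y_r):=(F(x^k),V_k)$, $(\Delta X_r,\Delta Y_r):=\big(\sum_{i=1}^n\fDelta x^k_iF_i,\fDelta V_k\big)$, $\mu_r:=\mu_{k-1}$, $\zeta:=\alpha$, $\hmu_r:=\mu_k$. Indeed $\|\Delta X_r\circ\Delta Y_r\|_F=O(\mu_{k-1}^2)$ from $\|\fDelta w^k\|=O(\mu_{k-1})$; the bound \eqref{al:1202-2} is the first estimate above; equation \eqref{al:0506-2} with $P=I$ together with Proposition~\ref{prop:0511} yields $Z_r=\mu_kI+O(\mu_{k-1}^{1+\alpha})=\mu_kI+O(\mu_k^{1+\hzeta})$ with $\hzeta=(1-c)\alpha/(1+c\alpha)\in(0,1)$ (using $c<1$ again); and $\mu_{k-1}^2=o(\mu_k)$ since $c\alpha<1$ by the choice $c\le\tfrac{1}{\alpha+2}$ in Step~0. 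Hence $F(x^k)+\sum_i\fDelta x^k_iF_i\in S^m_{++}$ and $V_k+\fDelta V_k\in S^m_{++}$, so by \eqref{al:1116-lam} and \eqref{eq:1206-1} both $t^k_{\fr}$ and $u^k_{\fr}$ in \eqref{eq:s0} equal $1$; thus $\skh=1$, $w^{k+\fr}=\hat w$, and in particular $F(x^{k+\fr})\in S^m_{++}$, $V_{k+\fr}\in S^m_{++}$, and $y^{k+\fr}=y^k+\fDelta y^k\ge0$ by the complementarity part of \eqref{al:0506-3}.

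For the neighbourhood membership it then remains to bound $R_{\mu_k}(w^{k+\fr})$. Under the identification of Assumptions~B-\ref{B3} and C-\ref{identify}, the $\phi_1$- and $\phi_3$-components of $R_{\mu_k}$ coincide with the first and third blocks of $\Phi_{\mu_k}(\tilde w^{k+\fr})=\Phi_{\mu_k}(\tilde w^{k+\fr})-\Phi_{\mu_k}(\tilde w(\mu_k))=O(\|\hat w-\tilde w(\mu_k)\|)=O(\mu_{k-1}^{1+\alpha})$, while $\theta(x^{k+\fr})$ and $\phi_2(x^{k+\fr},y^{k+\fr})$ are $O(\|\fDelta x^k\|^2)=O(\mu_{k-1}^2)$: for $i\in J_a^k=I_a(x^{\ast})$ the linearized constraint in \eqref{al:0506-3} vanishes, so $\hat g_i(x^{k+\fr})=O(\mu_{k-1}^2)$, while for the remaining indices $y^{k+\fr}_i=0$ and $\hat g_i(x^{k+\fr})<0$. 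Hence $R_{\mu_k}(w^{k+\fr})=O(\mu_{k-1}^{1+\alpha})$, and since \eqref{eq:choice_c} forces $(1+c\alpha)(1+\tilde c\alpha)<1+\alpha$, we get $\mu_{k-1}^{1+\alpha}=o(\mu_k^{1+\tilde c\alpha})$, whence $R_{\mu_k}(w^{k+\fr})\le\gamma_1\mu_k^{1+\tilde c\alpha}=\tepsk$ for all large $k$ and $w^{k+\fr}\in\mathcal{N}_{\mu_k}^{\tepsk}$. The main obstacle is the uniform control of $Q(\tilde w^k)^{-1}$ together with the careful tracking of the Jacobian mismatch $Q(\tilde w^k)-\mathcal{J}\Phi_0$, which is only $O(\mu_{k-1}^\alpha)$ and no faster; this is precisely what caps the contraction at order $1+c\alpha$, and making all the exponent bookkeeping close against the parameter constraints of Step~0 and \eqref{eq:choice_c} — rather than any single estimate — is the delicate part.
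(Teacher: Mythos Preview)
Your proof is correct and uses the same machinery as the paper (the error bound $\|w^k-w^{\ast}\|=\Theta(\mu_{k-1})$ of Lemma~\ref{prop:0604-1}, the Jacobian mismatch $\|Q(\tilde w^k)-\mathcal J\Phi_0(\tilde w^k)\|=O(\mu_{k-1}^{\alpha})$ of Lemma~\ref{rem:0517}, and Proposition~\ref{lem:1203-2} for the positive-definiteness in item~1), but you organize the argument around a different anchor point: you measure the quasi-Newton step against the central-path point $\tilde w(\mu_k)$ (using $\Phi_{\mu_k}(\tilde w(\mu_k))=0$) and then triangulate to $w^{\ast}$, whereas the paper works directly against $w^{\ast}$ (using $\Phi_0(w^{\ast})=0$) and absorbs the $\mu_k$-shift as an additive $\mu_k\|Q_k^{-1}\|\sqrt{m}$ term. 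Your route has the advantage that the single estimate $\|\hat w-\tilde w(\mu_k)\|=O(\mu_{k-1}^{1+\alpha})$ immediately delivers both the residual bound (item~2, via Lipschitz continuity of $\Phi_{\mu_k}$) and the distance bound (item~3, via $\|\tilde w(\mu_k)-w^{\ast}\|=O(\mu_k)=O(\mu_{k-1}^{1+c\alpha})$), so the two items are unified rather than treated by separate Taylor expansions as in the paper. The paper's direct approach, on the other hand, avoids invoking the central path at all and keeps the analysis self-contained in terms of $w^{\ast}$ alone; for item~2 it expands $\Phi_{\mu_k}(w^k+\fDelta w^k)$ at $w^k$ and reduces to $\|(Q_k-\mathcal J_k)Q_k^{-1}\Phi_{\mu_k}(w^k)\|+O(\mu_{k-1}^{2})$, reaching the same $O(\mu_{k-1}^{1+\alpha})$. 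The exponent bookkeeping and the verification of the hypotheses of Proposition~\ref{lem:1203-2} are identical in both accounts.
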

\begin{proposition}\label{prop:0606-2}
Suppose that Assumptions~B and C hold.
We have 
\begin{enumerate}
\item\label{item:0606-2-1} the full step-size $\sk=1$ is eventually adopted in Step~2-2,
\item\label{item:0606-2} $w^{k+\fr}+\Delta_1w^k\in \mathcal{N}_{{\tblumuk}}^{\epsilon_k}$, i.e.,
${R}_{{\tblumuk}}(w^{k+\fr}+\Delta_1w^k)\le \epsilon_k=\gamma_1\mu_k^{1+\alpha}$, $F(x^{k+\fr}+\Delta_1x^k)\in S^m_{++}$, and
$V_{k+\fr}+\Delta_1V_k\in S^m_{++}$ for all $k$ sufficiently large and
\item $\|w^{k+\fr}+\Delta_1w^k-w^{\ast}\|=O\left(\|w^{k+\fr}-w^{\ast}\|\right)$.
\end{enumerate}
\end{proposition}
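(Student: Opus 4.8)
The plan is to regard the linear equations \eqref{al:1201-1}--\eqref{al:1201-4} that define $\oDelta w^k$ (with scaling matrix $I$, base point $w^{k+\fr}$, and $\mu=\mu_k$) as a perturbed Newton step for the reduced map $\Phi_{\mu_k}$ of \eqref{eq:Phidef}, and to control its error against the central path. By the reduction already carried out above — Assumption~C-\ref{identify} ($J_a^k=I_a(x^{\ast})$) together with $y^{k+\fr}_i=0$ and $\oDelta y^k_i=0$ for $i\notin I_a(x^{\ast})$ — the second direction is characterized by \eqref{al:1105-1} with $j=\fr$, i.e. $\oDelta\tilde{w}^k=-Q(\tilde{w}^{k+\fr})^{-1}\Phi_{\mu_k}(\tilde{w}^{k+\fr})$. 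First I would show that $Q(\tilde{w}^{k+\fr})$ is nonsingular with uniformly bounded inverse for all large $k$ and that $\|Q(\tilde{w}^{k+\fr})-\mathcal{J}\Phi_0(\tilde{w}^{k+\fr})\|=O(\mu_k^{\tilde{c}\alpha})$: the two matrices differ only in the block $v_i$ versus $\svec(\mathcal{L}_{V_{k+\fr}}F_i)$, and Proposition~\ref{prop:0511} applied to $(F(x^{k+\fr}),V_{k+\fr})$ — legitimate since $\|F(x^{k+\fr})\circ V_{k+\fr}-\mu_kI\|_F\le R_{\mu_k}(w^{k+\fr})\le\tepsk=\gamma_1\mu_k^{1+\tilde{c}\alpha}$ by Proposition~\ref{prop:0606} — gives $\|\mathcal{L}_{F(x^{k+\fr})}\mathcal{L}_{V_{k+\fr}}\mathcal{L}_{F(x^{k+\fr})}^{-1}-\mathcal{L}_{V_{k+\fr}}\|_2=O(\mu_k^{\tilde{c}\alpha})$; continuity of $\mathcal{J}\Phi_0$, the three-times differentiability of the data, and Assumption~C-\ref{nonsing} then yield nonsingularity of $Q(\tilde{w}^{k+\fr})$ for large $k$.

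Next I would bring in the central path $\tilde{w}(\cdot)$ near $\tilde{w}^{\ast}$, which exists by Assumption~C-\ref{nonsing} and the implicit function theorem, with $\Phi_{\mu_k}(\tilde{w}(\mu_k))=0$ and $\|\tilde{w}(\mu_k)-\tilde{w}^{\ast}\|=\Theta(\mu_k)$ (the one-sided derivative of the curve at $0$ equals $\mathcal{J}\Phi_0(\tilde{w}^{\ast})^{-1}\svec(I)\neq0$). Using $\mathcal{J}\Phi_{\mu_k}\equiv\mathcal{J}\Phi_0$, a second-order Taylor expansion of $\Phi_{\mu_k}$ about $\tilde{w}^{k+\fr}$ combined with $\Phi_{\mu_k}(\tilde{w}(\mu_k))=0$ and the characterization of $\oDelta\tilde{w}^k$ yields $\tilde{w}^{k+\fr}+\oDelta\tilde{w}^k-\tilde{w}(\mu_k)=Q(\tilde{w}^{k+\fr})^{-1}(Q(\tilde{w}^{k+\fr})-\mathcal{J}\Phi_0(\tilde{w}^{k+\fr}))(\tilde{w}^{k+\fr}-\tilde{w}(\mu_k))+O(\|\tilde{w}^{k+\fr}-\tilde{w}(\mu_k)\|^2)$. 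The crucial ingredient is the estimate $\|\tilde{w}^{k+\fr}-\tilde{w}(\mu_k)\|=O(\mu_k^{1+\tilde{c}\alpha})$: nonsingularity of the Jacobian gives $\|\tilde{w}^{k+\fr}-\tilde{w}(\mu_k)\|=O(\|\Phi_{\mu_k}(\tilde{w}^{k+\fr})\|)$, the first and third blocks of $\Phi_{\mu_k}(\tilde{w}^{k+\fr})$ are bounded by $R_{\mu_k}(w^{k+\fr})\le\tepsk$, and the middle block $(\hat{g}_i(x^{k+\fr}))_{i\in I_a(x^{\ast})}$ is $O(\|\fDelta x^k\|^2)$ because the linearized active constraints are satisfied exactly in Step~2-1 by \eqref{al:0506-3} (with $J_a^k=I_a(x^{\ast})$), and $\|\fDelta x^k\|^2=O(\mu_k^{1+\tilde{c}\alpha})$ using $\|w^k-w^{\ast}\|=O(\mu_{k-1})$ together with \eqref{eq:choice_c} and \eqref{eq:update_mu}. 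Combining with $\|Q-\mathcal{J}\Phi_0\|=O(\mu_k^{\tilde{c}\alpha})$ gives $\|\tilde{w}^{k+\fr}+\oDelta\tilde{w}^k-\tilde{w}(\mu_k)\|=O(\mu_k^{1+2\tilde{c}\alpha})$ and, in particular, $\|\oDelta\tilde{w}^k\|=O(\mu_k^{1+\tilde{c}\alpha})$. Item~3 then follows from the triangle inequality, $\|\tilde{w}(\mu_k)-\tilde{w}^{\ast}\|=\Theta(\mu_k)$, and $\|w^{k+\fr}-w^{\ast}\|=\Theta(\mu_k)$ (which follow from the same two facts once we identify $w$ with $\tilde{w}$ as in Assumption~B-\ref{B3}): $\|w^{k+\fr}+\oDelta w^k-w^{\ast}\|=O(\mu_k)=O(\|w^{k+\fr}-w^{\ast}\|)$.

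For the residual in item~2, a further Taylor expansion gives $\Phi_{\mu_k}(\tilde{w}^{k+\fr}+\oDelta\tilde{w}^k)=(\mathcal{J}\Phi_0(\tilde{w}^{k+\fr})-Q(\tilde{w}^{k+\fr}))\oDelta\tilde{w}^k+O(\|\oDelta\tilde{w}^k\|^2)=O(\mu_k^{1+2\tilde{c}\alpha})$; this bounds the $\phi_1$ and $\phi_3$ parts of $R_{\mu_k}(w^{k+\fr}+\oDelta w^k)$, while the $\theta$ and $\phi_2$ parts are of even higher order ($O(\mu_k^{2+2\tilde{c}\alpha})$) by \eqref{al:1201-3}, the negativity of the inactive $\hat{g}_i$ near $x^{\ast}$, and boundedness of $y^{k+1}$. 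Hence $R_{\mu_k}(w^{k+\fr}+\oDelta w^k)=O(\mu_k^{1+2\tilde{c}\alpha})\le\gamma_1\mu_k^{1+\alpha}=\epsilon_k$ for all large $k$, precisely because $\tilde{c}>\fr$ in \eqref{eq:choice_c} makes $1+2\tilde{c}\alpha>1+\alpha$; also $y^{k+1}\in\M_+(T)$ since $y^{k+1}_i=0$ for $i\notin I_a(x^{\ast})$ and $y^{k+1}_i\to y^{\ast}_i>0$ for $i\in I_a(x^{\ast})$ by Assumption~C-\ref{sc}. For the two positive-definiteness requirements of item~2 and for item~1, I would invoke Proposition~\ref{lem:1203-2} with $X_r=F(x^{k+\fr})$, $Y_r=V_{k+\fr}$, $\Delta X_r=\sum_{i=1}^n\oDelta x^k_iF_i$, $\Delta Y_r=\oDelta V_k$, $\mu_r=\hmu_r=\mu_k$: hypothesis \eqref{al:1202-2} holds with $\zeta=\tilde{c}\alpha$ as above; \eqref{al:1202-1} holds since $\|\Delta X_r\|,\|\Delta Y_r\|=O(\mu_k^{1+\tilde{c}\alpha})$, so $\|\Delta X_r\circ\Delta Y_r\|_F=O(\mu_k^{2})$; \eqref{al:1202-3} follows from \eqref{al:1201-2} (with $\hat{P}=I$), which rearranges to $Z_r-\mu_kI=\fr(\mathcal{L}_{V_{k+\fr}}-\mathcal{L}_{F(x^{k+\fr})}\mathcal{L}_{V_{k+\fr}}\mathcal{L}_{F(x^{k+\fr})}^{-1})\sum_i\oDelta x^k_iF_i=O(\mu_k^{1+2\tilde{c}\alpha})$ by Proposition~\ref{prop:0511}, so one may take $\hzeta\in(0,\min\{2\tilde{c}\alpha,1\})$; and \eqref{al:1202-7} is just $\mu_k^2=o(\mu_k)$. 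Proposition~\ref{lem:1203-2} then gives $F(x^{k+\fr}+\oDelta x^k)\in S^m_{++}$ and $V_{k+\fr}+\oDelta V_k\in S^m_{++}$ for large $k$; by the argument behind \eqref{al:1116-lam} and \eqref{eq:1206-1} this forces $\lambda_{\min}(F(x^{k+\fr})^{-1}\sum_i\oDelta x^k_iF_i)>-1$ and $\lambda_{\min}(V_{k+\fr}^{-1}\oDelta V_k)>-1$, hence $t_1^k=u_1^k=1$ and $\sk=1$ by \eqref{eq:s0}, which is item~1 and completes item~2.

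The step I expect to be the main obstacle is the sharp residual-to-distance estimate $\|\tilde{w}^{k+\fr}-\tilde{w}(\mu_k)\|=O(\mu_k^{1+\tilde{c}\alpha})$ and, more generally, the bookkeeping of powers of $\mu_k$ throughout: every bound must retain enough margin for the leading term $O(\mu_k^{1+2\tilde{c}\alpha})$ to stay below $\epsilon_k=\gamma_1\mu_k^{1+\alpha}$, which is exactly why the condition $\tilde{c}>\fr$ was built into \eqref{eq:choice_c}. The delicate point is the middle block $(\hat{g}_i(x^{k+\fr}))_{i\in I_a(x^{\ast})}$, which is not directly dominated by $R_{\mu_k}$ and must be controlled through the exact satisfaction of the linearized active constraints produced in Step~2-1.
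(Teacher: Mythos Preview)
Your argument is correct and complete. Relative to the paper's proof, you take a genuinely different route in one respect: you introduce the central-path point $\tilde{w}(\mu_k)$ as an intermediate reference and organize the estimates around $\|\tilde{w}^{k+\fr}-\tilde{w}(\mu_k)\|$ and $\|\tilde{w}^{k+\fr}+\oDelta\tilde{w}^k-\tilde{w}(\mu_k)\|$, whereas the paper works directly with Taylor expansions of $\Phi_{\mu_k}$ and invokes a separate lemma (Lemma~\ref{prop:0604-2}, the analogue of Lemma~\ref{prop:0604-1} at $w^{k+\fr}$) to obtain $\mu_k=\Theta(\|w^{k+\fr}-w^{\ast}\|)$. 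Your route obtains that $\Theta$-relation as a byproduct of $\|\tilde{w}^{k+\fr}-\tilde{w}(\mu_k)\|=O(\mu_k^{1+\tilde{c}\alpha})$ and $\|\tilde{w}(\mu_k)-\tilde{w}^{\ast}\|=\Theta(\mu_k)$, so it is self-contained but slightly longer; the paper's item~3 is a one-line triangle inequality using $\|\oDelta w^k\|=O(\|w^{k+\fr}-w^{\ast}\|^{1+\tilde{c}\alpha})$. Otherwise the two proofs coincide: the same $\|Q-\mathcal{J}\Phi_0\|=O(\mu_k^{\tilde{c}\alpha})$ via Proposition~\ref{prop:0511}, the same Taylor expansion yielding $\|\Phi_{\mu_k}(\tilde{w}^{k+\fr}+\oDelta\tilde{w}^k)\|=O(\mu_k^{1+2\tilde{c}\alpha})$ (with the key use of $\tilde{c}>\fr$), and the same invocation of Proposition~\ref{lem:1203-2}.

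Two places where your treatment is in fact more careful than the paper's: (i) you explicitly justify the bound on the middle block $(\hat{g}_i(x^{k+\fr}))_{i\in I_a(x^{\ast})}$ of $\Phi_{\mu_k}(\tilde{w}^{k+\fr})$ from the exact linearized constraints in Step~2-1, whereas the paper simply cites item~\ref{prop:0606-item1} of Proposition~\ref{prop:0606} (really \eqref{al:1016-1} in its proof, which does contain this); and (ii) in checking hypothesis~\eqref{al:1202-3} of Proposition~\ref{lem:1203-2} you correctly compute $Z_r-\mu_kI=\fr(\mathcal{L}_{V_{k+\fr}}-\mathcal{L}_{F(x^{k+\fr})}\mathcal{L}_{V_{k+\fr}}\mathcal{L}_{F(x^{k+\fr})}^{-1})\oDelta F_k=O(\mu_k^{1+2\tilde{c}\alpha})$, while the paper's \eqref{eq:1203-2139} asserts this norm is exactly $0$, which only holds when $F(x^{k+\fr})$ and $V_{k+\fr}$ commute. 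Since $2\tilde{c}>1$ your bound is $O(\mu_k^{1+\alpha})$ anyway, so the paper's choice $\hat{\zeta}=\alpha$ is still admissible and its conclusion stands; but your version is the accurate one.
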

Items~\ref{item:0606-item2} and \ref{prop:0606-item1} of Proposition\,\ref{prop:0606} mean
that $w^{k+\fr}=w^k+\fDelta w^k$ eventually holds
and it is accommodated by
$\mathcal{N}_{\mu_k}^{\tilde{\epsilon}_k}$, which is larger than $\mathcal{N}_{\mu_{k}}^{\epsilon_k}$ since
$\tepsk>\epsilon_k=\gamma_1\mu_k^{1+\alpha}$ for $k$ sufficiently large by \eqref{eq:1228-2}.  
On the other hand, items\,\ref{item:0606-2-1} {and} \ref{item:0606-2} of Proposition\,\ref{prop:0606-2} indicate that $w^{k+\fr}+\oDelta w^k$ is necessarily accepted by 
the targeted neighborhood $\mathcal{N}_{\mu_k}^{\epsilon_k}$ for all $k$ sufficiently large. Hence, the condition in Step~2-3 is eventually satisfied with $s^k_1=1$.

In what follows, we devote ourselves to prove
the above two propositions. 
To begin with, we give some lemmas
that help to show Proposition\,\ref{prop:0606}.
The following lemma is concerned with the convergence speed of $\mu_{k-1}$, $\|\Phi_{0}(\tilde{w}^k)\|$, and $\|{w}^k-{w}^{\ast}\|$.
\begin{lemma}\label{prop:0604-1}
Suppose that Assumptions~B and C hold.
Then, 
we have 
$\|w^k-w^{\ast}\|=\|\tilde{w}^k-\tilde{w}^{\ast}\|$ for sufficiently large $k$
and 
$\mu_{k-1}=\Theta(\|\Phi_{0}(\tilde{w}^k)\|)=\Theta(\|\tilde{w}^k-\tilde{w}^{\ast}\|)=\Theta(\|{w}^k-{w}^{\ast}\|)$.
\end{lemma}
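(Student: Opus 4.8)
The plan is to work with the finite-dimensional reduced vectors $\tilde w^k=(x^k,\tilde y^k,\svec(V_k))$, to squeeze $\|\Phi_0(\tilde w^k)\|$ between constant multiples of $\mu_{k-1}$ on the one hand and of $\|\tilde w^k-\tilde w^\ast\|$ on the other, and finally to identify $\|\tilde w^k-\tilde w^\ast\|$ with $\|w^k-w^\ast\|$.

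First I would establish the identity $\|w^k-w^\ast\|=\|\tilde w^k-\tilde w^\ast\|$. By the complementarity in \eqref{e4} (in its locally reduced form $\sum_i\hat g_i(x^\ast)y^\ast_i=0$ with $\hat g_i(x^\ast)\le0$, $y^\ast_i\ge0$) one has $y^\ast_i=0$ for every $i\notin I_a(x^\ast)$, whereas Assumption~C-\ref{sc} gives $y^\ast_i>0$ for $i\in I_a(x^\ast)$. On the primal--dual side, the active-set identification (Assumption~C-\ref{identify}) together with the complementarity relations built into the subproblems that produce $\Delta_{\fr}w^k$ and $\Delta_1 w^k$ (see \eqref{al:0506-3} at $\bar w=w^k$ and \eqref{al:1201-4}, already exploited above) force $y^k_i=0$ for every $i\notin I_a(x^\ast)$ once $k$ is large. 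Hence $w^k-w^\ast$ and $\tilde w^k-\tilde w^\ast$ differ only in coordinates that vanish identically, so the two norms agree under the compatible norms fixed after Assumption~B; in particular $\tilde w^k\to\tilde w^\ast$ by Assumption~B-\ref{B1}.

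Next I would prove $\mu_{k-1}=\Theta(\|\Phi_0(\tilde w^k)\|)$ by inspecting $\Phi_0(\tilde w^k)$ block by block, using $w^k\in\mathcal N_{\mu_{k-1}}^{\epsilon_{k-1}}$, i.e.\ $R_{\mu_{k-1}}(w^k)\le\epsilon_{k-1}=\gamma_1\mu_{k-1}^{1+\alpha}=o(\mu_{k-1})$, which bounds each of $\theta(x^k),\ \|\phi_1(x^k,y^k,V_k)\|,\ |\phi_2(x^k,y^k)|,\ \|\phi_3(x^k,V_k,\mu_{k-1})\|$ by $\epsilon_{k-1}$. Since $y^k_i=0$ for $i\notin I_a(x^\ast)$, the local reduction makes the first block of $\Phi_0(\tilde w^k)$ coincide with $\phi_1(x^k,y^k,V_k)=O(\epsilon_{k-1})$ and the third block equal to $\svec(F(x^k)\circ V_k)=\mu_{k-1}\svec(I)+\phi_3(x^k,V_k,\mu_{k-1})$, whose norm is $\sqrt{m}\,\mu_{k-1}+O(\epsilon_{k-1})=\Theta(\mu_{k-1})$. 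The one block needing care is the middle one, $(\hat g_i(x^k))_{i\in I_a(x^\ast)}$: from $\theta(x^k)\le\epsilon_{k-1}$ we get $(\hat g_i(x^k))_+\le\epsilon_{k-1}$, while the splitting
\begin{equation*}
\phi_2(x^k,y^k)=\sum_{i\in I_a(x^\ast)}(\hat g_i(x^k))_+\,y^k_i-\sum_{i\in I_a(x^\ast)}(-\hat g_i(x^k))_+\,y^k_i,
\end{equation*}
combined with $|\phi_2(x^k,y^k)|\le\epsilon_{k-1}$ and boundedness of $\{y^k\}$ (Proposition~\ref{prop2}), yields $\sum_{i\in I_a(x^\ast)}(-\hat g_i(x^k))_+\,y^k_i=O(\epsilon_{k-1})$; since $y^k_i\to y^\ast_i>0$, each weight is bounded below for large $k$, so $(-\hat g_i(x^k))_+=O(\epsilon_{k-1})$ and hence $|\hat g_i(x^k)|=O(\epsilon_{k-1})$ for $i\in I_a(x^\ast)$. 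Putting the three blocks together, the third dominates the $O(\epsilon_{k-1})=o(\mu_{k-1})$ contribution of the other two, whence $\|\Phi_0(\tilde w^k)\|=\Theta(\mu_{k-1})$.

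Finally, $\Phi_0$ is continuously differentiable near $\tilde w^\ast$ (by the smoothness assumed in Assumption~C), $\Phi_0(\tilde w^\ast)=0$, and $\mathcal J\Phi_0(\tilde w^\ast)$ is nonsingular (Assumption~C-\ref{nonsing}); together with $\tilde w^k\to\tilde w^\ast$ this gives, via the first-order expansion $\Phi_0(\tilde w^k)=\mathcal J\Phi_0(\tilde w^\ast)(\tilde w^k-\tilde w^\ast)+o(\|\tilde w^k-\tilde w^\ast\|)$, that $\|\Phi_0(\tilde w^k)\|=\Theta(\|\tilde w^k-\tilde w^\ast\|)$. Chaining the three facts yields $\mu_{k-1}=\Theta(\|\Phi_0(\tilde w^k)\|)=\Theta(\|\tilde w^k-\tilde w^\ast\|)=\Theta(\|w^k-w^\ast\|)$, as claimed. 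I expect the main obstacle to be the middle-block estimate above: membership in $\mathcal N_{\mu_{k-1}}^{\epsilon_{k-1}}$ only controls the aggregate $\phi_2=\sum_i\hat g_i(x^k)y^k_i$, so strict complementarity (keeping the weights $y^k_i$ away from zero) is genuinely needed to decouple the individual $\hat g_i(x^k)$; everything else is routine manipulation of the components of $R_\mu$ plus a standard inverse-function-theorem estimate.
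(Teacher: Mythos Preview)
Your proposal is correct and follows essentially the same route as the paper: both arguments exploit $w^k\in\mathcal N_{\mu_{k-1}}^{\epsilon_{k-1}}$ with $\epsilon_{k-1}=\gamma_1\mu_{k-1}^{1+\alpha}$ to bound the first and middle blocks of $\Phi_0(\tilde w^k)$ by $o(\mu_{k-1})$ while the third block is $\Theta(\mu_{k-1})$, then invoke the nonsingularity of $\mathcal J\Phi_0(\tilde w^\ast)$ to pass from $\|\Phi_0(\tilde w^k)\|$ to $\|\tilde w^k-\tilde w^\ast\|$, and finally identify the latter with $\|w^k-w^\ast\|$ via $y^k_i=y^\ast_i=0$ for $i\notin I_a(x^\ast)$.

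Two minor stylistic differences are worth noting. First, your treatment of the middle block---splitting $\phi_2$ into positive and negative parts and using $y^k_i\to y^\ast_i>0$ to peel off each $(-\hat g_i(x^k))_+$---is more explicit than the paper's one-line ``\eqref{al:1012} implies $|\hat g_i(x^k)|=o(\mu_{k-1})$''; in fact your argument is exactly what is needed to justify that line. Second, for $\|\Phi_0(\tilde w^k)\|=\Theta(\|\tilde w^k-\tilde w^\ast\|)$ you use a direct first-order expansion, whereas the paper argues by contradiction (assuming $\zeta_k:=\|\Phi_0(\tilde w^k)\|/\|\tilde w^k-\tilde w^\ast\|\to0$ and extracting a unit vector in $\ker\mathcal J\Phi_0(\tilde w^\ast)$); these are equivalent, and your version is arguably cleaner.
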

\begin{proof}
See Appendix.
\hspace{\fill}{$\square$}
\end{proof}
\begin{lemma}\label{rem:0517}
Suppose that Assumptions~B and C hold. Then, 
\begin{enumerate}
\item 
{we have}
$$\|\mathcal{J}\Phi_0(\tilde{w}^{k})-Q(\tilde{w}^k)\|_F=
\sqrt{\sum_{i=1}^n\left\|
\frac{1}{2}{\left(\mathcal{L}_{F(x^k)}\mathcal{L}_{V_k}\mathcal{L}_{F(x^k)}^{-1}-\mathcal{L}_{V_k}\right)F_i}
\right\|_F^2}
=
O(\mu_{k-1}^{\alpha})$$
and hence $\lim_{k\to \infty}Q(\tilde{w}^k)=\mathcal{J}\Phi_0(\tilde{w}^{\ast})$;
\item\label{rem:0517-2} $Q(\tilde{w}^k)$ is nonsingular for sufficiently large $k$ and $\{Q(\tilde{w}^k)^{-1}\}$ is bounded.
\end{enumerate}
\end{lemma}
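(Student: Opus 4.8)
The plan is to prove Lemma~\ref{rem:0517} by leveraging Proposition~\ref{prop:0511} together with the convergence rate established in Lemma~\ref{prop:0604-1}. The only place where $\mathcal{J}\Phi_0(\tilde w^k)$ and $Q(\tilde w^k)$ differ is the block whose $i$-th column is either ${\rm svec}(\mathcal{L}_{V_k}F_i)$ (for the Jacobian) or $v_i = \frac{1}{2}{\rm svec}(\mathcal{L}_{V_k}F_i + \mathcal{L}_{F(x^k)}\mathcal{L}_{V_k}\mathcal{L}_{F(x^k)}^{-1}F_i)$ (for $Q$); every other entry coincides. Hence $\mathcal{J}\Phi_0(\tilde w^k) - Q(\tilde w^k)$ is supported on that single block, and its $i$-th column equals ${\rm svec}(\mathcal{L}_{V_k}F_i) - v_i = \frac{1}{2}{\rm svec}\big((\mathcal{L}_{V_k} - \mathcal{L}_{F(x^k)}\mathcal{L}_{V_k}\mathcal{L}_{F(x^k)}^{-1})F_i\big)$. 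Taking the Frobenius norm columnwise yields the displayed identity for $\|\mathcal{J}\Phi_0(\tilde w^k) - Q(\tilde w^k)\|_F$ immediately, since ${\rm svec}$ is an isometry between $(S^m, \|\cdot\|_F)$ and $(\R^{m(m+1)/2}, \|\cdot\|)$.

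For the $O(\mu_{k-1}^\alpha)$ estimate, I would apply Proposition~\ref{prop:0511} with $X_r := F(x^{k})$, $Y_r := V_k$, $\mu_r := \mu_{k-1}$, and the exponent $\zeta := \alpha$. The hypotheses of that proposition are met: $\lim_{k\to\infty}(F(x^k),V_k) = (F(x^\ast),V_\ast)$ by Assumption~B-\ref{B1} and continuity of $F$; $F(x^\ast)$ and $V_\ast$ satisfy the strict complementarity condition $F(x^\ast)\circ V_\ast = O$, $F(x^\ast)+V_\ast \in S^m_{++}$ by Assumption~C-\ref{sc} (recalling that, by the KKT conditions, $F(x^\ast)\circ V_\ast = O$); and $\|F(x^k)\circ V_k - \mu_{k-1}I\|_F = O(\mu_{k-1}^{1+\alpha})$ because $w^k \in \mathcal{N}_{\mu_{k-1}}^{\epsilon_{k-1}}$ gives $\|\phi_3(x^k,V_k,\mu_{k-1})\|\le \epsilon_{k-1} = \gamma_1\mu_{k-1}^{1+\alpha}$. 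Proposition~\ref{prop:0511} then yields $\|\mathcal{L}_{F(x^k)}\mathcal{L}_{V_k}\mathcal{L}_{F(x^k)}^{-1} - \mathcal{L}_{V_k}\|_2 = O(\mu_{k-1}^{\alpha})$, and since each $\|(\mathcal{L}_{F(x^k)}\mathcal{L}_{V_k}\mathcal{L}_{F(x^k)}^{-1} - \mathcal{L}_{V_k})F_i\|_F \le \|\mathcal{L}_{F(x^k)}\mathcal{L}_{V_k}\mathcal{L}_{F(x^k)}^{-1} - \mathcal{L}_{V_k}\|_2 \|F_i\|_F$, summing the $n$ columns (with $\|F_i\|_F$ bounded by a fixed constant) gives $\|\mathcal{J}\Phi_0(\tilde w^k) - Q(\tilde w^k)\|_F = O(\mu_{k-1}^{\alpha})$. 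Since $\mu_{k-1}\to 0$, this forces $Q(\tilde w^k) \to \mathcal{J}\Phi_0(\tilde w^\ast)$, using also that $\mathcal{J}\Phi_0(\tilde w^k)\to \mathcal{J}\Phi_0(\tilde w^\ast)$ by continuity of the entries (the derivatives of $f$ and the $\hat g_i$, which are smooth by Assumption~C-1, plus $F(x^k)\to F(x^\ast)$, $V_k\to V_\ast$).

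For part~\ref{rem:0517-2}, I would argue by a standard perturbation/continuity argument: $\mathcal{J}\Phi_0(\tilde w^\ast)$ is nonsingular by Assumption~C-\ref{nonsing}, and $Q(\tilde w^k)\to \mathcal{J}\Phi_0(\tilde w^\ast)$ by part~1; since the set of nonsingular matrices is open and the map $A\mapsto A^{-1}$ is continuous on it, $Q(\tilde w^k)$ is nonsingular for all sufficiently large $k$ and $\|Q(\tilde w^k)^{-1}\| \to \|\mathcal{J}\Phi_0(\tilde w^\ast)^{-1}\|$, hence $\{Q(\tilde w^k)^{-1}\}$ is bounded.

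I do not expect a serious obstacle here; the lemma is essentially a bookkeeping combination of Proposition~\ref{prop:0511} with the neighborhood membership $w^k\in\mathcal{N}_{\mu_{k-1}}^{\epsilon_{k-1}}$ and the rate $\mu_{k-1} = \Theta(\|w^k - w^\ast\|)$ from Lemma~\ref{prop:0604-1} (the latter is needed mainly to translate the $O(\mu_{k-1}^\alpha)$ bound into error-term form for later use, not for this lemma itself). The one point requiring a little care is verifying that $\mathcal{J}\Phi_0$ and $Q$ genuinely agree outside the single block in question — in particular that the $\mathcal{T}_{F(x)}$ block and the $\hat g_i$ blocks are identical in both matrices — which is immediate from the definitions of $\mathcal{J}\Phi_\mu$ and $Q$ given just above, but should be stated explicitly.
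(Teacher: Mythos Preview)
Your proposal is correct and follows essentially the same approach as the paper: verify the hypotheses of Proposition~\ref{prop:0511} via $w^k\in\mathcal{N}_{\mu_{k-1}}^{\epsilon_{k-1}}$ and Assumptions~B-\ref{B1}, C-\ref{sc} to obtain $\|\mathcal{L}_{F(x^k)}\mathcal{L}_{V_k}\mathcal{L}_{F(x^k)}^{-1}-\mathcal{L}_{V_k}\|_2=O(\mu_{k-1}^{\alpha})$, translate this into the Frobenius-norm identity using that the two matrices differ only in the one block, and then deduce item~\ref{rem:0517-2} from Assumption~C-\ref{nonsing} by continuity of matrix inversion. Your remark that Lemma~\ref{prop:0604-1} is not actually needed here is also accurate.
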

\begin{proof}
Notice that $\|F(x^k)\circ V_k-\mu_{k-1}I\|\le \epsilon_{k-1}=\gamma_1\mu_{k-1}^{1+\alpha}$ by $w^k\in \mathcal{N}_{\tbkpre}^{\epsilon_{k-1}}$ and \eqref{eq:update_mu}.
In addition, note that 
$\lim_{k\to \infty}(F(x^k),V_k)=(F(x^{\ast}),V_{\ast})\in S^m_{+}\times S^m_+$ and 
$\lim_{k\to\infty}F(x^k)+V_k=F(x^{\ast})+V_{\ast}\in S^m_{++}$ 
by Assumptions\,B-\ref{B1} and C-\ref{sc}. Then, 
Proposition\,\ref{prop:0511}
with $\{X_r\}$, $\{Y_r\}$, $\{\mu_r\}$, and $\zeta$ replaced by $\{F(x^k)\}$, $\{V_k\}$, $\{\mu_{k-1}\}$, and $\alpha$, respectively,
yields
\begin{equation*}
\|\mathcal{L}_{{F}(x^k)}\mathcal{L}_{{V}_k}\mathcal{L}_{{F}(x^k)}^{-1}-\mathcal{L}_{{V}_k}\|_2=O(\tbkpre^{\alpha}),
\end{equation*}
which further implies 
\begin{equation}
\|\mathcal{J}\Phi_0(\tilde{w}^{k})-Q(\tilde{w}^k)\|_F=
\sqrt{\sum_{i=1}^n\left\|
\frac{1}{2}{\left(\mathcal{L}_{F(x^k)}\mathcal{L}_{V_k}\mathcal{L}_{F(x^k)}^{-1}-\mathcal{L}_{V_k}\right)F_i}
\right\|_F^2}
=O(\tbkpre^{\alpha}),\notag 
\end{equation}
where the first equality is a direct consequence of 
the fact $\|\svec(X)\|^2=\|X\|_F^2\ (X\in S^m)$ and
the forms of $\mathcal{J}\Phi_0(\tilde{w}^k)$ and $Q(\tilde{w}^k)$.
To prove item~\ref{rem:0517-2},  
recall that $\mathcal{J}\Phi_0(\tilde{w}^{\ast})$ is nonsingular from Assumption~C-\ref{nonsing}. 
Then, since $\lim_{k\to \infty}Q(\tilde{w}^k)=\mathcal{J}\Phi_0(\tilde{w}^{\ast})$ from item~1,
$Q(\tilde{w}^k)$ is nonsingular for all $k$ sufficiently large. 
In addition, we obtain the boundedness of $\{Q(\tilde{w}^k)^{-1}\}$
as $\lim_{k\to \infty}Q(\tilde{w}^k)^{-1}=\mathcal{J}\Phi_0(\tilde{w}^{\ast})^{-1}$ holds. The proof is complete.
\hspace{\fill}{$\square$}\end{proof} 
We are now ready to prove Proposition\,\ref{prop:0606} using Lemmas\,\ref{prop:0604-1} and \ref{rem:0517}.
\begin{description}
\item[\bf Proof of Proposition\,\ref{prop:0606}:]
\end{description}
Recall that $\tilde{w}^{k}=(x^{k},\tilde{y}^{k},\svec(V_{k}))$ with $\tilde{y}^{k}=(y^{k}_i)_{i\in I_a(x^{\ast})}$
and $\fDelta\tilde{w}^{k}=(\fDelta x^{k},\fDelta \tilde{y}^{k},\svec(\fDelta V_{k}))$.
For simplicity of expression, we suppose $I_a(x^{\ast})=\{1,2,\ldots,p(x^{\ast})\}$, which implies $\tilde{w}^{k}=w^k$
and $\fDelta \tilde{w}^k=\fDelta w^k$ for sufficiently large $k$ and $\tilde{w}^{\ast}=w^{\ast}$. It is not difficult to extend the subsequent analysis to the more general case of $I_a(x^{\ast})\subseteq \{1,2,\ldots,p(x^{\ast})\}$.
Hereafter, we write 
\begin{equation}
Q_k:=Q({w}^k),\ \mathcal{J}_k:=\mathcal{J}\Phi_0({w}^k)\label{eq:1228-3}
\end{equation}
for each $k$.

From item\,\ref{rem:0517-2} of Lemma\,\ref{rem:0517} together with \eqref{al:1105-1}, we readily see that 
there exists some $M>0$ such that
\begin{equation}
\|Q_k^{-1}\|_F\le M\label{eq:1205}
\end{equation}
and 
\begin{align}
&\Delta_{\frac{1}{2}}{w}^k=-Q_k^{-1}\Phi_{\mu_k}({w}^k),
\label{eq:1105-214-2}\\
&\|\Delta_{\frac{1}{2}}{w}^k\|\le \|Q_k^{-1}\|_F\|\Phi_{\mu_k}({w}^k)\|=O(\|\Phi_{\mu_k}({w}^k)\|),
\label{eq:1105-2149}
\end{align}
where the equality in \eqref{eq:1105-2149} follows from \eqref{eq:1205}.
Especially, combining the above with Lemma~\ref{prop:0604-1} implies that
\begin{equation}
\max\left(\|\Delta_{\frac{1}{2}}x^k\|,\|\Delta_{\frac{1}{2}}V_k\|\right)\le \|
\Delta_{\frac{1}{2}}{w}^k
\|=O(\mu_{k-1}).\label{eq:1110-1}
\end{equation}
Notice that $\|\Phi_{\mu}({w}^{\ast})\|=\|\mu I\|_F=\mu\sqrt{m}$ for any $\mu\ge 0$ and 
\begin{align}
\|\Phi_{{\tblumuk}}({w}^k)\|
&\le \|\Phi_{0}({w}^k)\|+{\tblumuk}\sqrt{m}\notag \\
&=\|\Phi_{0}({w}^k)\|+o(\mu_{k-1})\notag \\
&=O(\|w^k-w^{\ast}\|)\label{al:1203-3}\\
&=O(\tbkpre),\label{al:1203-4}
\end{align}
where 
the first equality follow from \eqref{eq:1201large}
and the last two equalities are derived from Lemma~\ref{prop:0604-1}.
Additionally, by item~1 of Lemma\,\ref{rem:0517}, we have 
\begin{equation}
\|\mathcal{J}_k-Q_k\|_F=
\sqrt{\sum_{i=1}^n\left\|
\frac{1}{2}{\left(\mathcal{L}_{F(x^k)}\mathcal{L}_{V_k}\mathcal{L}_{F(x^k)}^{-1}-\mathcal{L}_{V_k}\right)F_i}
\right\|_F^2}=O(\tbkpre^{\alpha}).\label{eq:1016-s}
\end{equation}\vspace{0.5em}
{\rm 1.} It suffices to show that 
\begin{equation}
F(x^k+\fDelta x^k)=F(x^k)+\fDelta F_k\in S^m_{++},\ V_k+\fDelta V_k\in S^m_{++}\label{eq:1202-1426}
\end{equation}
for any $k$ sufficiently large, where $\fDelta F_k:=\sum_{i=1}^n\fDelta x^k_iF_i$.
In fact, if these conditions hold, by \eqref{eq:1206-1} with $(X,\Delta X)=(F(x^k),\Delta_{\fr}F_k)$
and $(X,\Delta X)=(V_{k},\Delta_{\fr}V_k)$, we see that $t^k_{\fr}=u^k_{\fr}=1$ and thus $s^k_{\fr}=1$ from \eqref{eq:s0}.
From 
$w^k\in\mathcal{N}_{\tbkpre}^{\epsilon_{k-1}}$
and $\epsilon_{k-1}=\gamma_1\mu_{k-1}^{1+\alpha}$,
we have  
\begin{equation}
\|F(x^k)\circ V_k-\mu_{k-1} I\|_F=O(\mu_{k-1}^{1+\alpha}).\label{eq:1013-1}
\end{equation}
Denote
\begin{equation}
\Gamma_k:=\frac{1}{2}{
{\displaystyle\sum_{i=1}^n}\Delta_{\frac{1}{2}}x_i^k\left(\mathcal{L}_{{F}(x^k)}\mathcal{L}_{{V}_k}\mathcal{L}_{{F}(x^k)}^{-1}-\mathcal{L}_{V_k}\right)F_i}.\label{al:1103-2}
\end{equation}
In view of \eqref{eq:1110-1} and \eqref{eq:1016-s}, we have
\begin{align}
&\|\Gamma_k\|_F\le \|\mathcal{J}_k-Q_k\|_F\|\Delta_{\fr}x^k\|=O(\mu_{k-1}^{1+\alpha}),\label{eq:0612-2}\\
&\|\Delta_{\frac{1}{2}}F_k\circ\Delta_{\frac{1}{2}}V_k\|_F=O(\|\Delta_{\frac{1}{2}}w^k\|^2)
=
O(\mu_{k-1}^2).\label{al:1107}
\end{align}
{By rearranging \eqref{al:0506-2} with $\bar{w}=w^k$, $\mu=\mu_{k}$, and $P=I$ in terms of $\Gamma_k$, we obtain 
$$
F(x^k)\circ V_k+
\Delta_{\frac{1}{2}}F_k\circ V_k+\Delta_{\frac{1}{2}}V_k\circ F(x^k)-\mu_{k} I=-\Gamma_k,
$$
which together with \eqref{eq:0612-2} and \eqref{eq:1201large} implies 
\begin{align}
\|F(x^k)\circ V_k+
\fDelta F_k\circ V_k+\fDelta V_k\circ F(x^k)-\mu_{k} I\|_F&=O(\mu_{k-1}^{1+\alpha})\notag\\ 
                                                         &=O(\mu_k^{1+\frac{(1-c)\alpha}{1+c\alpha}}).                      \label{eq:1203-1304}
\end{align}
Since $\alpha\in (0,1)$ and $c\in (0,1)$,
using \eqref{eq:1201large} again, we obtain
\begin{equation}
\mu_{k-1}^2=o(\mu_k).\label{eq:1206-2031}
\end{equation}
In Proposition\,\ref{lem:1203-2}, replace $\{X_r\}$, $\{Y_r\}$, $\{\Delta X_r\}$, $\{\Delta Y_r\}$, $\{(\mu_r,\hmu_r)\}$, and $(\zeta,\hzeta)$
by $\{F(x^k)\}$, $\{V_k\}$, $\{\fDelta F_k\}$, $\{\fDelta V_k\}$, $\{(\mu_{k-1},\mu_k)\}$, and $(\alpha,\frac{(1-c)\alpha}{1+c\alpha})$, respectively.
Then, the relations\,\eqref{eq:1013-1}, \eqref{al:1107}, \eqref{eq:1203-1304}, and \eqref{eq:1206-2031} correspond to conditions\,\eqref{al:1202-1}--\eqref{al:1202-7}.
We thus have \eqref{eq:1202-1426} by Proposition\,\ref{lem:1203-2}.}\vspace{0.5em}\\
{\rm 2.} We have only to show ${R}_{{\tblumuk}}(w^{k+\fr})\le \tilde{\epsilon}_k$.
To start with, we note that from item\,\ref{item:0606-item2}, $\skh=1$ for all $k$ sufficiently large.
Then, the value of $\|\Phi_{{\tblumuk}}({w}^k+\skh \Delta_{\frac{1}{2}}{w}^k)\|$ is evaluated as follows: 
\begin{align}
\|\Phi_{{\tblumuk}}({w}^k+\Delta_{\frac{1}{2}}{w}^k)\|
&\le
\|\Phi_{{\tblumuk}}({w}^k)+\mathcal{J}_k\Delta_{\frac{1}{2}}{w}^k\|+O(\|\Delta_{\frac{1}{2}}{w}^k\|^2)\notag\\
&=\|\Phi_{\mu_k}({w}^k)-\mathcal{J}_kQ_k^{-1}\Phi_{\mu_k}({w}^k)\|+O(\mu_{k-1}^2)\notag\\
&=\|(Q_k-\mathcal{J}_k)Q_k^{-1}\Phi_{\mu_k}({w}^k)\|+O(\mu_{k-1}^{2})\notag\\
&\le \|\mathcal{J}_k-Q_k\|_F\|Q_k^{-1}\|_F\|\Phi_{\mu_k}({w}^k)\|+O(\mu_{k-1}^{2})\notag\\
&=O(\mu_{k-1}^{1+\alpha})+O(\mu_{k-1}^2)\notag\\
&=O(\mu_{k-1}^{1+\alpha}),\label{al:1016-1}
\end{align}
where
the first inequality follows from \eqref{eq:1228-3}
and $\mathcal{J}_k=\mathcal{J}\Phi_{\mu_k}(w^k)$, the first equality comes from \eqref{eq:1105-214-2} and \eqref{eq:1110-1}, and the third equality is derived from \eqref{al:1203-4}, \eqref{eq:1016-s}, and \eqref{eq:1205}.
From \eqref{al:1016-1}, we further obtain
\begin{align*}
\left|\sum_{i=1}^{p(x^{\ast})}(y^k_i+\Delta_{\frac{1}{2}} y^k_i)\hat{g}_i(x^k+\Delta_{\frac{1}{2}}x^k)\right|
&\le p(x^{\ast})\|{w}^k+\Delta_{\frac{1}{2}} {w}^k\|\|\Phi_{{\tblumuk}}({w}^k+\Delta_{\frac{1}{2}}{w}^k)\|\notag \\
&=O(\|\Phi_{{\tblumuk}}({w}^k+\Delta_{\frac{1}{2}}{w}^k)\|)\\
&=O(\mu_{k-1}^{1+\alpha})
\end{align*}
and
\begin{align*}
\max_{1\le i\le p(x^{\ast})}(\hat{g}_i(x^k+\Delta_{\frac{1}{2}}x^k))_+
&=\max_{i\in I_a(x^{\ast})}(\hat{g}_i(x^k+\Delta_{\frac{1}{2}}x^k))_+\\
&=O(\|\Phi_{{\tblumuk}}({w}^k+\Delta_{\frac{1}{2}}{w}^k)\|)\\
&=O(\mu_{k-1}^{1+\alpha}).
\end{align*}
Combining these facts, we get ${R}_{{\tblumuk}}(w^k+\Delta_{\frac{1}{2}}w^k)=O(\mu_{k-1}^{1+\alpha})$.
Since 
\eqref{eq:1228-1} and \eqref{eq:choice_c} together yield $\tilde{\epsilon}_k=\gamma_1\gamma_2^{1+\tilde{c}\alpha}\mu_{k-1}^{(1+c\alpha)(1+\tilde{c}\alpha)}$ and
$(1+c\alpha)(1+\tilde{c}\alpha)<1+\alpha$,
we conclude ${R}_{{\tblumuk}}(w^k+\Delta_{\frac{1}{2}}w^k)\le\tilde{\epsilon}_{k}$
for any $k$ sufficiently large. \vspace{0.5em}\\
{\rm 3.} Recall that 
$\{\|Q_k^{-1}\|\}$ is bounded by \eqref{eq:1205} and $\tbkpre=\Theta(\|{w}^k-{w}^{\ast}\|)$ from Lemma\,\ref{prop:0604-1}. 
It follows that
\begin{align}
\|{w}^k+&\Delta_{\frac{1}{2}}{w}^k-{w}^{\ast}\|\notag\\
&=
\|{w}^k-Q_k^{-1}\Phi_{\mu_k}(w^k)-{w}^{\ast}\|
\notag\\
&\le \|{w}^k-Q_k^{-1}\Phi_{0}(w^k)-{w}^{\ast}\|+\mu_{k}\|Q_k^{-1}\|_F\sqrt{m}\notag \\
&\le \|Q_k^{-1}\|_F\|Q_k({w}^k-w^{\ast})-\Phi_{0}(w^k)+\Phi_0(w^{\ast})\|+
\gamma_2\mu_{k-1}^{1+c\alpha}\|Q_k^{-1}\|_F\sqrt{m}\notag \\
&=\|Q_k^{-1}\|_F\|\mathcal{J}_k({w}^k-w^{\ast})+(Q_k-\mathcal{J}_k)({w}^k-w^{\ast})-\Phi_{0}(w^k)+\Phi_0(w^{\ast})\|+
O(\mu_{k-1}^{1+c\alpha})\notag \\
&\le \|Q_k^{-1}\|_F\|\mathcal{J}_k({w}^k-{w}^{\ast})-\Phi_{0}({w}^k)+\Phi_{0}({w}^{\ast})\|\notag \\
&\hspace{6em}+\|Q_k^{-1}\|_F\|\mathcal{J}_k-Q_k\|_F\|{w}^k-{w}^{\ast}\|+O(\|w^k-w^{\ast}\|^{1+c\alpha})\notag\\
&=O(\|{w}^k-{w}^{\ast}\|^2)+
O(\|{w}^k-{w}^{\ast}\|^{1+\alpha})
+O(\|{w}^k-{w}^{\ast}\|^{1+c\alpha})\notag \\
&=O(\|{w}^k-{w}^{\ast}\|^{1+c\alpha}),\notag  
\end{align}
where 
the first equality follows from \eqref{eq:1105-214-2}, 
the first inequality comes from $\Phi_{0}(w^k)-\Phi_{\mu_k}(w^k)=(0,0,\svec(\mu_kI))$ (see \eqref{eq:Phidef}),
the second inequality is derived from $\Phi_0(w^{\ast})=0$ and \eqref{eq:1201large}, and 
the third equality is due to \eqref{eq:1016-s} and Lemma\,\ref{prop:0604-1}.
Thus, the desired conclusion is obtained. 
\hspace{\fill}$\square$\vspace{0.5em}\\
We next enter the phase of proving Proposition\,\ref{prop:0606-2}. 
First, let us observe several properties obtained from Proposition\,\ref{prop:0606}.  
Note that item~\ref{prop:0606-1} of Proposition\,\ref{prop:0606} implies $\lim_{k\to \infty}{w}^{k+\fr}={w}^{\ast}$, and thus
\begin{equation}
\lim_{k\to \infty}\tilde{w}^{k+\fr}=\tilde{w}^{\ast}.\label{eq:1203-1616}
\end{equation}
By \eqref{eq:1203-1616}, 
$x^{k+\fr}$ is sufficiently close to $x^{\ast}$ for all $k$ large enough.
Then, from the implicit function theorem we have
$\{\tau_{x^{k+\fr}}^i(\cdot)\}_{i=1}^{p(x^{k+\fr})}=\{\tau_{x^{\ast}}^i(\cdot)\}_{i=1}^{p(x^{\ast})}$, 
while $\{\tau_{x^{k}}^i(\cdot)\}_{i=1}^{p(x^{k})}=\{\tau_{x^{\ast}}^i(\cdot)\}_{i=1}^{p(x^{\ast})}$ {also} holds for all $k$ sufficiently large because of Assumption~B-\ref{B1}.
Thus, 
the sets of implicit functions defined at $x^k$ and $x^{k+\fr}$ are identical, that is to say, $\{\tau_{x^{k+\fr}}^i(\cdot)\}_{i=1}^{p(x^{k+\fr})}=\{\tau_{x^{k}}^i(\cdot)\}_{i=1}^{p(x^{k})}$ holds. 
Furthermore, 
noting $w^{k+\fr}\in\mathcal{N}_{\mu_{k}}^{\tepsk}$, we can show that
$\|\mathcal{J}\Phi_0(\tilde{w}^{k+\fr})-Q(\tilde{w}^{k+\fr})\|_F
=O(\mu_{k}^{\tilde{c}\alpha})$ in a manner similar to item~1 of Lemma\,\ref{rem:0517}.
Thus, it holds that 
$\lim_{k\to\infty}Q(\tilde{w}^{k+\fr})=\mathcal{J}\Phi_0(\tilde{w}^{\ast})$
from \eqref{eq:1203-1616}, which together with Assumption\,C-\ref{nonsing} implies the nonsingularity of $Q(\tilde{w}^{k+\fr})$
for all $k$ sufficiently large
and the boundedness of $\{Q(\tilde{w}^{k+\fr})^{-1}\}$.

The above observations are summarized in the following lemma:
{\begin{lemma}\label{lem:12031219}
Suppose that Assumptions~B and C hold.
Then, we have
\begin{enumerate}
\item\label{lem:12031219-2} the functions $\hat{g}_i(\cdot)\ (i=1,2,\ldots,p(x^k))$ 
defined at $x^k$ are also valid at $x^{k+\fr}$ and hence the linear equations\,\eqref{al:1201-1}--\eqref{al:1201-4} 
are well-defined, and $Q(\tilde{w}^{k+\fr})$ is nonsingular for all $k$ sufficiently large. In addition,  
\item $\|\mathcal{J}\Phi_0(\tilde{w}^{k+\fr})-Q(\tilde{w}^{k+\fr})\|_F
=
O(\mu_{k}^{\tilde{c}\alpha}),$ and 
\item\label{lem:12031219-3} $\{Q(\tilde{w}^{k+\fr})^{-1}\}$ is bounded.
\end{enumerate}
\end{lemma}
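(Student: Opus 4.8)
The plan is to follow the proof of Lemma~\ref{rem:0517} line by line, replacing the base point $w^k$ by the intermediate iterate $w^{k+\fr}$. Everything needed to make this transcription go through is already supplied by Proposition~\ref{prop:0606}: its item~\ref{prop:0606-1} gives $\|w^{k+\fr}-w^{\ast}\|=O(\|w^k-w^{\ast}\|^{1+c\alpha})$, hence $\lim_{k\to\infty}w^{k+\fr}=w^{\ast}$ and, after the identification of measures with vectors in $\R^{p(x^{\ast})}$ made under Assumption~B-\ref{B3}, the limit \eqref{eq:1203-1616}; and its item~\ref{prop:0606-item1} guarantees $w^{k+\fr}\in\mathcal{N}_{\mu_k}^{\tepsk}$ for all sufficiently large $k$.

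For item~\ref{lem:12031219-2} I would argue as follows. By \eqref{eq:1203-1616}, $x^{k+\fr}$ lies in the neighbourhood $U(x^{\ast})$ of the nondegenerate point $x^{\ast}$ for all large $k$, so the implicit function theorem yields $\{\tau^i_{x^{k+\fr}}(\cdot)\}_{i=1}^{p(x^{k+\fr})}=\{\tau^i_{x^{\ast}}(\cdot)\}_{i=1}^{p(x^{\ast})}$. Assumption~B-\ref{B3} provides the same identity at $x^k$, so the families of implicit functions at $x^k$ and at $x^{k+\fr}$ coincide; in particular $\{\tau^i_{x^k}(\cdot)\}_{i\in J_a^k}\subseteq\{\tau^i_{x^{k+\fr}}(\cdot)\}_{i=1}^{p(x^{k+\fr})}$, which is exactly the condition for the functions $\hat g_i(\cdot)$ built at $x^k$ to be valid at $x^{k+\fr}$ and hence for \eqref{al:1201-1}--\eqref{al:1201-4} to be well-defined. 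The nonsingularity of $Q(\tilde w^{k+\fr})$ I would defer and recover at the end, once items~2 and~3 are established.

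The substantive part is item~2. The key observation is that $w^{k+\fr}\in\mathcal{N}_{\mu_k}^{\tepsk}$ together with the definition~\eqref{eq:1228-1} of $\tepsk$ gives $\|F(x^{k+\fr})\circ V_{k+\fr}-\mu_k I\|_F\le\tepsk=\gamma_1\mu_k^{1+\tilde c\alpha}=O(\mu_k^{1+\tilde c\alpha})$, while \eqref{eq:1203-1616} together with Assumption~C-\ref{sc} gives $\lim_{k\to\infty}(F(x^{k+\fr}),V_{k+\fr})=(F(x^{\ast}),V_{\ast})\in S^m_+\times S^m_+$ with $F(x^{\ast})+V_{\ast}\in S^m_{++}$. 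I would then apply Proposition~\ref{prop:0511}, with $\{X_r\},\{Y_r\},\{\mu_r\},\zeta$ replaced by $\{F(x^{k+\fr})\},\{V_{k+\fr}\},\{\mu_k\},\tilde c\alpha$, to obtain $\|\mathcal{L}_{F(x^{k+\fr})}\mathcal{L}_{V_{k+\fr}}\mathcal{L}_{F(x^{k+\fr})}^{-1}-\mathcal{L}_{V_{k+\fr}}\|_2=O(\mu_k^{\tilde c\alpha})$. Because $\mathcal{J}\Phi_0(\tilde w^{k+\fr})$ and $Q(\tilde w^{k+\fr})$ differ only in the $V$-block, whose $i$-th column is $\svec(\mathcal{L}_{V_{k+\fr}}F_i)$ in the former and $v_i=\frac{1}{2}\svec(\mathcal{L}_{V_{k+\fr}}F_i+\mathcal{L}_{F(x^{k+\fr})}\mathcal{L}_{V_{k+\fr}}\mathcal{L}_{F(x^{k+\fr})}^{-1}F_i)$ in the latter, and $\|\svec(X)\|^2=\|X\|_F^2$, this gives, exactly as in item~1 of Lemma~\ref{rem:0517}, $\|\mathcal{J}\Phi_0(\tilde w^{k+\fr})-Q(\tilde w^{k+\fr})\|_F=O(\mu_k^{\tilde c\alpha})$.

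Finally, for item~3 and the deferred nonsingularity in item~\ref{lem:12031219-2}: item~2 and \eqref{eq:1203-1616} yield $\lim_{k\to\infty}Q(\tilde w^{k+\fr})=\mathcal{J}\Phi_0(\tilde w^{\ast})$, which is nonsingular by Assumption~C-\ref{nonsing}; hence $Q(\tilde w^{k+\fr})$ is nonsingular for all large $k$ and $\lim_{k\to\infty}Q(\tilde w^{k+\fr})^{-1}=\mathcal{J}\Phi_0(\tilde w^{\ast})^{-1}$, so $\{Q(\tilde w^{k+\fr})^{-1}\}$ is bounded. I do not anticipate any real obstacle: the whole lemma is a re-run of Lemma~\ref{rem:0517}, and the only genuinely new point is checking that the hypotheses of Proposition~\ref{prop:0511} are met at the points $w^{k+\fr}$ --- namely convergence to the \emph{same} KKT point $w^{\ast}$ and the sharper residual bound $\tepsk$ --- both of which are precisely the conclusions of items~\ref{prop:0606-1} and~\ref{prop:0606-item1} of Proposition~\ref{prop:0606}.
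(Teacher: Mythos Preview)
Your proposal is correct and matches the paper's argument essentially line for line: the paper's proof of this lemma is the paragraph immediately preceding its statement, where exactly the same ingredients are invoked --- item~\ref{prop:0606-1} of Proposition~\ref{prop:0606} for $\lim_{k\to\infty}w^{k+\fr}=w^{\ast}$ and hence the implicit-function identification, item~\ref{prop:0606-item1} for $w^{k+\fr}\in\mathcal{N}_{\mu_k}^{\tepsk}$, and then Proposition~\ref{prop:0511} applied ``in a manner similar to item~1 of Lemma~\ref{rem:0517}'' to obtain the $O(\mu_k^{\tilde c\alpha})$ bound, from which nonsingularity and boundedness of the inverses follow via Assumption~C-\ref{nonsing}. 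Your write-up is in fact slightly more explicit than the paper's (you spell out the application of Proposition~\ref{prop:0511} rather than just pointing to Lemma~\ref{rem:0517}), but the route is identical.
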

Furthermore, in a manner similar to Lemma\,\ref{prop:0604-1}, we can derive the following result in view of $w^{k+\fr}\in \mathcal{N}_{\mu_k}^{\tilde{\epsilon}_k}$:
\begin{lemma}\label{prop:0604-2}
Suppose that Assumptions~B and C hold.
Then, 
we have 
$\|w^{k+\fr}-w^{\ast}\|=\|\tilde{w}^{k+\fr}-\tilde{w}^{\ast}\|$ for sufficiently large $k$
and 
$\mu_{k}=\Theta(\|\Phi_{0}(\tilde{w}^{k+\fr})\|)=\Theta(\|\tilde{w}^{k+\fr}-\tilde{w}^{\ast}\|)=\Theta(\|{w}^{k+\fr}-{w}^{\ast}\|)$.
\end{lemma}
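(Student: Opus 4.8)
The plan is to run the argument of Lemma~\ref{prop:0604-1} essentially verbatim, with the triple $(\mu_{k-1},\epsilon_{k-1},w^k)$ and the neighbourhood $\mathcal{N}_{\mu_{k-1}}^{\epsilon_{k-1}}$ replaced throughout by $(\mu_k,\tepsk,w^{k+\fr})$ and $\mathcal{N}_{\mu_k}^{\tepsk}$. All the structural facts the earlier proof relied on remain in force: $w^{k+\fr}\in\mathcal{N}_{\mu_k}^{\tepsk}$ by item~\ref{prop:0606-item1} of Proposition~\ref{prop:0606}; $\tilde w^{k+\fr}\to\tilde w^{\ast}$ by \eqref{eq:1203-1616} (equivalently, item~\ref{prop:0606-1} of Proposition~\ref{prop:0606}); the active implicit functions at $x^{k+\fr}$ coincide with those at $x^{\ast}$ and $\supp(y^{k+\fr})$ lies in the active index set, by the discussion preceding Lemma~\ref{lem:12031219} (see also its item~\ref{lem:12031219-2}) together with $\skh=1$ from item~\ref{item:0606-item2} of Proposition~\ref{prop:0606}; and, crucially, $\tepsk=\gamma_1\mu_k^{1+\tilde c\alpha}=o(\mu_k)$, exactly as $\epsilon_{k-1}=\gamma_1\mu_{k-1}^{1+\alpha}=o(\mu_{k-1})$ in the earlier lemma.

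First I would prove the identity $\|w^{k+\fr}-w^{\ast}\|=\|\tilde w^{k+\fr}-\tilde w^{\ast}\|$ for large $k$. Since $\skh=1$ eventually, $w^{k+\fr}=w^k+\fDelta w^k$, and the paragraph preceding Lemma~\ref{lem:12031219} records that $y^k_i+\fDelta y^k_i=0$ for every index $i$ outside $I_a(x^{\ast})$; hence $y^{k+\fr}$ vanishes on those indices. On the other hand $y^{\ast}_i=0$ there as well, by the complementarity in the KKT conditions together with strict complementarity (Assumption~C-\ref{sc}), so the coordinates outside $I_a(x^{\ast})$ contribute nothing to the norm difference, which is the claimed identity once $y^{k+\fr}$ and $y^{\ast}$ are identified with vectors in $\R^{|I_a(x^{\ast})|}$.

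Next I would show $\mu_k=\Theta(\|\Phi_{0}(\tilde w^{k+\fr})\|)$. Writing $R_{\mu_k}(w^{k+\fr})\le\tepsk$ out componentwise and using the identification of the active implicit functions, the first and third blocks of $\Phi_{\mu_k}(\tilde w^{k+\fr})$ are exactly $\phi_1(x^{k+\fr},y^{k+\fr},V_{k+\fr})$ and $\phi_3(x^{k+\fr},V_{k+\fr},\mu_k)$, so their norms are at most $\tepsk$; for the middle block $(\hat g_i(x^{k+\fr}))_{i\in I_a(x^{\ast})}$ I would bound the positive parts by $\theta(x^{k+\fr})\le\tepsk$ and the negative parts by splitting $\phi_2(x^{k+\fr},y^{k+\fr})=\sum_{i\in I_a(x^{\ast})}y^{k+\fr}_i\hat g_i(x^{k+\fr})$ according to the sign of $\hat g_i(x^{k+\fr})$ and invoking $y^{k+\fr}_i\to y^{\ast}_i>0$ for $i\in I_a(x^{\ast})$ (strict complementarity). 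This yields $\|\Phi_{\mu_k}(\tilde w^{k+\fr})\|=O(\tepsk)=o(\mu_k)$. Since $\Phi_{0}(\tilde w)-\Phi_{\mu_k}(\tilde w)=(0,0,\svec(\mu_kI))$, we get both $\mu_k\sqrt m\le\|\Phi_{0}(\tilde w^{k+\fr})\|+o(\mu_k)$ and $\|\Phi_{0}(\tilde w^{k+\fr})\|\le\|\Phi_{\mu_k}(\tilde w^{k+\fr})\|+\mu_k\sqrt m=O(\mu_k)$, hence $\mu_k=\Theta(\|\Phi_{0}(\tilde w^{k+\fr})\|)$. Finally, because $\Phi_{0}$ is continuously differentiable with $\Phi_{0}(\tilde w^{\ast})=0$ and $\mathcal{J}\Phi_{0}(\tilde w^{\ast})$ nonsingular (Assumption~C-\ref{nonsing}), a standard Taylor/inverse-function estimate gives $\|\Phi_{0}(\tilde w^{k+\fr})\|=\Theta(\|\tilde w^{k+\fr}-\tilde w^{\ast}\|)$ once $\tilde w^{k+\fr}$ is close enough to $\tilde w^{\ast}$, i.e.\ for all large $k$ by \eqref{eq:1203-1616}. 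Chaining these $\Theta$-relations with the identity of the previous paragraph completes the proof.

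The only real work is the lower estimate $\|\Phi_{\mu_k}(\tilde w^{k+\fr})\|=o(\mu_k)$, and inside it the control of the middle block $(\hat g_i(x^{k+\fr}))_{i\in I_a(x^{\ast})}$: one must combine the $O(\tepsk)$ bound on $\phi_2$ with the uniform positivity of the limiting multipliers $y^{\ast}_i$, $i\in I_a(x^{\ast})$, to convert it into an $O(\tepsk)$ bound on the negative parts of $\hat g_i(x^{k+\fr})$. Everything else is a line-by-line transcription of the corresponding steps in the proof of Lemma~\ref{prop:0604-1}, with $\epsilon_{k-1}=o(\mu_{k-1})$ replaced by $\tepsk=o(\mu_k)$.
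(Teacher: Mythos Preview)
Your proposal is correct and takes essentially the same approach as the paper, which simply records that the proof is obtained in a manner analogous to Lemma~\ref{prop:0604-1}. Your explicit write-up fleshes out precisely the substitutions the paper leaves implicit---$(\mu_{k-1},\epsilon_{k-1},w^k)\mapsto(\mu_k,\tepsk,w^{k+\fr})$ together with $w^{k+\fr}\in\mathcal{N}_{\mu_k}^{\tepsk}$ from Proposition~\ref{prop:0606}---and your handling of $y_i^{k+\fr}=0$ for $i\notin I_a(x^{\ast})$ via $\skh=1$ and the complementarity relation $y_i^k+\fDelta y_i^k=0$ is in fact slightly cleaner than the corresponding step in the paper's proof of Lemma~\ref{prop:0604-1}.
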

\begin{proof}
The proof is obtained in a manner analogous to Lemma\,\ref{prop:0604-1}. 
\hspace{\fill}{$\square$}\end{proof}}
We are now ready to prove Proposition\,\ref{prop:0606-2}.
Its proof seems quite similar to Propositions\,\ref{prop:0606}.
However, we do not omit it since there are some significant differences. 
For example, the proof of item~\ref{item:0606-2} of Proposition\,\ref{prop:0606-2} relies on
the condition $\tilde{c}>\frac{1}{2}$ in \eqref{eq:choice_c}.
\begin{description}
\item[{\bf Proof of Proposition\,\ref{prop:0606-2}}:]
\end{description}
Like the proof of Proposition\,\ref{prop:0606},
for simplicity of expression, we suppose $I_a(x^{\ast})=\{1,2,\ldots,p(x^{\ast})\}$, which implies $\tilde{w}^{k+\fr}=w^{k+\fr}$
and $\oDelta w^k=\oDelta \tilde{w}^k$ for sufficiently large $k$ and $\tilde{w}^{\ast}=w^{\ast}$. 
We also write 
\begin{equation*}
Q_{k+\fr}:=Q(w^{k+\fr}),\ \mathcal{J}_{k+\frac{1}{2}}:=\mathcal{J}\Phi_{\mu_k}({w}^k+\Delta_{\frac{1}{2}}{w}^k)=\mathcal{J}\Phi_0({w}^k+\Delta_{\frac{1}{2}}{w}^k).
\end{equation*}
By item~2 of Proposition\,\ref{prop:0606}, we have, for any $k$ sufficiently large,  
\begin{equation}
\|\Phi_{\mu_k}(w^k)\|=O(\mu_k^{1+\tilde{c}\alpha})\label{eq:0815-1}
\end{equation}
By \eqref{al:1105-1}
with $j=\fr$ and item~\ref{lem:12031219-2} of Lemma\,\ref{lem:12031219}, we have 
\begin{equation}
\Delta_1w^k=-Q_{k+\frac{1}{2}}^{-1}\Phi_{\mu_k}({w}^{k+\frac{1}{2}}), \label{eq:1203-2155} 
\end{equation}
which together with \eqref{eq:0815-1},  
item~\ref{lem:12031219-3} of Lemma\,\ref{lem:12031219}, and Lemma\,\ref{prop:0604-2}
implies
\begin{equation}
\|\Delta_1w^k\|=O(\|\Phi_{\mu_k}({w}^{k+\frac{1}{2}})\|)=O(\mu_k^{1+\tilde{c}\alpha}).\label{eq:1203-1641}
\end{equation}
Combining this with Lemma\,\ref{prop:0604-2} yields 
\begin{equation}
\|\Delta_1w^k\|=O(\|{w}^{k+\frac{1}{2}}-{w}^{\ast}\|^{1+\tilde{c}\alpha}).\label{eq:1015-0}
\end{equation}
{Note that, from Lemma\,\ref{lem:12031219}, we have
\begin{equation}
\|\mathcal{J}_{k+\fr}-Q_{k+\fr}\|=O(\mu_k^{\tilde{c}\alpha}).\label{eq:0814-1}
\end{equation}} \vspace{0.5em}\\
{\rm 1.} As in the proof of item~\ref{item:0606-item2} of Proposition\,\ref{prop:0606},
it suffices to show that 
\begin{equation}
F(x^{k+\fr}+\Delta_1x^k)=F(x^{k+\fr})+\oDelta F_k\in S^m_{++},\ V_{k+\fr}+\Delta_1V_k\in S^m_{++}\label{eq:1116-1913}
\end{equation}
for all $k$ sufficiently large, where $\oDelta F_k:=\sum_{i=1}^n\oDelta x^k_i F_i$.
Since
$w^{k+\fr}\in\mathcal{N}_{\mu_k}^{\tilde{\epsilon}_{k}}$ by Proposition\,\ref{prop:0606} and $\tilde{\epsilon}_k=\gamma_1\mu_k^{1+\tilde{c}\alpha}$, we obtain 
\begin{equation}
\|F(x^{k+\frac{1}{2}})\circ V_{k+\fr}-{\tblumuk}I\|_F=O(\mu_k^{1+\tilde{c}\alpha}). \label{eq:1015-4}
\end{equation}
{The expression}\,\eqref{eq:1203-1641} implies
\begin{equation}
\|\Delta_{1}F_k\circ\Delta_{1}V_k\|_F=O(\|\Delta_1w^k\|^2)=O(\mu_k^2).\label{eq:1015-5}
\end{equation}
Moreover, by \eqref{al:1201-2} {with $\hat{P}=I,\hw=w^{k+\fr}$, and $\mu=\mu_k$,} it holds that
\begin{equation}
\|F(x^{k+\fr})\circ V_{k+\fr}+\oDelta F_k\circ V_{k+\fr}+F(x^{k+\fr})\circ \oDelta V_k-\mu_k I\|_F=0.\label{eq:1203-2139}
\end{equation}
In Proposition\,\ref{lem:1203-2}, replace $\{X_r\}$, $\{Y_r\}$, $\{\Delta X_r\}$, $\{\Delta Y_r\}$, $\{(\mu_r,\hmu_r)\}$, and $(\zeta,\hzeta)$ by 
$\{F(x^{k+\fr})\}$, $\{V_{k+\fr}\}$, $\{\oDelta F_k\}$, $\{\oDelta V_k\}$, $\{(\mu_k,\mu_k)\}$, and $(\tilde{c}\alpha,\alpha)$, respectively.
Then, in view of \eqref{eq:1015-4}, \eqref{eq:1015-5}, and \eqref{eq:1203-2139}, we can verify the conditions\,\eqref{al:1202-1}--\eqref{al:1202-7}.
We thus obtain \eqref{eq:1116-1913} and conclude the desired result.\vspace{0.5em}\\ 
{\rm 2.} We show only ${R}_{{\tblumuk}}(w^{k+\fr}+\Delta_1w^k)\le \epsilon_k=\gamma_1\mu_k^{1+\alpha}$ for $k$ sufficiently large.
The remaining part is obvious from \eqref{eq:1116-1913}.
We first note that 
\begin{align}
\|\Phi_{{\tblumuk}}({w}^{k+\frac{1}{2}}+\Delta_{1}{w}^k)\|
&\le \|\Phi_{{\tblumuk}}({w}^{k+\frac{1}{2}})+\mathcal{J}_{k+\frac{1}{2}}\Delta_{1}{w}^k\|+O(\|\Delta_{1}{w}^k\|^2)\notag\\
&\le \|\Phi_{{\tblumuk}}({w}^{k+\frac{1}{2}})+Q_{k+\frac{1}{2}}\Delta_{1}{w}^k\| \notag\\
&\hspace{2em}+\|\mathcal{J}_{k+\frac{1}{2}}-Q_{k+\fr}\|\|\Delta_{1}{w}^k\|
+O(\|\Delta_{1}{w}^k\|^2)\notag\\
&=O\left(\mu_k^{\tilde{c}\alpha}\|\Delta_{1}{w}^k\|\right)+O(\|\Delta_{1}{w}^k\|^2)\notag \\
&=O(\mu_k^{1+2\tilde{c}\alpha}), \notag 
\end{align}
where
the first equality follows from \eqref{al:1105-1}, \eqref{eq:1203-2155}, and \eqref{eq:0814-1}
and the second equality is obtained from \eqref{eq:1203-1641}.
Then, in a manner similar to Proposition\,\ref{prop:0606}, we can show
${R}_{\mu_k}(w^{k+\frac{1}{2}}+\Delta_1w^k)=O(\mu_k^{1+2\tilde{c}\alpha})$, which together with
$\tilde{c}>\frac{1}{2}$
from \eqref{eq:choice_c} implies ${R}_{{\tblumuk}}(w^{k+\frac{1}{2}}+\Delta_1w^k)\le \epsilon_k=\gamma_1\mu_k^{1+\alpha}$ for all $k$ sufficiently large. \vspace{0.5em}\\
{\rm 3.} We have 
\begin{align}
\|{w}^{k+\frac{1}{2}}+\Delta_{1}{w}^k-{w}^{\ast}\|&\le \|{w}^{k+\frac{1}{2}}-w^{\ast}\|+\|\Delta_{1}{w}^k\|\notag\\
&=\|{w}^{k+\frac{1}{2}}-w^{\ast}\|+O(\|{w}^{k+\frac{1}{2}}-{w}^{\ast}\|^{1+\tilde{c}\alpha})\notag \\
&=O(\|{w}^{k+\frac{1}{2}}-{w}^{\ast}\|),\notag  
\end{align}
where the first equality follows from \eqref{eq:1015-0}.\hspace{\fill}{$\square$} \vspace{0.5em}\\
Combining Propositions\,\ref{prop:0606} and \ref{prop:0606-2},
we get the following two-step superlinear convergence result.
\begin{theorem}\label{thm:1124}
Suppose that Assumptions~B and C hold.
Then, the update in Step~2-3 is eventually adopted and we have 
\begin{equation}
\|w^{k+1}-w^{\ast}\|=O\left(\|w^k-w^{\ast}\|^{1+c\alpha}\right). \label{eq:0606}
\end{equation}
Hence, $\{w^k\}$ converges to $w^{\ast}$ two-step superlinearly with the order of $1+c\alpha\in \left(1,\frac{4}{3}\right)$.
\end{theorem}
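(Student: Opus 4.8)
The plan is to obtain Theorem~\ref{thm:1124} as an essentially immediate consequence of Propositions~\ref{prop:0606} and \ref{prop:0606-2}: all of the analytic content---acceptance of unit step-sizes, preservation of the interior-point conditions, and the per-substep contraction rates---has already been established there, so what remains is to track which branch of Step~2 is executed and to perform a short arithmetic check on the exponent.

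First I would show that, for all $k$ large enough, Algorithm~2 never enters the fallback Step~2-4. By item~\ref{item:0606-item2} of Proposition~\ref{prop:0606}, the step-size $\skh=1$ is eventually taken in Step~2-1, so $w^{k+\fr}=w^k+\fDelta w^k$; by item~\ref{prop:0606-item1} of the same proposition, $F(x^{k+\fr})\in S^m_{++}$ and $V_{k+\fr}\in S^m_{++}$, and by item~\ref{lem:12031219-2} of Lemma~\ref{lem:12031219} the linear equations \eqref{al:1201-1}--\eqref{al:1201-4} in Step~2-2 are then well-defined and solvable, producing $\oDelta w^k$. Next, item~\ref{item:0606-2-1} of Proposition~\ref{prop:0606-2} gives $\sk=1$, and item~\ref{item:0606-2} of that proposition gives $w^k_+:=w^{k+\fr}+\oDelta w^k\in\mathcal{N}_{\mu_k}^{\epsilon_k}$. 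Hence the test in Step~2-3 passes, $w^{k+1}:=w^k_+$ is set, and control passes to Step~3 without invoking the interior-point SQP method; this proves the first assertion of the theorem and, incidentally, maintains the invariant $w^{k+1}\in\mathcal{N}_{\mu_k}^{\epsilon_k}$ so that the induction may proceed.

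It then remains to chain the error estimates. Using $w^{k+1}=w^{k+\fr}+\oDelta w^k$, item~3 of Proposition~\ref{prop:0606-2}, and item~\ref{prop:0606-1} of Proposition~\ref{prop:0606},
\begin{equation*}
\|w^{k+1}-w^{\ast}\|=O\!\left(\|w^{k+\fr}-w^{\ast}\|\right)=O\!\left(\|w^k-w^{\ast}\|^{1+c\alpha}\right),
\end{equation*}
which is \eqref{eq:0606}. Since each outer iteration $k\mapsto k+1$ consists of the two SQP substeps that generate $\fDelta w^k$ and then $\oDelta w^k$, this is exactly a two-step estimate, and it is superlinear because $c,\alpha>0$ forces $1+c\alpha>1$. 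Finally, the choice $0<c\le\frac{1}{\alpha+2}$ from Step~0 together with $\alpha\in(0,1)$ yields $c\alpha\le\frac{\alpha}{\alpha+2}<\frac{1}{3}$, so the order satisfies $1+c\alpha\in\left(1,\frac{4}{3}\right)$.

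At this stage there is no genuine analytic obstacle: the difficulty has been entirely front-loaded into Propositions~\ref{prop:0606} and \ref{prop:0606-2}. The one point demanding attention is bookkeeping with the neighborhoods---one must check that $w^{k+\fr}$ lands in the \emph{enlarged} neighborhood $\mathcal{N}_{\mu_k}^{\tepsk}$, which is what makes Proposition~\ref{prop:0606-2} applicable, while $w^{k+1}$ must land back in the \emph{targeted} neighborhood $\mathcal{N}_{\mu_k}^{\epsilon_k}$ required by the outer loop; the strict inequality $\tepsk>\epsilon_k$ from \eqref{eq:1228-2} is precisely what reconciles the two and keeps the inductive argument consistent.
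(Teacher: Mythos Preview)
Your proposal is correct and follows essentially the same approach as the paper: invoke Propositions~\ref{prop:0606} and \ref{prop:0606-2} to conclude that Step~2-3 is eventually accepted with $w^{k+1}=w^{k+\fr}+\oDelta w^k$, chain item~3 of Proposition~\ref{prop:0606-2} with item~\ref{prop:0606-1} of Proposition~\ref{prop:0606} to obtain \eqref{eq:0606}, and verify $1+c\alpha\in(1,\tfrac{4}{3})$ from the parameter choice in Step~0. Your version is slightly more explicit about the solvability of \eqref{al:1201-1}--\eqref{al:1201-4} via Lemma~\ref{lem:12031219}, which the paper leaves implicit, but this is a presentational rather than a substantive difference.
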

\begin{proof}
From item~\ref{item:0606-2} of Proposition\,\ref{prop:0606-2}, $w^{k+1}=w^{k+\fr}+\Delta_1w^k$ holds for any $k$ sufficiently large, that is to say,
the update in
Step~2-3 is eventually accepted.  Using Propositions\,\ref{prop:0606} and \ref{prop:0606-2} again yields
\begin{equation*}
\|w^{k+\fr}+\Delta_1w^k-w^{\ast}\|=O(\|w^{k+\fr}-w^{\ast}\|)=O(\|w^k-w^{\ast}\|^{1+c\alpha}).
\end{equation*}
We thus confirm \eqref{eq:0606}.
Finally, since the parameter $c$ is chosen so that
$0<c\le \frac{1}{\alpha+2}$, we have
$$1< 1+c\alpha\le 1+\frac{\alpha}{\alpha+2}=2-\frac{2}{\alpha+2}< \frac{4}{3}.$$
The proof is complete.
\hspace{\fill}{$\square$}
\end{proof}


\section{Numerical experiments}\label{sec:5}
In this section, we conduct some numerical experiments to demonstrate the efficiency of the primal-dual path following method (Algorithm~2) by solving two kinds of SISDPs with a one-dimensional index set of the form $T=[T_{\rm min},T_{\rm max}]$: The first one is a linear SISDP where all functions are affine with respect to $x$; the second one is an SISDP with a nonlinear objective function.
Throughout the section, we identify a symmetric matrix variable $X\in S^m$ with a vector variable $x:=(x_{11},x_{12},\ldots,x_{1m},x_{12},x_{22},\ldots,x_{mm})^{\top}\in\R^{\frac{m(m+1)}{2}}$ through
$$
X=\begin{pmatrix}
             x_{11}&x_{12}&\ldots&x_{1m}\\
             x_{12}&x_{22}&\ldots&x_{2m}\\
           \vdots&\vdots&\ddots&\vdots\\
             x_{1m}&x_{2m}&\ldots&x_{mm}      
    \end{pmatrix}.
$$ 
The program was coded in MATLAB~R2012a and run on a machine with Intel(R) Xeon(R) CPU E5-1620 v3@3.50GHz and 10.24GB RAM. 
We compute the scaling matrices for the NT direction according to \cite[Section~4.1]{todd1998nesterov}.
As for SISDPs with a nonlinear objective function,
the matrix $B_P$ in the quadratic program\,\eqref{al:qp} is not necessarily positive-definite.
So as to assure its positive definiteness, we modified $B_P$ by lifting its negative eigenvalues to $1$.
Let $\bar{x}$ be a current point and $\{\tau_{\bar{x}}^i(\cdot)\}_{i=1}^{p(\bar{x})}$
be the set of implicit functions defined in \eqref{reduced_LSISDP}. 
As for the set $S_{\delta}(\bar{x})$ defined by \eqref{eq:Sdelta}, 
we set $\delta := 10^{-1}$ and put $N+1$ grids 
$\{s_1,s_2,\ldots,s_{N+1}\}$ on $T$ 
uniformly with $N:=100$. 
To specify the set $S_{\delta}(\bar{x})$, we apply Newton's method combined with the projection onto $T$
for the problem $\max_{\tau\in T}g(\bar{x},\tau)$
starting from each of the local maximizers $\bar{s}$ of $\max\{g(\bar{x},s)\mid s=s_1,s_2,\ldots,s_{N+1}\}$ such that 
$g(\bar{x},\bar{s}) > \max_{1\le i\le N+1}g(\bar{x},s_i) - \delta$.
Let $\bar{y}\in \R^{p(\bar{x})}_+$ be a current estimate of Lagrange multiplier vector associated with the inequality constraints $g(x,\tau_{\bar{x}}^i(x))\le 0\ (i=1,2,\ldots,p(\bar{x}))$. 
As $\bar{x}$ moves to $\bar{x}+\Delta \bar{x}$, we trace the value of the implicit function $\tau_{\bar{x}}^i$ for each $i=1,2,\ldots,p(\bar{x})$, namely, we identify $\tau_{\bar{x}}^i(\bar{x}+\Delta \bar{x})$ with an element in $S_{\delta}(\bar{x}+\Delta \bar{x})$
to examine the correspondence
between $\bar{y}_i\ (i=1,2,\ldots,p(\bar{x}))$ and the inequality constraints ${g}(x,\tau_{\bar{x}+\Delta \bar{x}}^j(x))\le 0\ (1\le j\le p(\bar{x}+\Delta \bar{x}))$. 
For this purpose, for each element $\tau^j_{\bar{x}_+}(\bar{x}_+)\in S_{\delta}(\bar{x}_+)$ with $\bar{x}_+:=\bar{x}+\Delta \bar{x}$, we search $S_{\delta}(\bar{x})=\{\tau_{\bar{x}}^1(\bar{x}),\tau_{\bar{x}}^2(\bar{x}),\ldots,\tau_{\bar{x}}^{p(\bar{x})}(\bar{x})\}$
for an index $\tilde{i}\in \{1,2,\ldots,p(\bar{x})\}$ such that 
$\|\tau^j_{\bar{x}_+}(\bar{x}_+)-\tau_{\bar{x}}^{\tilde{i}}(\bar{x})-\nabla \tau_{\bar{x}}^{\tilde{i}}(\bar{x})^{\top}\Delta \bar{x}\|
(\approx\|\tau^j_{\bar{x}_+}(\bar{x}_+)-\tau_{\bar{x}}^{\tilde{i}}(\bar{x}_+)
\|)
\le 10^{-1}$. 
If it is found, we regard ${\tau}_{\bar{x}_+}^j(\bar{x}_+)$ as $\tau_{\bar{x}}^{\tilde{i}}(\bar{x}_+)$.
Otherwise, we treat ${\tau}_{\bar{x}_+}^j(\cdot)$ as the implicit function that newly appears at $\bar{x}_+$, and set zero to be the Lagrange multiplier for the inequality constraint $g(x,\tau^j_{\bar{x}_+}(x))\le 0$.

Next, we explain how each step of the algorithm is implemented.
In Step~0, we set
$$\gamma_1 =\sqrt{\frac{m(m+1)}{2}},\ \gamma_2=5,\ c = \frac{1}{2.99},\ \alpha = 0.99,\ \beta = 0.8.$$
As for starting points, we set $y^0 = (1,1)^{\top}, V_0=m I$, and $\mu_0=1$, while $x^0$ is chosen so that $X^0 = m^{-1}I$ for linear SISDPs, and $x^0 =0$ is chosen for SISDPs with a nonlinear objective function.
In Step~1, we terminate the algorithm if 
$\mu_{k+1}<10^{-10}$ or  the value of 
the function $R_0$ is less than $10^{-8}$, where $R_0$ is 
the function $R_{\mu}$ with $\mu =0$ defined in Section\,\ref{sec:rmu}.
%
In Step~2.4, we implement the interior-point SQP-type method proposed in \cite{okuno2018sc} by using
the implementation details described therein. 
In Step~3, for the sake of numerical stability, we set $\varepsilon_{k+1}:=\max(10^{-7},\gamma_1\mu_{k+1}^{1+\alpha})$.
For $X\in S^m_{++}$ and $Y\in S^m$, we compute $\mathcal{L}_X^{-1}Y$ by solving the linear equation $\mathcal{L}_XZ=Y$
{for} $Z\in S^m$ with the Matlab built-in solver \texttt{lyap2}. We moreover use \texttt{quadprog} {to solve} quadratic programs in Step~2-1.

For the sake of comparison, we also implement a discretization method that solves finitely relaxed SISDPs sequentially until an approximate feasible solution is obtained. More precisely, 
for solving the SISDP\,\eqref{lsisdp}, we use the following discretization algorithm:
\begin{description}
\item[Step~0:]  Choose an initial index set $T_0\subseteq T$ with
$|T_0|<+\infty$. Choose $\theta>0$. Set $r:=0$. 
\item[Step~1:] Get a KKT point $x^r$ of the finitely relaxed SISDP with $T$ replaced by $T_r$.
\item[Step~2:] Find $\bar{\tau} \in T$
such that $g(x^r,\bar{\tau}) > \theta$ and set $T_{r + 1}:=T_r\cup \{\bar{\tau}\}$.
If such a point does not exist in $T$, terminate the algorithm.  
\item[Step~3:] Increment $r$ by one and return to Step~1. 
\end{description}
In Step~0, we choose $T_0=\{T_{\rm min},T_{\rm max}\}$. In Step~2, 
{to find such a $\bar{\tau}\in T$ we solve $\max_{\tau\in T}g(x^r,\tau)$ by applying Newton's method with a starting point $s\in {\rm argmax}\{
g(x^k,s)\mid s = s_1,s_2,\ldots,s_{N+1}\}$, where $\{s_1,s_2,\ldots,s_{N+1}\}$ is the set of grids defined earlier in this section. 
\footnote{
There is no theoretical guarantee for global optimality of $\tau$ thus found.
In practice, however, we may expect to have a global optimum by setting $N$ large enough.}
We set $\theta:=10^{-6}$.
\subsection{Linear SISDPs}
In this section, we consider the linear SISDP\,\eqref{lsisdp}, called {LSISDP for short}. 
{Specifically, we} solve the following problem taken from \cite[Section~4.2]{li2004solution}:
\begin{align}
\begin{array}{rcl}
\displaystyle{\mathop{\rm Maximize}_{X\in S^m}}& &A_0\bullet X\\
\mbox{subject to}& & A(\tau)\bullet X\ge 0\ (\tau\in T)\\
            & & I\bullet X = 1\\ 
            & & X\in S^m_+,
\end{array}\label{eig_semi}
\end{align}
where $A_0\in S^m$ and $A:T\to S^m$ is a symmetric matrix valued function 
whose elements are $q$-th order polynomials in $\tau$, i.e., $(A(\tau))_{i,j}=\sum_{l=0}^qa_{i,j,l}\tau^l$ for $1\le i,j\le m$.

In this experiment, 
we deal with the cases where $q = 9$, $m = 10, 20$, and $T=[0,1]$, i.e., $T_{\rm min}=0$ and $T_{\rm max}=1$.
We generate 10 test problems for each of $m=10,20$ as follows:
We choose all entries of $A_0$ and the coefficients $a_{i,j,l}$ in $A(\tau)$ from the interval $[-1,1]$ randomly. 
Among those generated data sets, we use only data such that the semi-infinite constraint
includes at least one active constraint
at an optimum of \eqref{eig_semi}.
{Specifically, for each generated data, we} compute an optimum, say $\tilde{X}$, of the SDP obtained by removing the semi-infinite constraints.
If $\min_{1\le i\le 21}A\left(T_{\rm min} + \frac{(i-1)(T_{\rm max}-T_{\rm min})}{20}\right)\bullet \tilde{X}\le -10^{-3}$, which implies that $\tilde{X}$ does not satisfy the semi-infinite constraints, we adopt it as a valid data set.

We examine the performance of Algorithm~2
by comparing it
with the discretization method that uses SDPT3\,\cite{sdpt3} 
with the default setting to solve linear SDPs sequentially. 
The obtained results are shown in Tables~\ref{ta1} and \ref{ta2}, in which 
``ave.time(s)'' and ``$\Phi_0^{\ast}$'' stand for
the average running time in seconds and the average value of $\Phi_0$ at the solution output by the algorithm
``Disc." stands for the discretization method.
Moreover, ``AHO-like'', ``NT'', and ``H.K.M'' {stand for} Algorithm~2 combined with the scaling matrices $P=I,F(x^k)^{-\frac{1}{2}}$, and
$W^{-\frac{1}{2}}$, respectively. 

From the tables, we observe that 
computational time for ``AHO-like'' is largest among all. 
Actually, it spends around 3 seconds for $m = 10$ and 40 seconds for $m = 20$, while the others spend less than 1 second in all cases.  This is mainly due to
high computational costs for calculating the matrix $H_P$ defined by \eqref{eq:HP}, in which $\mathcal{L}_{F(x)}^{-1}$ must be dealt with. However, in the cases of ``NT'' and ``H.K.M'', $H_P$ can be handed more efficiently. 
Second, we observe that ``Disc.'' solves problems faster than Algorithm~2. 
This is  because an
SDP is solved very quickly with SDPT3 at each iteration of ``Disc.", and the number of SDPs solved is very small. In fact, only three or four SDPs are solved on average per run. However, we can see that our methods gain KKT points with higher accuracy than the discretization method.
More specifically, the values of $\Phi_0^{\ast}$ for
Algorithm~2 lie between $1.0\times 10^{-9}$ and $2.0\times 10^{-9}$, while 
those for the discretization method are around $10^{-6}$.
We also observed that 
Algorithm~2 skips Step~2.4 in most iterations, namely, $w^{k+1}$ is determined by the directions $\Delta_{\fr}w^{k+1}$ and $\Delta_{1}w^{k+1}$. Actually, Step~2.4 was skipped in more than 90\% of iterations.
Skipping Step~2.4 is desirable since the interior point SQP method performed in Step~2.4 is likely to solve multiple QPs and 
result in more computational cost than Steps~2.1 and 2.2.
Also, in most cases, the full step was accepted eventually and the value of $\Phi_{\mu_{k-1}}$ converged to 0 superlinearly.   
\begin{table}[h]
\centering
\small
\begin{minipage}{0.43\hsize}
\begin{tabular}{|c|c|c|}\hline
                  & ave.time(s) &  $\Phi_0^{\ast}$      \\     \hline\hline
AHO-like     &   2.63     & $1.39\cdot 10^{-9}$                          \\ \hline
NT              & 0.44& $1.39\cdot 10^{-9}$                       \\ \hline
H.K.M.         &   0.45     &  $1.39\cdot 10^{-9}$                     \\ \hline\hline
Disc.          &   0.54      &  $2.06\cdot 10^{-6}$           \\ \hline
\end{tabular}
\caption{Results for linear SISDPs with $m=10$}
\label{ta1}
\end{minipage}\hfill
\begin{minipage}{0.43\hsize}
\begin{tabular}{|c|c|c|}\hline
                  & ave.time(s)&   $\Phi_0^{\ast}$                       \\     \hline\hline
AHO-like     &   46.3      & $1.97\cdot10^{-9}$                \\ \hline 
NT              &     0.90      & $1.97\cdot10^{-9}$    \\ \hline
H.K.M.         &      0.90           &   $1.97\cdot10^{-9}$                      \\ \hline\hline
Disc.          &       0.40           &   $1.34\cdot 10^{-6}$                                 \\ \hline
\end{tabular}
\caption{Results for linear SISDPs with $m=20$}
\label{ta2}
\end{minipage}
\end{table}

\subsection{Nonlinear SISDPs}
Next, we solve the following SISDP whose objective function is nonlinear: 
\begin{align}
\begin{array}{rcl}
\displaystyle{\mathop{\rm Minimize}_{x\in \R^{\frac{m(m+1)}{2}}}}& &\frac{1}{2}x^{\top}Mx+c^{\top}x+\omega{\|x\|^4}\\
\mbox{subject to}& & 
                     \sum_{i=1}^{n}\tau^{i-1}x_i\le \sum_{i=1}^n\tau^{2i} + \sin(9\pi \tau)+2  \ \ (\tau\in T)\\
                         & & X + \kappa I\in S^m_+
\end{array}\label{eig_semi2}
\end{align}
with $\omega>0$, $\kappa >0$, and $n:=m(m+1)/2$.
The objective function is 
not convex in general but coercive in the sense that 
$f(x)\to \infty$ as $\|x\|\to \infty$, and thus the considered problem is guaranteed to have at least one global optimum.
We deal with the cases of $m = 10,20$.
{For each of $m=10,20$,} all the elements of $M\in S^m$ and $c\in \R^n$ are randomly generated from the interval $[-1,1]$.
We set $T=[0,1]$ and $\kappa = \omega=0.01$. In Step~2 of the discretization method, we use
the primal-dual interior point method
\cite{yabe} to solve finitely relaxed SISDPs.

We show the results in Tables~\ref{ta3} and \ref{ta4}, 
where each column and row has the same meaning as in Tables~\ref{ta1} and \ref{ta2}. 
From the tables, ``AHO-like'' spends the largest CPU-time like in linear SISDPs.
We observe that Algorithm~2 (AHO-like, NT, H.K.M.) successfully obtains KKT points with higher accuracy than the discretization method. Actually, the values of $\Phi_0^{\ast}$
obtained by Algorithm~2 lie between $10^{-9}$ and $2\times 10^{-9}$, while those for the discretization method are around $10^{-6}$.
Compared with the case of linear SISDPs, we observed that the rate of skipping Step~2-4 was less. 
Actually, Step~2-4 was used at about 15\% of iterations when $m=10$
and about 24\% when $m=20$,
while it was used only in a few early iterations for linear SISDPs.
This might be caused by the nonlinearity of the objective function.
\begin{table}[h]
\centering
\small
\begin{minipage}{0.43\hsize}
\begin{tabular}{|c|c|c|}\hline
                  & ave.time(s) &  $\Phi_0^{\ast}$                       \\     \hline\hline
AHO-like     &  3.16        & $1.39\cdot 10^{-9}$               \\ \hline
NT              &   0.86       & $1.39\cdot 10^{-9}$               \\ \hline
H.K.M.         &   0.85        &  $1.39\cdot 10^{-9}$           \\ \hline\hline
Disc.          &    1.27        &  $9.62\cdot 10^{-7}$                \\ \hline
\end{tabular}
\caption{Results for the nonlinear SISDP with $m=10$}
\label{ta3}
\end{minipage}\hfill
\begin{minipage}{0.43\hsize}
\begin{tabular}{|c|c|c|}\hline
                  & ave.time(s)&   $\Phi_0^{\ast}$                        \\     \hline\hline
AHO-like     &   50.3             & $1.97\cdot10^{-9}$              \\ \hline 
NT              &     4.06           & $2.32\cdot10^{-9}$  \\ \hline
H.K.M.         &      4.00           &   $2.32\cdot10^{-9}$                       \\ \hline\hline
Disc.          &       8.08           &   $8.06\cdot 10^{-7}$                                  \\ \hline
\end{tabular}
\caption{Results for the nonlinear SISDP with $m=20$}
\label{ta4}
\end{minipage}
\end{table}

\section{Conclusion}
In this paper, we proposed 
two algorithms for solving the SISDP\,\eqref{lsisdp}: The first one (Algorithm~1) is a primal-dual path following method
designed to find a KKT point of the SISDP by following a path {formed by} BKKT points.
We showed that a sequence generated by the algorithm $\mbox{weakly}^{\ast}$ converges to a KKT point under some mild assumptions. To accelerate local convergence speed,  
the second algorithm (Algorithm~2) integrates
a two-step SQP method into Algorithm~1.
Algorithm~2 solves a sequence of quadratic programs and Newton equations obtained by the local reduction method and Monteiro-Zhang scaling technique, while decreasing the value of the barrier parameter. 

We established two-step superlinear convergence of Algorithm~2 
for the particular case where the AHO-like directions is used. 
As for the cases of the NT and H.K.M directions, we can show a two-step superlinear convergence in a manner analogous to  
\cite[Theorems~3,4]{yamashita2012local}.
Finally, we conducted some numerical experiments to investigate the efficiency of Algorithm~2 by comparing it with the discretization method which solves (nonlinear) SDPs obtained by finite relaxation of the SISDP\,\eqref{lsisdp}. 
In the experiments, we confirmed that the sequences generated by Algorithm~2 actually converged to a KKT point two-step superlinearly.
We also observed that it exhibited the numerical efficiency comparable to the discretization method.
In particular, it worked better in finding highly accurate solutions than the discretization method.

\section*{Appendix}
\def\thesection{A}
In the appendix, we prove Proposions\,\ref{prop:0511}, \ref{lem:1203-2}, and Lemma\,\ref{prop:0604-1}.
We begin with
giving some lemmas that help to show Proposion\,\ref{prop:0511}.
\begin{lemma}\label{lem:0424}
Let $X\in S^m_+$, $Y\in S^m$ and $\mu\ge 0$.
Then, 
\begin{enumerate}
\item 
$
\|XY-YX\|_F\le 2\|X\circ Y-\mu I\|_F
$ and
\item 
$
\|\mathcal{L}_X\mathcal{L}_Y-\mathcal{L}_Y\mathcal{L}_X\|_2\le \|X\circ Y-\mu I\|_F.
$
\end{enumerate}
\end{lemma}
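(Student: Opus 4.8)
The plan is to prove the two bounds in turn, deriving item~2 from item~1.

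For item~1, I would reduce to a coordinatewise estimate via a spectral decomposition of $X$. Since $X\in S^m_+$, write $X=Q\Lambda Q^\top$ with $Q$ orthogonal and $\Lambda=\mathrm{diag}(\lambda_1,\ldots,\lambda_m)$, $\lambda_i\ge 0$. Because the Frobenius norm is invariant under orthogonal conjugation and $Q^\top(\mu I)Q=\mu I$, it suffices to prove the inequality with $X$ replaced by $\Lambda$ and $Y$ by $\tilde Y:=Q^\top Y Q$. Entrywise one has $(\Lambda\tilde Y-\tilde Y\Lambda)_{ij}=(\lambda_i-\lambda_j)\tilde Y_{ij}$ and $(\Lambda\circ\tilde Y-\mu I)_{ij}=\tfrac{\lambda_i+\lambda_j}{2}\tilde Y_{ij}-\mu\delta_{ij}$, so off the diagonal the second matrix has entries $\tfrac{\lambda_i+\lambda_j}{2}\tilde Y_{ij}$, while the diagonal of the first matrix vanishes. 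The elementary fact $|\lambda_i-\lambda_j|\le \lambda_i+\lambda_j$, valid precisely because $\lambda_i,\lambda_j\ge 0$, then gives $(\lambda_i-\lambda_j)^2\tilde Y_{ij}^2\le 4(\Lambda\circ\tilde Y-\mu I)_{ij}^2$ for $i\ne j$; summing the squares over all $(i,j)$ yields $\|\Lambda\tilde Y-\tilde Y\Lambda\|_F\le 2\|\Lambda\circ\tilde Y-\mu I\|_F$, which is item~1. This is where the hypothesis $X\succeq O$ is used in an essential way.

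For item~2, I would first compute the commutator of the Jordan multiplication operators explicitly. Using $\mathcal{L}_X(Z)=(XZ+ZX)/2$, a direct expansion of $\mathcal{L}_X\mathcal{L}_Y(Z)$ and $\mathcal{L}_Y\mathcal{L}_X(Z)$ shows that all the ``mixed'' terms $XZY$ and $YZX$ cancel, leaving
\begin{equation*}
(\mathcal{L}_X\mathcal{L}_Y-\mathcal{L}_Y\mathcal{L}_X)(Z)=\tfrac14\big((XY-YX)Z-Z(XY-YX)\big)
\end{equation*}
for every $Z\in S^m$. Writing $C:=XY-YX$, it then remains to bound $\tfrac14\|CZ-ZC\|_F$ over $\|Z\|_F=1$. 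By the triangle inequality together with $\|CZ\|_F\le\|C\|_2\|Z\|_F\le\|C\|_F\|Z\|_F$ (and the analogous bound for $\|ZC\|_F$), one gets $\|CZ-ZC\|_F\le 2\|C\|_F\|Z\|_F$, hence $\|\mathcal{L}_X\mathcal{L}_Y-\mathcal{L}_Y\mathcal{L}_X\|_2\le\tfrac12\|XY-YX\|_F$. Combining this with item~1 gives the claimed bound $\|\mathcal{L}_X\mathcal{L}_Y-\mathcal{L}_Y\mathcal{L}_X\|_2\le\|X\circ Y-\mu I\|_F$.

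Neither step presents a genuine obstacle; the argument is elementary linear algebra. The only points demanding a bit of care are (i) setting up the reduction in item~1 so that the perturbation term $\mu I$ is preserved under conjugation and the diagonal is treated separately, and (ii) carrying out the sixteen-term expansion in item~2 correctly — I would record only the cancellation pattern rather than write out every term.
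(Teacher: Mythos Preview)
Your proof is correct and follows essentially the same approach as the paper: diagonalize $X$ and compare entries for item~1, then express the operator commutator as $\tfrac14\big((XY-YX)Z-Z(XY-YX)\big)$ and bound it by $\tfrac12\|XY-YX\|_F$ for item~2. The only cosmetic difference is that the paper writes out the chain $\|XY-YX\|_F^2\le\sum_{i\ne j}(d_i+d_j)^2\tilde y_{ij}^2\le\|D\tilde Y+\tilde YD-2\mu I\|_F^2$ explicitly rather than phrasing it as an entrywise bound.
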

\begin{proof}
Using some orthogonal matrix $\mathcal{O}\in \R^{m\times m}$, we make an eigenvalue decomposition of $X$: $\mathcal{O}^{\top}X\mathcal{O}=D$ with $D\in \R^{m\times m}$ being a diagonal matrix. Denote the $i$-th diagonal entry of $D$
by $d_i\ge 0$ for $i=1,2,\ldots,m$. 
Let $\tilde{Y}:=\mathcal{O}^{\top}Y\mathcal{O}$ with the $(i,j)$-th entry $\tilde{y}_{ij}$ for $1\le i,j\le m$.
\begin{enumerate}
\item We have the desired result from 
\begin{align}
\|XY-YX\|_F^2&=\|\mathcal{O}^{\top}X\mathcal{O}\mathcal{O}^{\top}Y\mathcal{O}-\mathcal{O}^{\top}Y\mathcal{O}\mathcal{O}^{\top}X\mathcal{O}\|_F^2\notag\\
              &=\|D\tilde{Y}-\tilde{Y}D\|_F^2\notag\\
              &=\sum_{1\le i,j\le m}(d_{i}-d_{j})^2\tilde{y}_{ij}^2\notag\\
              &\le \sum_{1\le i\neq j\le m}(d_{i}+d_{j})^2\tilde{y}_{ij}^2\notag\\
              &\le\sum_{1\le i\neq j\le m}(d_{i}+d_{j})^2\tilde{y}_{ij}^2+\sum_{i=1}^m(2d_{i}\tilde{y}_{ii}-2\mu)^2\notag\\
              &=\|D\tilde{Y}+\tilde{Y}D-2\mu I\|_F^2\notag\\
              &=\|XY+YX-2\mu I\|_F^2\notag \\
              &=4\|X\circ Y-\mu I\|_F^2,\notag
\end{align}
where the first inequality follows from $d_{i}\ge 0$ for $i=1,2,\ldots,m$.
\item
{By direct calculation, we have}
\begin{align}
\|\mathcal{L}_X\mathcal{L}_Y-\mathcal{L}_Y\mathcal{L}_X\|_2
&=\max_{\|Z\|_F=1}\|\mathcal{L}_X\mathcal{L}_YZ-\mathcal{L}_Y\mathcal{L}_XZ\|_F\notag \\
&=\max_{\|Z\|_F=1}\frac{\|(XY-YX)Z-Z(XY-YX)\|_F}{4}\notag \\
&\le  \frac{\|XY-YX\|_F}{2}\notag\\
&\le \|X\circ Y-\mu I\|_F,\notag                                                                               
\end{align}
where the {second} inequality follows from item~1.
\end{enumerate}
\hspace{\fill}$\square$
\end{proof}

\begin{lemma}\label{lem:0424-2}
{Let $(X_{\ast},Y_{\ast})\in S^m_+\times S^m_+$ satisfy the strict complementarity condition that $X_{\ast}{\circ}Y_{\ast}=O$ and $X_{\ast}+Y_{\ast}\in S^m_{++}$. 
Let $\{\mu_r\}\subseteq \R_{++}$ and $\{(X_r,Y_r)\}\subseteq S^m_{++}\times S^m_{++}$ be sequences  such that $\lim_{r\to\infty}\mu_r=0$ and $\lim_{r\to\infty}(X_r,Y_r)=(X_{\ast},Y_{\ast})$.}
Let spectral decompositions of $X_{\ast}$ and $Y_{\ast}$ be 
$$
\mathcal{O}_{\ast}^{\top}X_{\ast}\mathcal{O}_{\ast}=\begin{pmatrix}
D_{X_{\ast}}&O\\
O&O
\end{pmatrix},\  
\mathcal{O}_{\ast}^{\top}Y_{\ast}\mathcal{O}_{\ast}
=
\begin{pmatrix}
O&O\\
O&D_{Y_{\ast}}
\end{pmatrix}
$$
using some orthogonal matrix $\mathcal{O}_{\ast}\in \R^{m\times m}$
and positive diagonal matrices $D_{X_{\ast}}\in S^p_{++}$ and $D_{Y_{\ast}}\in S^q_{++}$ with $p+q=m$.
Furthermore, suppose $p,q>0$ and 
choose a sequence of orthogonal matrices $\{\mathcal{O}_{r}\}\subseteq  \R^{m\times m}$ such that
\begin{equation*}
\mathcal{O}_r^{\top}X_r\mathcal{O}_r=\begin{pmatrix}
D_{X_{r}}&O\\
O&E_{X_{r}}
\end{pmatrix},\ \lim_{r\to\infty}\mathcal{O}_r=\mathcal{O}_{\ast}
\end{equation*}
with $D_{X_r}\in \R^{p\times p}$ and $E_{X_r}\in \R^{q\times q}$ being positive diagonal matrices for $r\ge 1$. 
(Notice that $\lim_{r\to \infty}E_{X_r}$=O.)
If $\|X_r\circ Y_r-\mu_r I\|=o(\mu_r)$, then 
\begin{equation}
\lim_{r\to\infty}\frac{1}{\mu_r}E_{X_r}=D_{Y_{\ast}}^{-1}.\label{eq:0424-1}
\end{equation} 
\end{lemma}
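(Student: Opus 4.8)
The plan is to pass to the orthonormal basis that diagonalizes $X_r$ and to extract the lower-right $q\times q$ block of the conjugated complementarity residual. Set $\tilde Y_r := \mathcal{O}_r^{\top}Y_r\mathcal{O}_r$ and partition it conformally with the block-diagonal matrix $\mathcal{O}_r^{\top}X_r\mathcal{O}_r=\mathrm{diag}(D_{X_r},E_{X_r})$, writing
\[
\tilde Y_r=\begin{pmatrix}A_r & B_r\\ B_r^{\top} & C_r\end{pmatrix},\qquad A_r\in S^p,\ C_r\in S^q .
\]
Since $\mathcal{O}_r\to\mathcal{O}_{\ast}$ and $Y_r\to Y_{\ast}$, we have $\tilde Y_r\to\mathcal{O}_{\ast}^{\top}Y_{\ast}\mathcal{O}_{\ast}=\mathrm{diag}(O,D_{Y_{\ast}})$; in particular $C_r\to D_{Y_{\ast}}$, while $E_{X_r}\to O$ as already noted in the statement.

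First I would compute the $(2,2)$ block of $\mathcal{O}_r^{\top}(X_r\circ Y_r)\mathcal{O}_r=(\mathcal{O}_r^{\top}X_r\mathcal{O}_r)\circ\tilde Y_r$. Because $\mathcal{O}_r^{\top}X_r\mathcal{O}_r$ is block diagonal, a direct block multiplication shows that this $(2,2)$ block equals $\tfrac{1}{2}(E_{X_r}C_r+C_rE_{X_r})=E_{X_r}\circ C_r$. Since conjugation by an orthogonal matrix preserves the Frobenius norm, $\|\mathcal{O}_r^{\top}(X_r\circ Y_r)\mathcal{O}_r-\mu_rI\|_F=\|X_r\circ Y_r-\mu_rI\|_F=o(\mu_r)$, and restricting to the $(2,2)$ block yields
\[
E_{X_r}\circ C_r=\mu_rI_q+R_r,\qquad \|R_r\|_F=o(\mu_r).
\]

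Next I would read this identity entrywise. Writing $E_{X_r}=\mathrm{diag}(e^r_1,\dots,e^r_q)$ with $e^r_i>0$ and $C_r=(c^r_{ij})$, one has $(E_{X_r}\circ C_r)_{ij}=\tfrac{1}{2}(e^r_i+e^r_j)c^r_{ij}$, so the diagonal entries of the displayed identity read $e^r_ic^r_{ii}=\mu_r+o(\mu_r)$. Since $c^r_{ii}\to d_i$, the $i$-th diagonal entry of $D_{Y_{\ast}}$, which is strictly positive by strict complementarity together with $q>0$, we have $c^r_{ii}>0$ for all large $r$, and hence $e^r_i/\mu_r=(1+o(1))/c^r_{ii}\to 1/d_i$. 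Consequently $\mu_r^{-1}E_{X_r}\to\mathrm{diag}(1/d_1,\dots,1/d_q)=D_{Y_{\ast}}^{-1}$, which is \eqref{eq:0424-1}.

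There is no serious obstacle here; the only points requiring care are that $C_r$ need not itself be diagonal, so that one should use \emph{only} the diagonal entries of the residual identity, and that the division by $c^r_{ii}$ for large $r$ is justified precisely by the strict positivity of $D_{Y_{\ast}}$. Equivalently, one can argue without coordinates: $\mathcal{L}_{C_r}\to\mathcal{L}_{D_{Y_{\ast}}}$, and the latter operator is invertible because its eigenvalues are $\tfrac{1}{2}(\lambda_i+\lambda_j)>0$ over the positive eigenvalues $\lambda_i$ of $D_{Y_{\ast}}$, so that $\mu_r^{-1}E_{X_r}=\mathcal{L}_{C_r}^{-1}(I_q+o(1))\to\mathcal{L}_{D_{Y_{\ast}}}^{-1}(I_q)=D_{Y_{\ast}}^{-1}$.
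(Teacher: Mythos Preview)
Your proof is correct and follows essentially the same approach as the paper's: both conjugate by $\mathcal{O}_r$, extract the diagonal entries of the lower-right $q\times q$ block of $(\mathcal{O}_r^{\top}X_r\mathcal{O}_r)\circ(\mathcal{O}_r^{\top}Y_r\mathcal{O}_r)$ to obtain $e^r_i c^r_{ii}=\mu_r+o(\mu_r)$, and then divide through using $c^r_{ii}\to d_i>0$. Your write-up is somewhat more explicit about the block computation and adds a coordinate-free reformulation via $\mathcal{L}_{C_r}^{-1}$, but the underlying argument is the same.
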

\begin{proof}
Let $\tilde{Y}_r:=\mathcal{O}_r^{\top}Y_r\mathcal{O}_r$ and 
$\tilde{y}^r_{ii}$ and 
$e^r_i$ be the $i$-th diagonal entry of $\tilde{Y}_r$ and $E_{X_r}$, respectively for any $i=p+1,p+2,\ldots,m$.
Since $\|X_r\circ Y_r-\mu_rI\|_F=o(\mu_r)$ and 
\begin{align*}
\|X_r\circ Y_r-\mu_{r}I\|_F&=\left\|\begin{pmatrix}
D_{X_{r}}&O\\
O&E_{X_{r}}
\end{pmatrix}\circ\tilde{Y}_r-\mu_r I\right\|_F\notag \\
&\ge \sqrt{
\sum_{i=p+1}^{m}(e^r_{i}\tilde{y}_{ii}^r-\mu_r)^2},                                   
\end{align*}
we have
\begin{equation}
0=\lim_{r\to \infty}\frac{\sqrt{\sum_{i=p+1}^{m}\left(e^r_i\tilde{y}^r_{ii}-\mu_r\right)^2}}{\mu_r}
=\lim_{r\to \infty}\sqrt{\sum_{i=p+1}^{m}\left(\frac{e^r_i}{\mu_r}\tilde{y}^r_{ii}-1\right)^2},
\notag
\end{equation}
which yields $\lim_{r\to\infty}\frac{e^r_i}{\mu_r}\tilde{y}^r_{ii}=1$ for any $i=p+1,\ldots,m$.
Notice that, 
for $i\ge p+1$,
$\{\tilde{y}^r_{ii}\}$ converges to
the $i$-th positive diagonal entry of $D_{Y_{\ast}}$.
In view of these facts, we obtain \eqref{eq:0424-1}. 
\hspace{\fill}$\square$
\end{proof}
\subsubsection*{Proof of Proposition\,\ref{prop:0511}}
For the case where $X_{\ast}\in S^m_{++}$, it is easy to prove the desired result.
So, we consider the case of $X_{\ast}\in S^m_+\setminus S^m_{++}$.
Let $\lambda_r>0$ be the smallest eigenvalue of $X_r$. 
Notice that $\lambda_r\to 0\ (r\to \infty)$ and, by Lemma~\ref{lem:0424-2}, $\lim_{r\to\infty}\frac{\lambda_r}{\mu_r}$ exists and is positive.
Thus, we also have 
\begin{equation}
\lim_{r\to\infty}\frac{\mu_r}{\lambda_r}>0.\label{eq:0604}
\end{equation} 
Note that, for any $X\in S^m$ having $m$ eigenvalues $\alpha_1\le \alpha_2\le\cdots \le \alpha_m$,
the corresponding symmetric linear operator $\mathcal{L}_X$ has $m(m+1)/2$ eigenvalues
$
\alpha_1,\alpha_2,\ldots,\alpha_m,\{(\alpha_i+\alpha_j)/2\}_{i\neq j}.
$ 
This fact yields that the maximum eigenvalue of the operator $\mathcal{L}_{X_r}^{-1}$ is $\lambda_r^{-1}$. 
Therefore, we have $\|\mathcal{L}_{X_r}^{-1}\|_2=\lambda_r^{-1}$ for any $r\ge 0$.
It then follows that 
\begin{align}
\|
\mathcal{L}_{X_r}\mathcal{L}_{Y_r}\mathcal{L}_{X_r}^{-1}-\mathcal{L}_{Y_r}
\|_2
&\le
\|\mathcal{L}_{Y_r}\mathcal{L}_{X_r}-\mathcal{L}_{X_r}\mathcal{L}_{Y_r}
\|_2\|\mathcal{L}_{X_r}^{-1}\|_2\notag \\
&\le 
\mu_r\|\mathcal{L}_{X_r}^{-1}\|_2
\frac{\|
X_r\circ Y_r-\mu_rI
\|_F}{\mu_r}   \notag\\
&=\frac{\mu_r}{\lambda_r}\frac{\|
X_r\circ Y_r-\mu_rI
\|_F}{\mu_r},
\end{align}
where the second inequality follows from Lemma\,\ref{lem:0424}.
This relation together with \eqref{eq:0604} and $\|X_r\circ Y_r-\mu_rI\|_F=O(\mu_r^{1+{\zeta}})$
implies 
$
\|
\mathcal{L}_{X_r}\mathcal{L}_{Y_r}\mathcal{L}_{X_r}^{-1}-\mathcal{L}_{Y_r}
\|_2=O(\mu_r^{\zeta}).$
\\
\hspace{\fill}$\square$
\subsubsection*{Proof of Proposition\,\ref{lem:1203-2}}
Define $\Phi_r(s):=\left(X_r+s\Delta X_r\right)\circ \left(Y_r+s\Delta Y_r\right)$ for $s\in [0,1]$ and each $r$. 
By using the fact that $\|X\|_F\ge |\lambda_{\min}(X)|$ for any $X\in S^m$, the conditions\,\eqref{al:1202-1}--\eqref{al:1202-3} yield that there exists some $\theta>0$ such that
\begin{align}
&\lambda_{\min}\left(\Delta X_r\circ \Delta Y_r\right)\ge -\theta\mu_r^2,\label{al:1202-4}\\
&\lambda_{\min}\left(X_r\circ Y_r\right)\ge \mu_r-\theta \mu_r^{1+\zeta},\label{al:1202-5} \\
&\lambda_{\min}\left(Z_r-\hmu_rI\right)\ge -\theta \hmu_r^{1+\hzeta}.\label{al:1202-6}
\end{align}
Then,  it holds that 
\begin{align}
\lambda_{\min}(\Phi_r(s))&=\lambda_{\min}\left(
X_r\circ Y_r+s X_r\circ \Delta Y_r+s Y_r\circ \Delta X_r+s^2\Delta X_r\circ \Delta Y_r
\right)\notag\\
           &=\lambda_{\min}\left(
(1-s)X_r\circ Y_r+s(Z_r-\hmu_r I)+s\hmu_r I+s^2\Delta X_r\circ \Delta Y_r\right)\notag\\
&\ge (1-s)\lambda_{\min}\left(X_r\circ Y_r\right)+s\lambda_{\min}(Z_r-\hmu_r I)\notag\\
&\hspace{5em}+s\lambda_{\min}(\hmu_r I)+s^2\lambda_{\min}\left(\Delta X_r\circ \Delta Y_r\right)\notag\\ 
            &\ge (1-s)\left(\mu_r-\theta \mu_r^{1+\zeta}\right) 
-s\theta\hmu_r^{1+\hzeta}+s\hmu_r-s^2 \theta\mu_r^2\notag\\
&=:\phi_r(s) \notag
\end{align}
for any $r$ sufficiently large and $s\in [0,1]$, where the first inequality follows from the fact that $\lambda_{\min}(A+B)\ge \lambda_{\min}(A)+\lambda_{\min}(B)$ for $A, B\in S^m$ and the second inequality is due to \eqref{al:1202-4}--\eqref{al:1202-6} and $s\in [0,1]$.
Notice that $\phi_r(s)$ is concave and quadratic.
Then, for any $r$ sufficiently large, we have
$\phi_r(s)>0\ (s\in [0,1])$ since $0<\zeta,\hzeta<1$, $\lim_{r\to \infty}(\mu_r,\hmu_r)=(0,0)$, and \eqref{al:1202-7} imply that 
$\phi_r(0)=\mu_r-\theta \mu_r^{1+\zeta}>0$ and 
$\phi_r(1)=\hmu_r-\theta\hmu_r^{1+\hzeta}-\theta\mu_r^2>0$ for sufficiently large $r$.
This means that $\lambda_{\min}(\Phi_r(s))\ge \phi_r(s)>0\ (s\in [0,1])$ and therefore 
\begin{equation}
\Phi_r(s)\in S^m_{++}\ (s\in [0,1]), \label{eq:Phi}
\end{equation}
from which we can derive $X_r+\Delta X_r\in S^m_{++}$ and $Y_r+\Delta Y_r\in S^m_{++}$.
Actually, for contradiction, suppose that either one of these two conditions is not true.
We can assume $X_r+\Delta X_r\notin S^m_{++}$ without loss of generality. 
Recall that $X_r\in S^m_{++}$.
Then, there exists some $\bar{s}\in (0,1]$ such that $X_r+\bar{s}\Delta X_r\in S^m_{+}\setminus S^m_{++}$.
Therefore, we can find some nonzero vector $d\in \R^n$ such that $(X_r+\bar{s}\Delta X_r)d=0$.
From this fact, we readily have
\begin{align}
d^{\top}\Phi_r(\bar{s})d&=\frac{
d^{\top}(X_r+\bar{s}\Delta X_r)(Y_r+\bar{s}\Delta Y_r)d+
d^{\top}(Y_r+\bar{s}\Delta Y_r)(X_r+\bar{s}\Delta X_r)d
}{2}=0,\notag
\end{align}
which contradicts \eqref{eq:Phi}. Hence, we conclude that $X_r+\Delta X_r\in S^m_{++}$ and $Y_r+\Delta Y_r\in S^m_{++}$ for all $r$ sufficiently large. The proof is complete. \hspace{\fill}$\square$
\subsubsection*{Proof of Lemma\,\ref{prop:0604-1}}
To begin with, by $w^k\in \mathcal{N}_{\mu_{k-1}}^{\epsilon_{k-1}}$ and $\epsilon_{k-1}=\gamma_1\mu_{k-1}^{1+\alpha}$, it follows that
\begin{align}
&\left\|\nabla f(x^k)+\sum_{i=1}^{p(x^{\ast})}\nabla \hat{g}_i(x^k)y_i^k-(F_i\bullet V_k)_{i=1}^n\right\|=o(\tbkpre),\ \|F(x^k)\circ V_k\|_F=\Theta(\tbkpre),\label{al:1012-1}\\
&\left|\sum_{i=1}^{p(x^{\ast})}y_i^k\hat{g}_i(x^k)\right|=o(\tbkpre),\ \max_{1\le i\le p(x^{\ast})}(\hat{g}_i(x^k))_+=o(\tbkpre)\label{al:1012}
\end{align}
together with $y_i^k\ge 0\ (i=1,2,\ldots,p(x^{\ast}))$.
Then, \eqref{al:1012} implies $|\hat{g}_i(x^k)|=o(\tbkpre)\ (i=1,2,\ldots,p(x^{\ast}))$, which together 
with \eqref{al:1012-1} and \eqref{al:1012} yields $\|\Phi_0(\tilde{w}^k)\|=\Theta(\tbkpre)$.
We then have $\tbkpre=\Theta(\|\Phi_{0}(\tilde{w}^k)\|)$.

We next prove $\tbkpre=\Theta(\|w^k-w^{\ast}\|)$. 
Notice that by Assumption~B-\ref{sc}, 
for sufficiently large $k$, $y^k_i>0\ (i\in I_a(x^{\ast}))$ and $y^k_i=0\ (i\in \{1,2,\ldots,p(x^{\ast})\}\setminus I_a(x^{\ast}))$, which together with $y_i^{\ast}=0\ (i\in \{1,2,\ldots,p(x^{\ast})\}\setminus I_a(x^{\ast}))$ implies $\|\tilde{w}^k-\tilde{w}^{\ast}\|=\|{w}^k-{w}^{\ast}\|$.
Thus, to show the desired result, we have only to prove $\|\Phi_{0}({w}^k)\|=\Theta(\|\tilde{w}^k-\tilde{w}^{\ast}\|)$.
In other words, it suffices to show that 
the sequence of positive numbers $\{\zeta_k\}$ is bounded above and away from zero, where $\zeta_k:={\|\Phi_{0}(\tilde{w}^k)\|}/{\|\tilde{w}^k-\tilde{w}^{\ast}\|}$. Note that 
\begin{equation*}
\zeta_k=\frac{\|\Phi_{0}(\tilde{w}^k)-\Phi_{0}(\tilde{w}^{\ast})\|}{{\|\tilde{w}^k-\tilde{w}^{\ast}\|}}=
\left\|\mathcal{J}\Phi_{0}(\tilde{w}^{\ast})\frac{\tilde{w}^k-\tilde{w}^{\ast}}{\|\tilde{w}^k-\tilde{w}^{\ast}\|}+\frac{O(\|\tilde{w}^k-\tilde{w}^{\ast}\|^2)}{\|\tilde{w}^k-\tilde{w}^{\ast}\|}\right\|.
\end{equation*}
Obviously, $\zeta_k$ is bounded from above. 
To show $\zeta_k$ is bounded away from zero, suppose to the contrary.
Then, without loss of generality, we can assume that 
$\lim_{k\to \infty }\zeta_k=0$, 
and hence there exists some $d^{\ast}$ with $\|d^{\ast}\|=1$ such that $\lim_{k\to\infty}\frac{\tilde{w}^k-\tilde{w}^{\ast}}{\|\tilde{w}^k-\tilde{w}^{\ast}\|}=d^{\ast}$ and $\mathcal{J}\Phi_0(\tilde{w}^{\ast})d^{\ast}=0$.
However, this contradicts
the nonsingularity of $\mathcal{J}\Phi_{0}(\tilde{w}^{\ast})$ from Assumption~C-\ref{nonsing}.
We have the desired conclusion.
\hspace{\fill}$\square$

\end{document}